\newif\ifsiam\siamfalse

\PassOptionsToPackage{usenames,dvipsnames,table}{xcolor}
\ifsiam
	\documentclass[review,onefignum,onetabnum]{siamart190516}
\else
	\documentclass[USenglish,10pt,a4paper,reqno]{article}
\fi

	\PassOptionsToPackage{capitalize,nameinlink}{cleveref}
	\PassOptionsToPackage{breakspaces}{clevethm}
	\PassOptionsToPackage{dvipsnames}{xcolor}

	\usepackage[cal=boondoxo]{mathalfa}
	\ifsiam
		\usepackage[
			noload={amsthm,txfonts,tikz,pgfplots,caption,subcaption},
			excludethm=all,
			customsettings=false,
		]{myPreamble}
	\else
		\usepackage[
			noqed,
			noload={tikz,pgfplots},
			eqreset=section,
			thmreset=section,
		]{myPreamble}
	\fi
	\frenchspacing

	\let\OLDand\and
	\def\and{\texorpdfstring{\OLDand}{, }}%
	\ifsiam
		\let\qedhere\relax
		\newcommand{\Sep}{,\ }
		\renewenvironment{qedequation*}{\[}{\]}
	\else
		\newenvironment{keywords}{\par\noindent{\bf Keywords. }}{}
		\newenvironment{AMS}{\par\noindent{\bf AMS subject classifications. }}{}
		\newcommand{\Sep}{ \(\cdot\)\ }
	\fi
	\pdfstringdefDisableCommands{%
		\def\thanks{}%
		\def\funding{}%
		\def\footnotemark{}%
	}
	\crefformat{equation}{(#2#1#3)}

	\ifsiam
		\Aligntrue
		\let\thedummythm\thetheorem

		\newsiamremark{remark}{Remark}
		\newsiamremark{example}{Example}

		\theoremstyle{plain}
		\theoremheaderfont{\normalfont\itshape}
		\theorembodyfont{\normalfont}
		\theoremseparator{.}
		\theoremsymbol{}
		\newtheorem{assumption}{Assumption}
		
		\Crefname{assumption}{Assumption}{Assumptions}

		\CreateTheoremLists{corollary}
		\CreateTheoremLists{definition}
		\CreateTheoremLists{example}
		\CreateTheoremLists{fact}
		\CreateTheoremLists{lemma}
		\CreateTheoremLists{proposition}
		\CreateTheoremLists{remark}
		\CreateTheoremLists{theorem}
		
		\makeatletter
			\let\oldproposition\proposition
			\let\oldendproposition\endproposition
			\renewenvironment{proposition}[1][]{%
				\def\@currentlabelname{#1}\ifstrempty{#1}{\oldproposition}{\oldproposition[#1]}%
			}{\oldendproposition}
			\let\oldlemma\lemma
			\let\oldendlemma\endlemma
			\renewenvironment{lemma}[1][]{%
				\def\@currentlabelname{#1}\ifstrempty{#1}{\oldlemma}{\oldlemma[#1]}%
			}{\oldendlemma}
			\let\oldtheorem\theorem
			\let\oldendtheorem\endtheorem
			\renewenvironment{theorem}[1][]{%
				\def\@currentlabelname{#1}\ifstrempty{#1}{\oldtheorem}{\oldtheorem[#1]}%
			}{\oldendtheorem}
		\makeatother

		\foreach \x/\y in {corollary/cor, definition/defin, example/es, fact/fact, lemma/lem, proposition/prop, remark/rem, theorem/thm} {
			\RegisterTheoremName{\x}{\y}
		}

		\setlist*[proofitemize,1]{itemindent=\parindent}
	\else
		\Alignfalse
		\newenvironment{assumption}{\begin{ass}}{\end{ass}}
		\newenvironment{corollary}{\begin{cor}}{\end{cor}}
		\newenvironment{definition}{\begin{defin}}{\end{defin}}
		\newenvironment{example}{\begin{es}}{\end{es}}
		\newenvironment{lemma}{\begin{lem}}{\end{lem}}
		
		\newenvironment{remark}{\begin{rem}}{\end{rem}}
		\newenvironment{theorem}{\begin{thm}}{\end{thm}}
	\fi


	\makeatletter
		\newcommand{\BiFRB}{%
			\hyperref[alg:BiFRB]{\(i^*\)FRB}%
			\@ifstar{\ (\cref{alg:BiFRB})}{}%
		}%
	\makeatother

	\let\closure\overline
	\newcommand{\myhat}[1]{\vphantom{#1}\smash{\hat{#1}}}
	\renewcommand{\nicefrac}[2]{#1/#2}
	\renewcommand{\j}{\mathcal j}

	\newcommand{\stepsize}{\gamma}
	\newcommand{\inertia}{\beta}
	\newcommand{\kernel}{h}

	\newcommand{\C}{\mathcal C} 
	\newcommand{\cost}{\varphi_{\closure C}}
	\newcommand{\envelope}{\phi}
	\newcommand{\lyapunov}{\mathcal L}
	\newcommand{\model}{\mathcal M}
	\newcommand{\operator}{\operatorname T}

	\newcommand{\tildeall}{%
		\let\oldenvelope\envelope
		\let\oldlyapunov\lyapunov
		\let\oldmodel\model
		\let\oldoperator\operator
		\let\oldstepsize\stepsize
		\let\oldinertia\inertia
		\renewcommand{\envelope}{\tilde{\oldenvelope}}%
		\renewcommand{\lyapunov}{\tilde{\oldlyapunov}}%
		\renewcommand{\model}{\tilde{\oldmodel}}%
		\renewcommand{\operator}{\tilde{\oldoperator}}%
		\renewcommand{\stepsize}{\tilde{\oldstepsize}}%
		\renewcommand{\inertia}{\tilde{\oldinertia}}%
	}%
	\newcommand{\h}{\hat{\kernel}}
	\newcommand{\f}{\myhat{f}_{\!\inertia}}

	\newcommand{\env}{\envelope_{\stepsize\!,\,\inertia}^{\kernel\text{-\sc frb}}}
	\newcommand{\LL}{\lyapunov_{\stepsize\!,\,\inertia}^{\kernel\text{-\sc frb}}}
	\newcommand{\M}{\model_{\stepsize\!,\,\inertia}^{\kernel\text{-\sc frb}}}
	\newcommand{\T}{\operator_{\stepsize\!,\,\inertia}^{\kernel\text{-\sc frb}}}
	\makeatletter
		\newcommand{\D}{\operatorname D\@D}
		\def\@D{\@ifnextchar_{\@D@subarg}{\@D@subarg_{\kernel}}}
		\def\@D@subarg_#1{_{#1}}
	\makeatother

\crefname{ALG@line}{step}{steps}

	\SetEnumitemKey{B}{%
		B*,
		topsep = 0pt,
		before = \removelastskip,
	}
	\SetEnumitemKey{B*}{%
		label={\it(B$_{\arabic*}$)},
		ref={\thedummythm{\it(B$_{\arabic*}$)}},
	}


\newcommand{\F}{\mathcal F_{\gamma,\beta}^{h\text{\sc-frb}}}
\newcommand{\vertiii}[1]{{
	\vert\kern-0.2ex\vert\kern-0.2ex\vert #1
    \vert\kern-0.2ex\vert\kern-0.2ex\vert
}}
\makeatletter
	\newcommand{\norm}{\@ifstar\@@norm\@norm}
	\newcommand{\@norm}[1]{\|{}#1{}\|}
	\newcommand{\@@norm}[1]{\vertiii{#1}}
\makeatother
\newcommand{\inte}{\ensuremath{\operatorname{int}}}

\newcommand{\TheKeywords}{%
   Nonsmooth nonconvex optimization\Sep
   forward-reflected-backward splitting\Sep
   inertia\Sep
   Bregman distance\Sep
   relative smoothness.%
}
\newcommand{\TheAMSsubj}{%
	90C26\Sep 
	49J52\Sep 
	49J53.
}
\newcommand{\TheFunding}{%
	This work was supported by NSERC Discovery Grants,
	and the JSPS KAKENHI grant number JP21K17710.%
}

\author{%
	Ziyuan Wang\thanks{%
		Department of Mathematics, Irving K. Barber Faculty of Science, University of British Columbia,
		Kelowna, B.C. V1V 1V7, Canada.%
		\texorpdfstring{\newline}{ }%
		{\it E-mails:}
		{\sf
			\href{mailto:shawn.wang@ubc.ca}{shawn.wang@ubc.ca},
			\href{mailto:ziyuan.wang@alumni.ubc.ca}{ziyuan.wang@alumni.ubc.ca}%
		}%
	}%
	\and
	Andreas Themelis\thanks{%
		Kyushu University,
		Faculty of Information Science and Electrical Engineering (ISEE),
		744 Motooka, Nishi-ku, 819-0395 Fukuoka, Japan.%
		\texorpdfstring{\newline}{ }%
		{\it E-mails:}
		{\sf
			\href{mailto:ou.honjia.069@s.kyushu-u.ac.jp}{ou.honjia.069@s.kyushu-u.ac.jp},
			\href{mailto:andreas.themelis@ees.kyushu-u.ac.jp}{andreas.themelis@ees.kyushu-u.ac.jp}%
		}%
	}%
	\and
	Hongjia Ou\footnotemark[3]%
	\and
	Xianfu Wang\footnotemark[2]%
}

\ifsiam
	\headers{A mirror inertial forward-reflected-backward splitting}{Z. Wang, A. Themelis, H. Ou, and X. Wang}
	\title{%
		A mirror inertial forward-reflected-backward splitting:
		Global convergence and linesearch extension beyond convexity and Lipschitz smoothness%
		\thanks{%
			Submitted to the editors \today.%
			\funding{\TheFunding}%
		}%
	}
\else
	\date{}
	\title{\Large%
		A mirror inertial forward-reflected-backward splitting:
		Global convergence and linesearch extension beyond convexity and Lipschitz smoothness%
		\thanks{\TheFunding}%
	}
\fi

\showgrayfalse
\begin{document}
	
	\maketitle
	\begin{abstract}
		This work investigates a Bregman and inertial extension of the forward-re\-flect\-ed-backward algorithm
[Y. Malitsky and M. Tam, \emph{SIAM J. Optim.}, 30 (2020), pp. 1451--1472]
applied to structured nonconvex minimization problems under relative smoothness.
To this end, the proposed algorithm hinges on two key features: taking inertial steps in the \emph{dual} space, and allowing for \emph{possibly negative} inertial values.
Our analysis begins with studying an associated envelope function that takes inertial terms into account through a novel product space formulation.
Such construction substantially differs from similar objects in the literature and could offer new insights for extensions of splitting algorithms.
Global convergence and rates are obtained by appealing to the generalized concave Kurdyka-\L ojasiewicz~(KL) property, which allows us to describe a sharp upper bound on the total length of iterates.
Finally, a linesearch extension is given to enhance the proposed method.

	\end{abstract}
	
	\begin{keywords}\TheKeywords \end{keywords}
	\begin{AMS}\TheAMSsubj \end{AMS}

	\section{Introduction}
		Consider the following composite minimization problem
\begin{equation*}
	\tag{P}\label{eq:P}
	\minimize_{x\in\R^n}\varphi(x)\coloneqq f(x)+g(x)
\quad
	\stt x\in\closure C,
\end{equation*}
where \(C\) is a nonempty open and convex set with closure \(\closure C\),  \(\func{f}{\R^n}{\Rinf\coloneqq\R\cup\set{\pm\infty}}\) is differentiable on $\inte\dom f\neq\emptyset$, and \(\func{g}{\R^n}{\Rinf}\) is proper and lower semicontinuous (lsc).
For notational brevity, we define \(\cost\coloneqq\varphi+\indicator_{\closure C}\) with \(\indicator_X\) denoting the indicator function of set \(X\subseteq\R^n\), namely such that \(\indicator_X(x)=0\) if \(x\in X\) and \(\infty\) otherwise.
By doing so, problem \eqref{eq:P} can equivalently be cast as the ``unconstrained'' minimization
\[
	\minimize_{x\in\R^n}\cost(x).
\]
Note that~\eqref{eq:P} is beyond the scope of traditional first-order methods that require global Lipschitz continuity of \(\nabla f\) and the consequential descent lemma \cite[Prop. A.24]{bertsekas2016nonlinear};
see, e.g, \cite{attouch2013convergence,parikh2014proximal,mairal2015incremental,li2015global,li2016douglas,li2017peaceman} for such algorithms.
To resolve this issue, Lipschitz-like convexity was introduced in the seminal work \cite{bauschke2017descent}, furnishing a descent lemma beyond the aforementioned setting.
This notion was then referred to as relative smoothness~(see \cref{def:relsmooth}) and has played a central role in extending splitting algorithm to the setting of~\eqref{eq:P}; see, e.g., \cite{bolte2018first,dragomir2021quartic,hanzely2021accelerated,lu2018relatively,nesterov2019implementable,teboulle2018simplified}.

The goal of this paper is to propose a Bregman inertial forward-reflected-backward method \BiFRB* for solving~\eqref{eq:P}, which, roughly speaking, iterates
\[
	x^{k+1}
{}\in{}
	(\nabla h+\gamma\partial g)^{-1}(\nabla h(x^k)+\beta(\nabla h(x^k)-\nabla h(x^{k-1})-\gamma(2\nabla f(x^k)-\nabla f(x^{k-1})),
\]
where $\gamma>0$ is the stepsize, $\beta$ is inertial parameter, and $h$ is the kernel.
In the convex case, the above scheme reduces to the inertial forward-reflected-backward method proposed in~\cite{malitsky2020forward} when $h=(\nicefrac12)\|\cdot\|^2$, which is not applicable to~\eqref{eq:P} due to its assumption on Lipschitz continuity of $\nabla f$.

A fundamental tool in our analysis is the \BiFRB-envelope~(see \cref{def: envelope}), which is the value function associated to the parametric minimization of a ``model'' of~\eqref{eq:P}; see \cref{sec:model}.
The term ``envelope'' is borrowed from the celebrated Moreau envelope~\cite{moreau1965proximite} and its relation with the proximal operator.
Indeed, there has been a re-emerged interest of employing an associated envelope function to study convergence of splitting methods, such as forward-backward splitting \cite{themelis2018forward,ahookhosh2021bregman}, Douglas-Rachford splitting \cite{themelis2020douglas,themelis2022douglas}, alternating minimization algorithm \cite{stella2018newton}, as well as the splitting scheme of Davis and Yin \cite{liu2019envelope}.
The aforementioned works share one common theme: regularity properties of the associated envelope function are used for further enhancement and deeper algorithmic insights.

Pursuing the same pattern, additionally to studying \emph{global convergence} of the proposed algorithm in \cref{sec:convergence} using the \BiFRB-envelope, we will showcase in \cref{sec:CLyD} how it offers a \emph{versatile globalization framework} for fast local methods in the full generality of \cref{ass:basic}.
Such framework revealed the necessity of interleaving noninertial trial steps in the globalization strategy, an observation that led to a substantial change in the linesearch strategy with respect to related works.

A \emph{major departure} of our analysis from previous work is that we consider an envelope function with two independent variables, allowing us to take inertial terms into account. In this regard, we believe that our methodology is appealing in its own right, as it can be instrumental for deriving inertial extensions of other splitting methods.
Another notable feature of this work is that, as one shall see in \cref{sec:bounds}, \emph{non-positive inertial parameter} is required for the sake of convergence under relative smoothness.
This result, although more pessimistic, aligns with the recent work~\cite{dragomir2022optimal} regarding the impossibility of accelerated Bregman forward-backward method under the same assumption; see \cref{rem:negative inertia} for a detailed discussion.
Our work \emph{differs} from the analysis carried out in \cite{wang2022bregman}, which also deals with an inertial forward-reflected-backward algorithm using Bregman metrics but is still limited to the Lipschitz smoothness assumption.
The game changer that enables us to cope with the relative smoothness is taking the inertial step in the \emph{dual space}, that is, interpolating application of \(\nabla h\) (cf. \cref{state:BiFRB:x} of \cref{alg:BiFRB}), whence the name, inspired by \cite{beck2003mirror}, \emph{mirror inertial} forward-reflected-backward splitting (\BiFRB).

	The rest of the paper is structured as follows.
	In the remainder of the section we formally define the problem setting and the proposed algorithm, and in \cref{sec:preliminaries} we discuss some preliminary material and notational conventions.
	\Cref{sec:toolbox} introduces the \BiFRB-envelope and an associated merit function; the proof of the main result therein is deferred to \cref{sec:SD:proof}.
	The convergence analysis is carried out in \cref{sec:convergence}, and finally \cref{sec:CLyD} presents the \BiFRB-based globalization framework.
	\Cref{sec:conclusions} draws some concluding remarks.

		\subsection{Problem setting and proposed algorithm}
			Throughout, we fix a Legendre kernel \(\func{h}{\R^n}{\Rinf}\) with \(\dom\nabla h=\interior\dom h=C\),
namely, a proper, lsc, and strictly convex function  that is 1-coercive and \DEF{essentially smooth}, \ie such that \(\|\nabla h(x_k)\|\to\infty\) for every sequence \(\seq{x_k}\subset C\) converging to a boundary point of \(C\).
We will consider the following iterative scheme for addressing problem \eqref{eq:P},
where \(\func{\D}{\R^n\times\R^n}{\Rinf}\) denotes the \DEF{Bregman distance} induced by \(h\), defined as
\begin{equation}\label{eq:D}
	\D(x,y)
{}\coloneqq{}
	\begin{ifcases}
		h(x)-h(y)-\innprod{\nabla h(y)}{x-y} & y\in C,
	\\
		\infty\otherwise.
	\end{ifcases}
\end{equation}

\begin{algorithm}
	\caption{Mirror inertial forward-reflected-backward (\protect\BiFRB)}%
	\label{alg:BiFRB}%
	\begin{algorithmic}[1]%
\setlength{\itemsep}{1ex}%
\item[%
	Choose \(x^{-1},x^0\in C\),~
	inertial parameter \(\beta\in\R\),~ and
	stepsize \(\gamma>0\)%
]%
\item[%
	Iterate for \(k=0,1,\ldots\) until a termination criterion is met (cf. \cref{thm:termination})
]%
\State\label{state:BiFRB:y}%
	Set \(y^k\) such that
	\(
		\nabla h(y^k)
	{}={}
		\nabla h(x^k)-\gamma\bigl(\nabla f(x^k)-\nabla f(x^{k-1})\bigr)
	\)%
\State\label{state:BiFRB:x}%
	Choose
	\(
	\ifsiam
		\mathtight
	\fi
		x^{k+1}
	{}\in{}
		\argmin\limits_{w\in\R^n}\set{
			g(w)
			{}+{}
			\innprod*{w}{\nabla f(x^k)-\tfrac\beta\gamma\bigl(\nabla h(x^k)-\nabla h(x^{k-1})\bigr)}
			{}+{}
			\tfrac1\gamma\D(w,y^k)
		}
	\)%
\end{algorithmic}

\end{algorithm}

Note that \cref{alg:BiFRB} takes inertial step in the dual space, hence the abbreviation \BiFRB.
We will work under the following assumptions.

\begin{assumption}\label{ass:basic}%
	The following hold in problem \eqref{eq:P}:
	\begin{enumeratass}
	\item\label{ass:f}%
		\(\func{f}{\R^n}{\Rinf}\) is smooth relative to \(h\) (see \cref{sec:relsmooth}).
	\item\label{ass:g}%
		\(\func{g}{\R^n}{\Rinf}\) is proper and lsc.
	\item\label{ass:phi}%
		\(\inf\cost>-\infty\).
	\item\label{ass:T}%
		For any \(v\in\R^n\) and \(\gamma>0\), \(\argmin\set{\gamma g+h-\innprod{v}{\cdot}}\subseteq C\).
	\end{enumeratass}
\end{assumption}

As will be made explicit in \cref{thm:basic}, \cref{ass:T} is a requirement ensuring that \cref{alg:BiFRB} is well defined.
Note that, in general, the minimizers therein are a (possibly empty) subset of \(\dom h\cap\dom g\); \cref{ass:T} thus only excludes points on the boundary of \(\dom h\).
This standard requirement is trivially satisfied when \(\dom h\) is open, or more generally when constraint qualifications enabling a subdifferential calculus rule on the boundary are met, as is the case when \(g\) is convex.

\begin{remark}[constraint qualifications for \cref{ass:T}]%
	If \(g\) is proper and lsc, \cref{ass:T} is satisfied if \(\partial^\infty g\cap\bigl(-\partial^\infty h\bigr)\subseteq\set0\) holds everywhere (this condition being automatically guaranteed at all point outside the boundary of \(C\), having \(\partial^\infty h\) empty outside \(\dom h\) and \(\set0\) in its interior).
	Indeed, optimality of \(\bar x\in\argmin\set{\gamma g+h-\innprod{v}{\cdot}}\) implies that
	\(
		v\in\partial[\gamma g+h](\bar x)\subseteq\gamma\partial g(\bar x)+\partial h(\bar x)
	\),
	with inclusion holding by \cite[Cor. 10.9]{rockafellar2011variational} and implying nonemptiness of \(\partial h(\bar x)\); see \cref{sec:notation} for definitions of subdifferentials. 
\end{remark}

Regardless, it will be shown in \cref{thm:proxbounded} that existence of minimizers is guaranteed for small enough values of \(\gamma\), which will then be linked in \cref{thm:basic} to the well definedness of \cref{alg:BiFRB}.

	\section{Preliminaries}\label{sec:preliminaries}
		
		\subsection{Notation}\label{sec:notation}
			The extended-real line is denoted by \(\Rinf\coloneqq\R\cup\set{\pm\infty}\).
The positive and negative part of \(r\in\R\) are respectively defined as \([r]_+\coloneqq\max\set{0,r}\) and \([r]_-\coloneqq\max\set{0,-r}\), so that \(r=[r]_+-[r]_-\).

The distance of a point \(x\in\R^n\) to a nonempty set \(S\subseteq\R^n\) is given by \(\dist(x,S)=\inf_{z\in S} \|z-x\|\).
The interior, closure, and boundary of \(S\) are respectively denoted as \(\interior S\), \(\overline S\), and \(\boundary S=\overline S\setminus\interior S\).
The \DEF{indicator function} of \(S\) is \(\func{\indicator_S}{\R^n}{\Rinf}\) defined as \(\indicator_S(x)=0\) if \(x\in S\) and \(\infty\) otherwise.

A function \(\func{f}{\R^n}{\Rinf}\) is \DEF{proper} if \(f\not\equiv\infty\) and \(f>-\infty\), in which case its \DEF{domain} is defined as the set \(\dom f\coloneqq\set{x\in\R^n}[f(x)<\infty]\).
For \(\alpha\in\R\), \([f\leq\alpha]\coloneqq\set{x\in\R^n}[f(x)\leq\alpha]\) denotes the \(\alpha\)-sublevel set of \(f\);
	\([\alpha\leq f\leq\beta]\) with \(\alpha,\beta\in\R\) is defined accordingly. We say that \(f\) is \DEF{level bounded} (or \DEF{coercive}) if \(\liminf_{\|x\|\to\infty}f(x)=\infty\), and \(1\)-coercive if
\(\lim_{\|x\|\to\infty}f(x)/\|x\|=\infty\).
A point \(x_\star\in\dom f\) is a \DEF{local minimum} for \(f\) if \(f(x)\geq f(x_\star)\) holds for all \(x\) in a neighborhood of \(x_\star\).
If the inequality can be strengthened to \(f(x)\geq f(x_\star)+\tfrac\mu2\|x-x_\star\|^2\) for some \(\mu>0\), then \(x_\star\) is a \DEF{strong local minimum}.
The \DEF{convex conjugate} of \(f\) is denoted as \(\conj f\coloneqq\sup_z\set{\innprod{{}\cdot{}}{z}-f(z)}\).
Given \(x\in\dom f\), \(\partial f(x)\) denotes the \DEF{Mordukhovich (limiting) subdifferential} of \(f\) at \(x\), given by
\[
	\partial f(x)
{}\coloneqq{}
	\set{v\in\R^n}[
		\exists\seq{x^k,v^k}~\text{s.t.}~ x^k\to x,~f(x^k)\to f(x),~
		\hat\partial f(x^k)\ni v^k\to v
	],
\]
and \(\hat\partial f(x)\) is the set of \DEF{regular subgradients} of \(f\) at \(x\), namely vectors \(v\in\R^n\) such that
\(
	\liminf_{\limsubstack{z&\to&x\\z&\neq&x}}{
		\frac{
			f(z)-f(x)-\innprod{v}{z-x}
		}{
			\|z-x\|
		}
	}
{}\geq{}
	0
\);
see, e.g.,~\cite{rockafellar2011variational,mordukhovich2018variational}.
\(\C^k(\mathcal U)\) is the set of functions \(\mathcal U\to\R\) which are \(k\) times continuously differentiable.
We write \(\C^k\) if \(\mathcal U\) is clear from context.
The notation \(\ffunc{T}{\R^n}{\R^n}\) indicates a set-valued mapping, whose \DEF{domain} and \DEF{graph} are respectively defined as
\(
	\dom T=\set{x\in\R^n}[T(x)\neq\emptyset]
\)
and
\(
	\graph T=\set{(x,y)\in\R^n\times\R^n}[y\in T(x)]
\).

		\subsection{Relative smoothness and weak convexity}\label{sec:relsmooth}
			In the following definition, as well as throughout the entire paper, we follow the ex\-tend\-ed-real convention \(\infty-\infty=\infty\) to resolve possible ill definitions of difference of extended-real--valued functions.
Similarly, we will adopt the convention that \(\nicefrac10=\infty\).

\begin{definition}[relative smoothness]\label{def:relsmooth}%
	We say that an lsc function \(\func{f}{\R^n}{\Rinf}\) is smooth relative to \(h\) if \(\dom f\supseteq\dom h\) and there exists a constant \(L_{f,h}\geq0\) such that \(L_{f,h}h\pm f\) are convex functions  on \(\inte\dom h\).
	We may alternatively say that \(f\) is \(L_{f,h}\)-smooth relative to \(h\) to make the smoothness modulus \(L_{f,h}\) explicit.
\end{definition}


Note that the constant \(L_{f,h}\) may be loose.
For instance, if \(f\) is convex, then \(Lh+f\) is convex for any \(L\geq0\).
This motivates us to consider one-sided conditions and treat \(f\) and \(-f\) separately.

\begin{definition}[relative weak convexity]\label{def:relweak}%
	We say that an lsc function \(\func{f}{\R^n}{\Rinf}\) is weakly convex relative to \(h\) if there exists a (possibly negative) constant \(\sigma_{f,h}\in\R\) such that \(f-\sigma_{f,h}h\) is a convex function.
	We may alternatively say that \(f\) is \(\sigma_{f,h}\)-weakly convex relative to \(h\) to make the weak convexity modulus \(\sigma_{f,h}\) explicit.
\end{definition}
	
Note that relative smoothness of \(f\) is equivalent to the relative weak convexity of \(\pm f\).
Indeed, if \(f\) is \(L_{f,h}\)-smooth relative to \(h\), then both \(f\) and \(-f\) are \((-L_{f,h})\)-weakly convex.
Conversely, if \(f\) and \(-f\) are \(\sigma_{f,h}\)- and \(\sigma_{-f,h}\)-weakly convex relative to \(h\), respectively, then \(f\) (as well as \(-f\)) is \(L_{f,h}\)-smooth relative to \(h\) with
\begin{equation}\label{eq:Lf}
	L_{f,h}=\max\set{|\sigma_{f,h}|,|\sigma_{-f,h}|}.
\end{equation}
The relative weak convexity moduli \(\sigma_{\pm f,h}\) will be henceforth adopted when referring to \cref{ass:f}.
It will be convenient to \emph{normalize} these quantities into pure numbers
\begin{equation}\label{eq:pf}
	p_{\pm f,h}
{}\coloneqq{}
	\tfrac{\sigma_{\pm f,h}}{L_{f,h}}
{}\in{}
	[-1,1].
\end{equation}
To make all definitions and implications well posed, we will ignore the uninsteresting case in which \(f\) is affine, in which case \(L_{f,h}=0\) for any \(h\).
The comment below will be instrumental in \cref{sec:bounds}.

\begin{remark}\label{thm:p}%
	Invoking~\eqref{eq:Lf} and~\eqref{eq:pf} yields that
	\begin{equation}\label{eq:p+p}
		-2\leq p_{f,h}+p_{-f,h}\leq 0
	\quad\text{and}\quad
		-1\in\set{p_{f,h},p_{-f,h}},
	\end{equation}
	where the second inequality owes to the fact that, by definition, both \(f-\sigma_{f,h}h\) and \(-f-\sigma_{-f,h}h\) are convex functions, and therefore so is their sum \(-(\sigma_{f,h}+\sigma_{-f,h})h=-L_{f,h}(p_{f,h}+p_{-f,h})h\).
	Thus, whenever \(f\) is \emph{convex (resp. concave)}, \emph{one can take} \(p_{f,h}=0\) (resp. \(p_{-f,h}=0\)) and by virtue of the inclusion in \eqref{eq:p+p} it directly follows that \(p_{-f,h}=-1\) (resp. \(p_{f,h}=-1\)).
\end{remark}



We now turn to a lemma that guarantees well definedness of \cref{alg:BiFRB}.

\begin{lemma}[relative prox-boundedness]\label{thm:proxbounded}%
	Suppose that \cref{ass:basic} holds.
	Then, the set \(\argmin\set{\gamma g+h+\innprod{v}{\cdot}}\) as in \cref{ass:T} is nonempty for any \(v\in\R^n\) and \(0<\gamma<\nicefrac{1}{[\sigma_{-f,h}]_-}\).
	In other words, \(g\) is prox bounded relative to \(h\) with threshold \(\gamma_{g,h}\geq\nicefrac{1}{[\sigma_{-f,h}]_-}\) \cite[Def. 2.3]{kan2012moreau}.
\end{lemma}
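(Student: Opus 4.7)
The plan is to invoke the Weierstrass existence theorem for the lsc function $\Psi_v := \gamma g + h + \innprod{v}{\cdot}$. Nonemptiness of $\dom\Psi_v = \dom g \cap \dom h$ follows from \cref{def:relsmooth} (which forces $\dom f \supseteq \dom h = \closure C$) combined with \cref{ass:phi} (which ensures $\dom g \cap \closure C \neq \emptyset$), so $\Psi_v$ is proper and lsc. The only remaining ingredient is level boundedness for the stated range of $\gamma$.

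To that end, the key observation is that the relative weak convexity of $-f$ gives a one-sided descent-type bound. Specifically, convexity of $-f - \sigma_{-f,h} h$ yields, for any reference point $y \in C$ and every $x \in \dom h$,
\[
    f(x) \le f(y) + \innprod{\nabla f(y)}{x - y} + [\sigma_{-f,h}]_-\D(x, y),
\]
having used $-\sigma_{-f,h} \le [\sigma_{-f,h}]_-$ together with $\D \ge 0$. Splitting $\Psi_v = \gamma(f + g) + (h - \gamma f) + \innprod{v}{\cdot}$, lower bounding $f + g$ by $\inf\cost$ on $\closure C = \dom h$, and expanding the Bregman distance, a rearrangement produces
\[
    \Psi_v(x) \ge \gamma\inf\cost + (1 - \gamma[\sigma_{-f,h}]_-)\,h(x) + a_{v,y}(x),
\]
with $a_{v,y}$ an affine function of $x$ depending only on the data $v$, $y$, $\nabla f(y)$, $\nabla h(y)$.

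For $0 < \gamma < 1/[\sigma_{-f,h}]_-$ the coefficient of $h$ is strictly positive, so $1$-coercivity of $h$ forces the right-hand side to tend to $+\infty$ as $\|x\|\to\infty$. Thus $\Psi_v$ is level bounded and the Weierstrass theorem closes the argument. The only delicate point is tracking the \emph{one-sided} modulus $[\sigma_{-f,h}]_-$ rather than the symmetric $L_{f,h}$ from \eqref{eq:Lf}; working with the asymmetric modulus is what allows the threshold to be as large as $1/[\sigma_{-f,h}]_-$---in particular infinite whenever $-f$ is already convex relative to $h$ (e.g., when $f$ is concave, cf. \cref{thm:p}).
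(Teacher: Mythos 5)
Your proof is correct and takes essentially the same route as the paper's: rewrite $\gamma g + h = \gamma(f+g) + (h - \gamma f)$ on $\closure C$, bound $\gamma(f+g)\ge\gamma\inf\cost$ there, and extract a strictly positive multiple $(1-\gamma[\sigma_{-f,h}]_-)$ of the $1$-coercive $h$ from $h-\gamma f$ using the one-sided weak-convexity modulus. The only difference is cosmetic—you make the affine minorant of the convex remainder $-f+[\sigma_{-f,h}]_-h$ explicit via the subgradient inequality at a fixed $y\in C$, whereas the paper simply cites the generic existence of affine minorants for proper lsc convex functions; the small side assertion that $\inf\cost>-\infty$ forces $\dom g\cap\closure C\neq\emptyset$ is not literally true (the infimum could be $+\infty$), but this degenerate case is equally tacitly excluded in the paper's argument.
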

\begin{proof}
	Recall that a proper convex function admits affine minorant; see, e.g,~\cite[Cor. 16.18]{bauschke2017convex}.
	It then follows that \(\gamma g+h\) is 1-coercive by observing that
	\[
		\gamma g+h
	{}={}
		\gamma\cost-\gamma f+h
	{}={}
		\gamma\underbracket*{
			\vphantom{\left([\sigma_{-f,h}]_-\right)}
			\cost
		}_{\mathclap{\geq\inf\cost}}
		{}+{}
		\gamma\underbracket*{
			\left(-f+[\sigma_{-f,h}]_-h\right)
		}_{\text{convex}}
		{}+{}
		\underbracket*{
			\left(1-\gamma[\sigma_{-f,h}]_-\right)
		}_{>0}
		\overbracket*{
			\,h\,
		}^{\text{\clap{1-coercive}}}
	\]
	on \(\closure C\) and \(\infty\) on \(\R^n\setminus\closure C\).
\end{proof}

We point out that the content of this subsection is a (well-known) generalization of the (well-known) equivalence between smoothness relative to the Euclidean kernel \(\j\coloneqq(\nicefrac12)\|{}\cdot{}\|^2\) and Lipschitz differentiability, a fact that will be invoked in \cref{sec:toolbox} and whose proof is given next for the sake of completeness.

\begin{lemma}[Lipschitz smoothness from weak convexity]\label{thm:Lipsmooth}%
	Let \(\j\coloneqq(\nicefrac12)\|{}\cdot{}\|^2\).
	Then, for any \(\func{F}{\R^n}{\Rinf}\) the following are equivalent:
	\begin{enumerateq}
	\item\label{thm:j:weakcvx}%
		there exist \(\sigma_{\pm F}\in\R\) such that both \(F-\sigma_F\j\) and \(-F-\sigma_{-F}\j\) are proper, convex and lsc;
	\item\label{thm:j:innprod}%
		\(\dom\partial F=\R^n\), and there exist \(\sigma_{\pm F}\in\R\) such that for all \((x_i,v_i)\in\graph\partial F\), \(i=1,2\), it holds that
		\(
			\sigma_F\|x_1-x_2\|^2
		{}\leq{}
			\innprod{v_1-v_2}{x_1-x_2}
		{}\leq{}
			-\sigma_{-F}\|x_1-x_2\|^2
		\);
	\item\label{thm:j:C11}%
		\(\nabla F\) is \(L_F\)-Lipschitz differentiable for some \(L_F\geq0\).
	\end{enumerateq}
	In particular, \ref{thm:j:weakcvx} and/or \ref{thm:j:innprod} imply \ref{thm:j:C11} with \(L_F=\max\set{|\sigma_F|,|\sigma_{-F}|}\), and conversely \ref{thm:j:C11} implies \ref{thm:j:weakcvx} and \ref{thm:j:innprod} with \(\sigma_{\pm F}=-L_F\).%
	\begin{proof}~
		\begin{proofitemize}
		\item \ref{thm:j:weakcvx} \(\Leftrightarrow\) \ref{thm:j:innprod}~
			Follows from \cite[Ex.s 12.28(b),(c)]{rockafellar2011variational}.
		\item \ref{thm:j:innprod} \(\Rightarrow\) \ref{thm:j:C11}~
			Invoking again \cite[Ex. 12.28(c)]{rockafellar2011variational} yields that both \(F\) and \(-F\) are lower-\(\C^2\), in the sense of \cite[Def. 10.29]{rockafellar2011variational}, and continuous differentiability then follows from \cite[Prop. 10.30]{rockafellar2011variational}.
			Thus, \(v_i=\nabla F(x_i)\) in assertion \ref{thm:j:innprod}, and denoting \(L_F=\max\set{|\sigma_F|,|\sigma_{-F}|}\) the function \(\psi\coloneqq L_F\j+F\) satisfies
			\[
				0
			{}\leq{}
				\innprod{\nabla(\psi(x_1)-\nabla\psi(x_2)}{x_1-x_2}
			{}\leq{}
				2L_F\|x_1-x_2\|^2.
			\]
			It then follows from \cite[Thm. 2.1.5]{nesterov2018lectures} that \(\psi\) is convex and \((2L_F)\)-Lipschitz differentiable, and that
			\[
				\tfrac{1}{2L_F}
				\|\nabla\psi(x_1)-\nabla\psi(x_2)\|^2
			{}\leq{}
				\innprod{\nabla\psi(x_1)-\nabla\psi(x_2)}{x_1-x_2}.
			\]
			Using the definition of \(\psi\) and expanding the squares yields
			\(
				\tfrac{1}{2L_F}\|\nabla F(x_1)-\nabla F(x_2)\|^2
			{}\leq{}
				\tfrac{L_F}{2}\|x_1-x_2\|^2
			\),
			proving that \(\nabla F\) is \(L_F\)-Lipschitz continuous.
		\item \ref{thm:j:C11} \(\Rightarrow\) \ref{thm:j:weakcvx}~
			From the quadratic upper bound \cite[Prop. A.24]{bertsekas2016nonlinear} it follows that
			\[
				\pm F(x_2)
			{}\geq{}
				\pm F(x_1)
				{}\pm{}
				\innprod{\nabla F(x_1)}{x_2-x_1}
				{}-{}
				\tfrac{L_F}{2}\|x_2-x_1\|^2.
			\]
			or, equivalently,
			\[
				(L_F\j\pm F)(x_2)
			{}\geq{}
				(L_F\j\pm F)(x_1)
				{}+{}
				\innprod{\nabla(L_F\j\pm F)(x_1)}{x_2-x_1}.
			\]
			This proves convexity of \(L_F\j\pm F\), whence the claim by taking \(\sigma_{\pm F}=-L_F\).
		\qedhere
		\end{proofitemize}
	\end{proof}
\end{lemma}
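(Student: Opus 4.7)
The plan is to traverse the three assertions via \ref{thm:j:weakcvx}$\Leftrightarrow$\ref{thm:j:innprod} and \ref{thm:j:weakcvx}$\Leftrightarrow$\ref{thm:j:C11}, exploiting respectively the classical subgradient characterization of convexity and the descent lemma. This avoids going through \ref{thm:j:C11} in the middle and makes the constants drop out naturally.

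For \ref{thm:j:weakcvx}$\Leftrightarrow$\ref{thm:j:innprod}, I would invoke the standard fact that a proper lsc function is convex if and only if its limiting subdifferential is monotone with full domain (see, e.g., \cite[Thm. 12.17]{rockafellar2011variational}). Applied to $F-\sigma_F\j$ and $-F-\sigma_{-F}\j$, and using the calculus rule $\partial(F\pm\sigma\j)=\partial F\pm\sigma\,\mathrm{id}$ since $\j$ is smooth (so the sum rule is exact), monotonicity of these perturbed subdifferentials translates verbatim into the two-sided inner-product bounds of \ref{thm:j:innprod}; the hypothesis $\dom\partial F=\R^n$ is preserved in both directions since shifting by a smooth term does not alter the domain of the subdifferential.

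For \ref{thm:j:C11}$\Rightarrow$\ref{thm:j:weakcvx}, apply the descent lemma \cite[Prop. A.24]{bertsekas2016nonlinear} to the $L_F$-Lipschitz differentiable functions $\pm F$ to obtain
\[
	(L_F\j\pm F)(x_2)
{}\geq{}
	(L_F\j\pm F)(x_1)
	{}+{}
	\innprod{\nabla(L_F\j\pm F)(x_1)}{x_2-x_1},
\]
exhibiting $L_F\j\pm F$ as a $\C^1$ function that dominates every affine tangent, hence convex; this yields \ref{thm:j:weakcvx} with $\sigma_{\pm F}=-L_F$.

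The main obstacle is \ref{thm:j:innprod}$\Rightarrow$\ref{thm:j:C11}, where genuine $\C^{1,1}$ regularity must be extracted from purely subgradient-level bounds. My approach is two-step. First, upgrade $F$ to $\C^1$: the two-sided inequality in \ref{thm:j:innprod}, read through the equivalence already established, means that both $F$ and $-F$ are lower-$\C^2$ in the sense of \cite[Def. 10.29]{rockafellar2011variational}, and hence continuously differentiable by \cite[Ex. 12.28(c), Prop. 10.30]{rockafellar2011variational}. Once differentiability is secured, set $L_F=\max\{|\sigma_F|,|\sigma_{-F}|\}$ and $\psi\coloneqq L_F\j+F$; the bounds of \ref{thm:j:innprod} become
\[
	0
{}\leq{}
	\innprod{\nabla\psi(x_1)-\nabla\psi(x_2)}{x_1-x_2}
{}\leq{}
	2L_F\|x_1-x_2\|^2,
\]
identifying $\psi$ as convex with $2L_F$-Lipschitz gradient. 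The Baillon--Haddad--type cocoercivity estimate for convex $\C^{1,1}$ functions \cite[Thm. 2.1.5]{nesterov2018lectures} then yields
\(
	\tfrac{1}{2L_F}\|\nabla\psi(x_1)-\nabla\psi(x_2)\|^2
{}\leq{}
	\innprod{\nabla\psi(x_1)-\nabla\psi(x_2)}{x_1-x_2}
\);
expanding $\nabla\psi=L_F\,\mathrm{id}+\nabla F$, using the upper bound in \ref{thm:j:innprod} on the right-hand side, and cancelling the common $L_F^2\|x_1-x_2\|^2$ and cross terms leaves $\|\nabla F(x_1)-\nabla F(x_2)\|^2\leq L_F^2\|x_1-x_2\|^2$, which is the claimed Lipschitz estimate with the sharp constant.
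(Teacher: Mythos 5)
Your proof follows essentially the same route as the paper's: \ref{thm:j:weakcvx}$\Leftrightarrow$\ref{thm:j:innprod} via a monotonicity characterization of convexity applied to $F\mp\sigma_{\pm F}\j$, then \ref{thm:j:innprod}$\Rightarrow$\ref{thm:j:C11} by first upgrading to $\C^1$ through lower-$\C^2$ regularity and invoking Nesterov's cocoercivity estimate on $\psi=L_F\j+F$, and finally \ref{thm:j:C11}$\Rightarrow$\ref{thm:j:weakcvx} via the descent lemma. The only differences are cosmetic: you cite Theorem~12.17 of Rockafellar--Wets for the first equivalence where the paper cites Exercise~12.28(b),(c), and your remark about ``using the upper bound in \ref{thm:j:innprod} on the right-hand side'' in the final step is unnecessary, since after multiplying the cocoercivity inequality by $2L_F$ the cross terms and the $L_F^2\|x_1-x_2\|^2$ terms cancel purely algebraically to leave $\|\nabla F(x_1)-\nabla F(x_2)\|^2\leq L_F^2\|x_1-x_2\|^2$.
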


	\section{Algorithmic analysis toolbox}\label{sec:toolbox}
		In the literature, convergence analysis for nonconvex splitting algorithms typically revolves around the identification of a `Lyapunov potential', namely, a lower bounded function that decreases its value along the iterates.
In this section we will pursue this direction.
We will actually go one step further in identifying a function which, additionally to serving as Lyapunov potential, is also \emph{continuous}.
This property, whose utility will be showcased in \cref{sec:CLyD}, will come as the result of a parametric minimization, as discussed in the following two subsections.

In what follows, to simplify the discussion we introduce
\begin{equation}
	\h\coloneqq\tfrac1\gamma h-f
\quad\text{and}\quad
	\f\coloneqq f-\tfrac\beta\gamma h,
\end{equation}
and observe that \(\h\) too is a Legendre kernel, for \(\gamma\) small enough.

\begin{lemma}[{{\cite[Thm. 4.1]{ahookhosh2021bregman}}}]\label{thm:Legendre}%
	Suppose that \cref{ass:f} holds.
	Then, for every \(\gamma<\nicefrac{1}{[\sigma_{-f,h}]_-}\) the function \(\h\) is a Legendre kernel with \(\dom\h=\dom h\).
\end{lemma}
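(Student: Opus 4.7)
The plan is to exhibit \(\hat h = \nicefrac1\gamma h - f\) in the form \(\alpha h + g_c\), with \(\alpha > 0\) and \(g_c\) proper lsc convex, so that each Legendre property of \(\hat h\) is inherited from \(h\) through this decomposition. Concretely, I would set \(\alpha \coloneqq \nicefrac1\gamma - [\sigma_{-f,h}]_-\), which is strictly positive by the assumed bound on \(\gamma\) (using the convention \(\nicefrac10 = \infty\)), and \(g_c \coloneqq [\sigma_{-f,h}]_- h - f\). Convexity of \(g_c\) follows by rewriting \(g_c = (-f - \sigma_{-f,h} h) + [\sigma_{-f,h}]_+ h\), a sum of two convex functions (the first by the \(\sigma_{-f,h}\)-relative weak convexity of \(-f\), the second a nonnegative multiple of the convex \(h\)). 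Since \(\dom f \supseteq \dom h\), one has \(\dom g_c \supseteq \dom h\), hence \(\dom \hat h = \dom h\), and in particular \(\inte \dom \hat h = C\).

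With this decomposition in hand, the remaining Legendre properties of \(\hat h\) would follow routinely. \emph{Properness} is immediate from \(\dom \hat h = \dom h \neq \emptyset\). \emph{Lower semicontinuity} follows from lsc of \(\alpha h\) together with lsc of \(g_c\); the latter combines lsc of \(h\) and \(f\) with convexity of \(g_c\) (and the Legendre regularity of \(h\) when boundary points happen to lie in \(\dom h\)). \emph{Strict convexity}: \(\alpha h\) is strictly convex as a positive multiple of the strictly convex \(h\), a property preserved by adding the convex \(g_c\). \emph{1-coercivity}: being proper lsc convex, \(g_c\) admits an affine minorant, which when added to the 1-coercive \(\alpha h\) still yields a 1-coercive function.

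The crux is \emph{essential smoothness}. On \(C\), \(\hat h\) is clearly differentiable with \(\nabla \hat h = \nicefrac1\gamma \nabla h - \nabla f\). For any boundary point \(\bar x \in \dom h \setminus C\) (if any), the constraint qualification \(C \subseteq \inte \dom g_c\) permits invoking the convex subdifferential sum rule \cite[Cor.~10.9]{rockafellar2011variational} to obtain \(\partial \hat h(\bar x) = \alpha \partial h(\bar x) + \partial g_c(\bar x)\); essential smoothness of \(h\) yields \(\partial h(\bar x) = \emptyset\), so the Minkowski sum is empty and \(\partial \hat h(\bar x) = \emptyset\) as well. By the standard equivalence for convex lsc functions between emptiness of the subdifferential at boundary points and the gradient-blowup condition, this gives essential smoothness of \(\hat h\). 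I expect this subdifferential-based step to be the main technical point, since it sidesteps a direct analysis of \(\|\nicefrac1\gamma \nabla h - \nabla f\|\) on boundary-approaching sequences, which could a priori fail due to cancellations between the two gradient contributions.
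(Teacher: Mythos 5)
The paper supplies no proof of this lemma; it invokes the result directly from \cite[Thm.~4.1]{ahookhosh2021bregman}, so there is no internal argument to compare against. Your reconstruction via the decomposition \(\h=\alpha h+g_c\), with \(\alpha=\nicefrac1\gamma-[\sigma_{-f,h}]_->0\) and \(g_c=[\sigma_{-f,h}]_-h-f\) convex, is the natural way to transfer Legendre structure from \(h\) to \(\h\), and your handling of essential smoothness through emptiness of \(\partial h\) at boundary points together with the convex subdifferential sum rule is a clean and correct device. Strict convexity, \(1\)-coercivity, properness, and the identity \(\dom\h=\dom h\) all go through as you argue.

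The one step that does not hold up as written is lower semicontinuity. You claim lsc of \(g_c\) ``from lsc of \(h\) and \(f\) with convexity of \(g_c\),'' but \(f\) enters \(g_c\) with a negative sign: lsc of \(f\) gives only upper semicontinuity of \(-f\), and convexity by itself does not restore lsc at boundary points of the domain. When \(\dom h\) is open (in particular under \cref{ass:h}, the regime in which the paper actually uses the lemma) the gap closes without ever needing lsc of \(g_c\): \(\h\) is continuous on \(C\), and the bound \(\h\geq\alpha h+\ell\) for any affine minorant \(\ell\) of \(g_c\) forces \(\h\to\infty\) along any sequence in \(C\) approaching \(\boundary C\), which already yields lsc of \(\h\). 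If \(\dom h\) is allowed to contain boundary points, however, lsc of \(\h\) there requires a separate argument (or a strengthened hypothesis, as the cited reference presumably carries), and the same caveat mildly affects the sum-rule step, since \(g_c\) as you define it is only known to be convex on \(\inte\dom h\) rather than as an extended-real convex function on all of \(\R^n\); restricting to \(g_c+\indicator_{\dom h}\) and verifying convexity up to the boundary is the missing bookkeeping. As stated, your proof does not close this boundary case, but the core ideas are sound.
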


	We will also (ab)use the notation \(\D_\psi\) of the Bregman distance for functions \(\func{\psi}{\R^n}{\Rinf}\) differentiable on \(C\) that are not necessarily convex, thereby possibly having \(\D_\psi\not\geq0\).
	This notational abuse is justified by the fact that all algebraic identities of the Bregman distance used in the manuscript (e.g., the three-point identity \cite[Lem. 3.1]{chen1993convergence}) are valid regardless of whether \(\psi\) is convex or not, and will overall yield a major simplification of the math.

		\subsection{Parametric minimization model}\label{sec:model}
		As a first step towards the desired goals, as well as to considerably simplify the discussion, we begin by observing that the \BiFRB-update is the result of a parametric minimization.
Namely, by introducing the ``model''
\begin{subequations}
	\begin{align}
	\label{eq:M-innprod}
		\M(w;x,x^-)
	{}={} &
		\varphi(w)
		{}+{}
		\D_{\h}(w,x)
		{}+{}
		\innprod*{w-x}{\nabla\f(x)-\nabla\f(x^-)}
	\\
	\label{eq:M-D}
	{}={} &
		\varphi(w)
		{}+{}
		\D_{\h-\f}(w,x)
		{}+{}
		\D_{\f}(w,x^-)
		{}-{}
		\D_{\f}(x,x^-),
	\end{align}
\end{subequations}
observe that the \(x\)-update in \BiFRB\ can be compactly expressed as
\begin{subequations}\label{subeq:x+}
	\begin{align}
		x^{k+1}
	{}\in{} &
		\T(x^k,x^{k-1}),
	\shortintertext{%
		where \(\ffunc{\T}{C\times C}{C}\) defined by
	}
	\label{eq:T}
		\T(x,x^-)
	{}\coloneqq{} &
		\argmin_{w\in\R^n}\M(w;x,x^-)
	\end{align}
\end{subequations}
is the \BiFRB-operator with stepsize \(\gamma\) and inertial parameter \(\beta\).
The fact that \(\T\) maps pairs in \(C\times C\) to subsets of \(C\) is a consequence of \cref{ass:T}, as we are about to formalize in \cref{thm:T:osc}.
Note that many \(\M\) can be defined giving rise to the same \(\T\), and all these differ by additive terms which are constant with respect to \(w\).
Among these, the one given in \eqref{eq:M-D} reflects the tangency condition \(\M(x;x,x^-)=\varphi(x)=\cost(x)\) for every \(x,x^-\in C\).
A consequence of this fact and other basic properties are summarized next.

\begin{lemma}[basic properties of the model \(\M\) and the operator \(\T\)]\label{thm:basic}%
	Suppose that \cref{ass:basic} holds, and let \(\gamma<\nicefrac{1}{[\sigma_{-f,h}]_-}\) and \(\beta\in\R\) be fixed.
	The following hold:
	\begin{enumerate}
	\item \label{thm:M:eq}%
		\(
			\M(x;x,x^-)
		{}={}
			\cost(x)
		\)
		for all \(x,x^-\in C\).
	\item \label{thm:M:lb}%
		\(\M(w;x,x^-)\) is level bounded in \(w\) locally uniformly in \((x,x^-)\) on \(C\times C\).\footnote{%
			Namely, \(\bigcup_{(x,x^-)\in\mathcal V}\set{w\in\R^n}[\M(w;x,x^-)\leq\alpha]\) is bounded for any \(\mathcal V\subset C\times C\) compact and \(\alpha\in\R\).%
		}
	\item \label{thm:T:osc}%
		\(\T\) is locally bounded and osc,\footnote{%
			Being \(\T\) defined on \(C\times C\), osc and local boundedness are meant relative to \(C\times C\).
			Namely, \(\graph\T\) is closed relative to \(C\times C\times\R^n\), and \(\bigcup_{(x,x^-)\in\mathcal V}\T(x,x^-)\) is bounded for any \(\mathcal V\subset C\times C\) compact.%
		}
		and \(T(x,x^-)\) is a nonempty and compact subset of \(C\) for any \(x,x^-\in C\).
	\item \label{thm:T:partial}%
		\(\nabla\h(x)-\nabla\h(\bar x)-\nabla\f(x)+\nabla\f(x^-)\in\hat\partial\varphi(\bar x)\) for any \(x,x^-\in C\) and \(\bar x\in\T(x,x^-)\).
	\item \label{thm:T:fix}%
		If \(x\in\T(x,x)\), then \(0\in\hat\partial\varphi(x)\) and \(\def\stepsize{\gamma'}\T(x,x)=\set{x}\) for every \(\gamma'\in(0,\gamma)\).
	\end{enumerate}
	\begin{proof}
		We start by observing that \cref{thm:proxbounded} ensures that the set \(\T(x,x^-)\) is non\-empty for any \(x,x^-\in C\); this follows by considering the expression \eqref{eq:M-innprod} of the model, by observing that, for any \(x\in C\),
		\(
			\varphi+\D_{\h}({}\cdot{},x)
		{}={}
			g+\tfrac1\gamma h-\h(x)-\innprod*{\nabla\h(x)}{{}\cdot{}-x}
		\).
		For the same reason, it then follows from \cref{ass:T} that \(T(x,x^-)\subset C\).
		\begin{proofitemize}
		\item \ref{thm:M:eq}~
			Apparent, by considering \(w=x\) in \eqref{eq:M-D}.
		\item \ref{thm:M:lb} \& \ref{thm:T:osc}~
			The first assertion owes to the fact that \(\h\) is 1-coercive by \cref{thm:Legendre} and that both \(\h\) and \(\nabla\f\) are continuous on \(C\), so that for any compact \(\mathcal V\subset\R^n\times\R^n\) one has that
			\(
				\lim_{\|w\|\to\infty}\inf_{(x,x^-)\in\mathcal V}\M(w;x,x^-)=\infty
			\),
			as is apparent from \eqref{eq:M-innprod}.
			In turn, the second assertion follows from \cite[Thm. 1.17]{rockafellar2011variational}.
		\item \ref{thm:T:partial}~
			Follows from the optimality conditions of \(\bar x\in\argmin\M({}\cdot{};x,x^-)\), for \(\bar x\in C\) by assumption and thus the calculus rule of \cite[Ex. 8.8(c)]{rockafellar2011variational} applies (having \(\h\) smooth around \(\bar x\in C\)).
		\item \ref{thm:T:fix}~
			That \(0\in\hat\partial\varphi(x)\) follows from assertion \ref{thm:T:partial}, and the other claim from \cite[Lem. 3.6]{ahookhosh2021bregman} by observing that \(\T(x,x)=\argmin\set{\varphi+\D_{\h}({}\cdot{},x)}\) for any \(\gamma>0\) and \(\beta\in\R\).
		\qedhere
		\end{proofitemize}
	\end{proof}
\end{lemma}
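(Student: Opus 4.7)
My plan is to exploit the two equivalent expressions \eqref{eq:M-innprod} and \eqref{eq:M-D} of the model, each convenient for a different subset of the five claims. For \ref{thm:M:eq}, plugging \(w=x\) into \eqref{eq:M-D} causes both the Bregman terms \(\D_{\h-\f}(x,x)\) and the residual \(\D_{\f}(x,x^-)-\D_{\f}(x,x^-)\) to vanish, leaving \(\varphi(x)=\cost(x)\) since \(x\in C\subseteq\closure C\). For \ref{thm:T:partial}, I would write the first-order optimality condition of \(\bar x\in\argmin\M(\cdot;x,x^-)\), invoking the subdifferential sum rule of \cite[Ex. 8.8(c)]{rockafellar2011variational} (applicable because \(\h\), and hence \(\D_{\h}(\cdot,x)\), is \(\C^1\) around \(\bar x\in C\)); after cancelling the \(\nabla f(\bar x)\) contributions between \(\nabla\h\) and \(\hat\partial\varphi=\nabla f+\hat\partial g\) on \(C\), the stated inclusion drops out. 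Item \ref{thm:T:fix} then follows by specializing \ref{thm:T:partial} to \(x^-=x\) to obtain \(0\in\hat\partial\varphi(x)\), and by appealing to \cite[Lem. 3.6]{ahookhosh2021bregman} for the singleton claim under a strictly smaller stepsize, noting that setting \(x^-=x\) kills the inner-product term in \eqref{eq:M-innprod}, so that \(\T(x,x)=\argmin\set{\varphi+\D_{\h}(\cdot,x)}\) regardless of \(\beta\).

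The more substantive items are \ref{thm:M:lb} and \ref{thm:T:osc}. The key observation for \ref{thm:M:lb} is that, using \(\gamma\h=h-\gamma f\) inside \eqref{eq:M-innprod}, the composite \(\varphi+\D_{\h}(\cdot,x)\) collapses on \(\R^n\) to \(g+\tfrac1\gamma h\) up to an affine functional in \(w\) with coefficients depending continuously on \(x\in C\); the extra inner product involving \(\nabla\f(x)-\nabla\f(x^-)\) from \eqref{eq:M-innprod} contributes a further affine correction whose slope is bounded on any compact \(\mathcal V\subset C\times C\), again by continuity of \(\nabla\f\) on \(C\). Since \cref{thm:proxbounded} certifies \(1\)-coercivity of \(\gamma g+h\) for \(\gamma<\nicefrac{1}{[\sigma_{-f,h}]_-}\), any such affine perturbation with slope uniformly bounded on \(\mathcal V\) is dominated, yielding uniform level boundedness of \(\M(\cdot;x,x^-)\) over \(\mathcal V\).

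Assertion \ref{thm:T:osc} will then follow from the parametric-minimization machinery of \cite[Thm. 1.17]{rockafellar2011variational}: \(\M(\cdot;x,x^-)\) is lsc in \(w\), jointly continuous in \((x,x^-)\) on \(C\times C\) by smoothness of \(\h\) and \(\f\) there, and \ref{thm:M:lb} supplies the required uniform level boundedness, delivering osc, local boundedness, and nonempty compact images. Containment \(\T(x,x^-)\subseteq C\) is precisely what \cref{ass:T} provides, once the minimization \eqref{eq:T} is rewritten in the canonical form \(\argmin\set{\gamma g+h-\innprod{v}{\cdot}}\) (modulo constants in \(w\)) via the same rearrangement as in \ref{thm:M:lb}.

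The main obstacle I anticipate is \ref{thm:M:lb}: without the reorganization that collapses the \(w\)-dependence onto \(\gamma g+h\) plus an affine functional with uniformly bounded coefficients, the interplay between \(\gamma f\), \(\nabla f\), and \(\nabla h\) obscures the coercivity mechanism. Once that reduction is in place, everything else is essentially bookkeeping, with \cref{thm:proxbounded} supplying both nonemptiness and \(1\)-coercivity, and the remaining items reducing to standard first-order calculus on \(C\).
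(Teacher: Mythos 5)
Your proposal is correct and follows essentially the same route as the paper: plugging \(w=x\) into \eqref{eq:M-D} for \cref{thm:M:eq}, reducing \(\varphi+\D_{\h}(\cdot,x)\) to \(g+\tfrac1\gamma h\) plus an affine term for nonemptiness and \(\T\subset C\), invoking \cite[Thm. 1.17]{rockafellar2011variational} for \cref{thm:T:osc}, the sum rule \cite[Ex. 8.8(c)]{rockafellar2011variational} for \cref{thm:T:partial}, and \cite[Lem. 3.6]{ahookhosh2021bregman} together with the vanishing of the inner-product term at \(x^-=x\) for \cref{thm:T:fix}. The only cosmetic deviation is in \cref{thm:M:lb}, where you appeal to \(1\)-coercivity of \(\gamma g+h\) via \cref{thm:proxbounded} whereas the paper cites \(1\)-coercivity of \(\h\) from \cref{thm:Legendre} together with \(\inf\cost>-\infty\); both rest on the same reduction and are interchangeable here.
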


\begin{remark}[inertial effect]%
	Letting \(\tilde f=f+ch\) and \(\tilde g=g-ch\) for some \(c\in\R\), \(\tilde f+\tilde g\) gives an alternative decomposition of \(\varphi\) which still complies with \cref{ass:basic}, having \(\sigma_{\pm\tilde f,h}=\sigma_{\pm f,h}\pm c\).
	Relative to this decomposition, it is easy to verify that the corresponding model \(\tilde{\mathcal M}^{h\text{-\sc frb}}\) is related to the original one in \eqref{eq:M-innprod} as
	\[
		{\tildeall\M}
	{}={}
		\M
	\quad\text{with}\quad
		\begin{cases}
			\gamma ={} & \frac{\tilde\gamma}{1-\tilde\gamma c}\\[4pt]
			\beta ={} & \frac{\tilde\beta-\tilde\gamma c}{1-\tilde\gamma c}
		\end{cases}
	~~\Leftrightarrow~~~
		\begin{cases}
			\tilde\gamma ={} & \frac{\gamma}{1+\gamma c}\\[4pt]
			\tilde\beta ={} & \frac{\beta+\gamma c}{1+\gamma c},
		\end{cases}
	\]
	and in particular \BiFRB\ steps with the respective parameters coincide.
	The effect of inertia can then be explained as a redistribution of multiples of \(h\) among \(f\) and \(g\) in the problem formulation, having
	\(
		\M
	{}={}
		{%
			\def\model{\tilde{\mathcal M}}
			\def\stepsize{\frac{\gamma}{1-\beta}\,}
			\def\inertia{\!0}
			\M
		}
	\)
	for any \(\gamma>0\) and \(\beta<1\).
\end{remark}

		\subsection{The \texorpdfstring{$i^*$}{i*}FRB-envelope}%
			Having defined model $\M$ and its solution mapping $\T$ resulted from parametric minimization, we now introduce the associated value function, which we name \BiFRB-envelope.

\begin{definition}[\BiFRB-envelope]\label{def: envelope}%
	The envelope function associated to \BiFRB\ with stepsize \(\gamma<\nicefrac{1}{[\sigma_{f,h}]_-}\)  and inertia \(\beta\in\R\) is \(\func{\env}{C\times C}{\R}\) defined as
	\begin{equation}\label{eq:env}
		\env(x,x^-)
	{}\coloneqq{}
		\inf_{w\in\R^n}\M(w;x,x^-).
	\end{equation}
\end{definition}


\begin{lemma}[basic properties of \(\env\)]%
	Suppose that \cref{ass:basic} holds.
	Then, for any \(\gamma<\nicefrac{1}{[\sigma_{f,h}]_-}\) and \(\beta\in\R\) the following hold:
	\begin{enumerate}
	\item\label{thm:env:C0}%
		\(\env\) is (real-valued and) continuous on \(C\times C\); in fact, it is locally Lipschitz provided that \(f,h\in\C^2(C)\).
	\item\label{thm:env:eq}%
		For any \(x,x^-\in C\) and \(\bar x\in\T(x,x^-)\)
		\[
		\ifsiam
			\mathtight[0.45]
		\fi
			\env(x,x^-)
		{}={}
			\M(\bar x;x,x^-)
		{}={}
			\varphi(\bar x)
			{}+{}
			\D_{\h-\f}(\bar x,x)
			{}+{}
			\D_{\f}(\bar x,x^-)
			{}-{}
			\D_{\f}(x,x^-).
		\]
	\item\label{thm:env:leq}%
		\(
			\env(x,x^-)
		{}\leq{}
			\varphi(x)
		\)
		for any \(x,x^-\in C\).
	\end{enumerate}
	\begin{proof}~
		\begin{proofitemize}
		\item \ref{thm:env:C0}~
			In light of the uniform level boundedness asserted in \cref{thm:M:lb}, continuity of \(\env\) follows from $\text{\cite[Thm. 1.17(c)]{rockafellar2011variational}}$ by observing that the mapping \((x,x^-)\mapsto\M(w;x,x^-)\) is continuous for every \(w\);
			in fact, when \(f\) and \(h\) are both \(\C^2\) on \(C\), its gradient exists and is continuous with respect to all its arguments, which together with local boundedness of \(\T\), cf. \cref{thm:T:osc}, gives that \(-\env\) is a lower-\(\C^1\) function in the sense of \cite[Def. 10.29]{rockafellar2011variational}, and in particular locally Lipschitz continuous by virtue of \cite[Thm.s 10.31 and 9.2]{rockafellar2011variational}.
		\item \ref{thm:env:eq} \& \ref{thm:env:leq}~
			The identity follows by definition, cf. \eqref{eq:env} and \eqref{eq:T}.
			The inequality follows by considering \(w=x\) in \eqref{eq:env} and \eqref{eq:M-D}.
		\qedhere
		\end{proofitemize}
	\end{proof}
\end{lemma}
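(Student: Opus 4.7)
The plan is to handle the three assertions in the stated order, leveraging the properties of the model $\M$ and the operator $\T$ already collected in \cref{thm:basic}. Assertions \ref{thm:env:eq} and \ref{thm:env:leq} amount to substitutions, so the substance of the argument is concentrated in the continuity claim \ref{thm:env:C0}.

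For \ref{thm:env:C0}, I would appeal to the parametric-minimization machinery of \cite[Thm.~1.17]{rockafellar2011variational}. Its hypotheses are (i) joint continuity of $(w,x,x^-) \mapsto \M(w;x,x^-)$, which is immediate from the formula \eqref{eq:M-innprod} together with the continuity of $h$, $\nabla h$, $f$, $\nabla f$ on $C$; and (ii) level-boundedness of $\M(\cdot;x,x^-)$ in $w$ locally uniformly in $(x,x^-)$, which is exactly \cref{thm:M:lb}. Part~(c) of that theorem then delivers continuity of the value function $\env$ on $C \times C$. For the locally Lipschitz strengthening under $f,h \in \C^2(C)$, the joint $\C^1$-dependence of $\M$ on $(w,x,x^-)$ combines with the local boundedness of $\T$ from \cref{thm:T:osc}: restricting $w$ to any locally bounded set still realizing the infimum, the function $-\env$ is locally the supremum of a compactly-indexed family of $\C^1$ functions with continuous joint derivatives, hence lower-$\C^1$ in the sense of \cite[Def.~10.29]{rockafellar2011variational}. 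Local Lipschitz continuity of $\env$ then follows from \cite[Thm.~10.31]{rockafellar2011variational} together with \cite[Thm.~9.2]{rockafellar2011variational}.

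For \ref{thm:env:eq}, by definition \eqref{eq:T} any $\bar x \in \T(x,x^-)$ attains the infimum in \eqref{eq:env}, so $\env(x,x^-) = \M(\bar x;x,x^-)$; expanding via \eqref{eq:M-D} yields the explicit formula. For \ref{thm:env:leq}, I would bound the infimum by the value at $w = x$: using \eqref{eq:M-D}, the term $\D_{\h-\f}(x,x)$ vanishes and $\D_{\f}(x,x^-)$ cancels with $-\D_{\f}(x,x^-)$, leaving $\env(x,x^-) \leq \M(x;x,x^-) = \varphi(x)$, which matches the tangency already noted in \cref{thm:M:eq}.

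The only delicate step is \ref{thm:env:C0}; beyond that, everything reduces to bookkeeping. The potential obstacle is to confirm that the local-boundedness input from \cref{thm:M:lb} genuinely applies to all of $C \times C$ (not just to subsets where $\gamma$-dependent thresholds hold), but since $\gamma$ is fixed throughout this statement and $\cref{thm:Legendre}$ guarantees that $\h$ is a Legendre kernel on $\dom h$, the uniform coercivity in $w$ carries through as needed.
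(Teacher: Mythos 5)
Your proposal matches the paper's proof in every essential respect: continuity via \cite[Thm.~1.17(c)]{rockafellar2011variational} using the locally uniform level boundedness from \cref{thm:M:lb} and joint continuity of $\M$, the lower-$\C^1$ argument for the locally Lipschitz strengthening via local boundedness of $\T$ and \cite[Thm.s~10.31 and 9.2]{rockafellar2011variational}, and assertions \ref{thm:env:eq}--\ref{thm:env:leq} by direct substitution with $w=\bar x$ and $w=x$ in \eqref{eq:M-D}. Nothing to add.
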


		\subsection{Establishing a merit function}%
			We now work towards establishing a merit function for \BiFRB, starting from comparing the values of \(\env(\bar x,x)\) and \(\env(x,x^-)\), with \(\bar x\in\T(x,x^-)\).
Owing to \cref{thm:env:leq}, we have
\begin{align}
\nonumber
	\env(\bar x,x)
{}\leq{}
	\varphi(\bar x)
{}={} &
	\cost(\bar x)
\\
\label{eq:descent}
{}={} &
	\env(x,x^-)
	{}-{}
	\D_{\h-\f}(\bar x,x)
	{}-{}
	\overbracket*{
		\D_{\f}(\bar x,x^-)
	}
	{}+{}
	\D_{\f}(x,x^-).
\end{align}
	From here two separate cases can be considered, each yielding surprisingly different results.
	The watershed lies in whether the bracketed term is positive or not: one case will result in a very straightforward convergence analysis in the full generality of \cref{ass:basic}, while the other will necessitate an additional Lipschitz differentiability requirement.
	The convergence analysis in both cases revolves around the identification of a constant \(c>0\) determining a lower bounded merit function
	\begin{equation}\label{eq:LL}
		\LL=\env+\tfrac{c}{2\gamma}\D+\D_\xi.
	\end{equation}
	The difference between the two cases is determined by function \(\xi\) appearing in the last Bregman operator \(\D_\xi\), having \(\xi=\f\) in the former case and \(\xi=L_{\f}\j\) in the latter, where \(L_{\f}\) is a Lipschitz constant for \(\nabla\f\) and
	\begin{equation}\label{eq:j}
		\j\coloneqq\tfrac12\|{}\cdot{}\|^2
	\end{equation}
	is the squared Euclidean norm.
The two cases are stated in the next theorem, which constitutes the main result of this section.
Special and worst-case scenarios leading to simplified statements will be given in \cref{sec:bounds}.
In what follows, patterning the normalization of \(\sigma_{\pm f,h}\) into \(p_{\pm f,h}\) detailed in \cref{sec:relsmooth} we also introduce the scaled stepsize
\begin{equation}
	\alpha\coloneqq\gamma L_{f,h},
\end{equation}
which as a result of the convergence analysis will be confined in the interval \((0,1)\).

\begin{theorem}\label{thm:SD}%
	Suppose that \cref{ass:basic} holds and consider one of the following scenarios:
	\begin{enumerator}
	\item\label{thm:SD:fconv}%
		either \(\f\) is convex (\ie, \(\alpha p_{f,h}-\beta\geq0\)) and
		\(
			\beta
		{}>{}
			-\nicefrac{(1+3\alpha p_{-f,h})}{2}
		\),
		in which case
		\[
			\xi\coloneqq\f
		\quad\text{and}\quad
				c
			{}\coloneqq{}
				1+2\beta+3\alpha p_{-f,h}>0,
		\]
	\item\label{thm:SD:fsmooth}%
		or \(\f\) is \(L_{\f}\)-Lipschitz differentiable, \(h\) is \(\sigma_h\)-strongly convex, and
		\[
			c
		{}\coloneqq{}
			(1+\alpha p_{-f,h})\sigma_h
			{}-{}
			2\gamma L_{\f}
		{}>{}
			0,
		\]
		in which case \(\xi\coloneqq L_{\f}\j\).
	\end{enumerator}
	Then, for \(\LL\) as in \eqref{eq:LL} the following assertions hold:
	\begin{enumerate}
	\item\label{thm:SD:leq}%
		\begin{subequations}
			For every \(x,x^-\in C\) and \(\bar x\in\T(x,x^-)\),
			\begin{align}
			\label{eq:SD}
				\LL(\bar x,x)
			{}\leq{} &
				\LL(x,x^-)
				{}-{}
				\tfrac{c}{2\gamma}\D(\bar x,x)
				{}-{}
				\tfrac{c}{2\gamma}\D(x,x^-)
			\shortintertext{and}
			\label{eq:L:geq}
				\cost(\bar x)
			{}\leq{} &
				\LL(x,x^-)
				{}-{}
				\tfrac{c}{\gamma}\D(\bar x,x)
				{}-{}
				\tfrac{c}{2\gamma}\D(x,x^-).
			\end{align}
		\end{subequations}
	\item\label{thm:SD:inf}%
		\(\inf\LL=\inf\cost\), and \(\LL\) is level bounded iff so is \(\cost\).
	\end{enumerate}
\end{theorem}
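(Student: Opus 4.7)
The plan is to start from the descent estimate~\eqref{eq:descent} and rearrange it into the advertised Lyapunov form. Writing $\LL(x,x^-)-\LL(\bar x,x)$ in terms of $\env$, $\D$, and $\D_\xi$, the target~\eqref{eq:SD} is equivalent to
\begin{equation*}
	\env(x,x^-)-\env(\bar x,x)+\D_\xi(x,x^-)-\D_\xi(\bar x,x)
	\geq
	\tfrac{c}{\gamma}\D(\bar x,x),
\end{equation*}
and~\eqref{eq:L:geq} then follows by combining this with $\env(\bar x,x)\leq\cost(\bar x)$ from~\eqref{eq:descent}. The two scenarios~\ref{thm:SD:fconv}--\ref{thm:SD:fsmooth} diverge in how the term $\D_{\f}(\bar x,x^-)$ appearing in~\eqref{eq:descent} is handled, and consequently in the choice of $\xi$.

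In scenario~\ref{thm:SD:fconv}, convexity of $\f$ yields $\D_{\f}(\bar x,x^-)\geq 0$, which is simply dropped. The choice $\xi=\f$ is engineered so that the surviving $\D_{\f}(x,x^-)$ from~\eqref{eq:descent} cancels against the corresponding piece of $\D_\xi(x,x^-)$. Using $\h-\f=\tfrac{1+\beta}{\gamma}h-2f$ and $\f=f-\tfrac{\beta}{\gamma}h$ to expand $\D_{\h-\f}(\bar x,x)$ and $\D_{\f}(\bar x,x)$ in terms of $\D(\bar x,x)$ and $\D_f(\bar x,x)$, and invoking the relative weak-convexity bound $\D_f(\bar x,x)\leq -\sigma_{-f,h}\D(\bar x,x)$ from \cref{def:relweak} applied to $-f$, collecting the coefficients of $\D(\bar x,x)$ produces precisely $c=1+2\beta+3\gamma\sigma_{-f,h}=1+2\beta+3\alpha p_{-f,h}$.

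In scenario~\ref{thm:SD:fsmooth}, $\f$ need not be convex, and $\D_{\f}(\bar x,x^-)$ is instead processed by the three-point identity $\D_{\f}(\bar x,x^-)=\D_{\f}(\bar x,x)+\D_{\f}(x,x^-)+\innprod{\nabla\f(x)-\nabla\f(x^-)}{\bar x-x}$. Substituting into~\eqref{eq:descent} and using $\D_{\h-\f}+\D_{\f}=\D_\h$ reduces the descent to $\env(\bar x,x)\leq\env(x,x^-)-\D_{\h}(\bar x,x)-\innprod{\nabla\f(x)-\nabla\f(x^-)}{\bar x-x}$. Cauchy--Schwarz combined with the Lipschitz property of $\nabla\f$ and Young's inequality bounds the inner product by $\tfrac{L_\f}{2}\|\bar x-x\|^2+\tfrac{L_\f}{2}\|x-x^-\|^2$, and the telescoping $\D_\xi(x,x^-)-\D_\xi(\bar x,x)=\tfrac{L_\f}{2}\|x-x^-\|^2-\tfrac{L_\f}{2}\|\bar x-x\|^2$ arising from $\xi=L_\f\j$ absorbs exactly these contributions. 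Finally, the lower bound $\D_\h(\bar x,x)\geq\tfrac{1+\gamma\sigma_{-f,h}}{\gamma}\D(\bar x,x)$ from relative weak convexity of $-f$, combined with strong convexity $\D\geq\tfrac{\sigma_h}{2}\|\cdot\|^2$ of $h$ to re-express the residual Euclidean quadratic as a Bregman quantity, produces the stated $c=(1+\alpha p_{-f,h})\sigma_h-2\gamma L_\f$. The most delicate point will be balancing these last two conversions so that the specific form of $c$ emerges cleanly; the design of $\D_\xi$ is engineered for this very purpose.

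For assertion~\ref{thm:SD:inf}, observe that $\D(x,x)=\D_\xi(x,x)=0$ and $\env(x,x)\leq\M(x;x,x)=\cost(x)$ by \cref{thm:M:eq}, so $\LL(x,x)\leq\cost(x)$ and $\inf\LL\leq\inf\cost$. The reverse $\inf\LL\geq\inf\cost$ as well as the equivalence of level boundedness follow from~\eqref{eq:L:geq}: dropping the nonnegative Bregmans gives $\LL(x,x^-)\geq\cost(\bar x)\geq\inf\cost$ for any $\bar x\in\T(x,x^-)$, while a bound $\LL(x^k,x^{k-})\leq\alpha$ controls $\cost(\bar x^k)$, $\D(\bar x^k,x^k)$, and $\D(x^k,x^{k-})$, and the $1$-coercivity of $h$ propagates this into joint boundedness of $(x^k,x^{k-})$; the converse direction uses the diagonal inequality $\LL(x,x)\leq\cost(x)$.
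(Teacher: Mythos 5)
Your treatment of \eqref{eq:SD} matches the paper's. The displayed reformulation is correct, and its proof follows the same path: in scenario~(i), dropping $\D_{\f}(\bar x,x^-)\geq0$ and establishing $\D_{\h-2\f}(\bar x,x)\geq\tfrac c\gamma\D(\bar x,x)$ via the weak-convexity estimate $\D_f\leq-\sigma_{-f,h}\D$ is exactly the paper's convexity argument for $\h-2\f-\tfrac c\gamma h$; in scenario~(ii), the three-point identity followed by Young's inequality reproduces the proof of \eqref{eq:decrease:fsmooth}. Your caution about the final constant in~(ii) is warranted: the computation yields $(1+\alpha p_{-f,h})-\tfrac{2\gamma L_\f}{\sigma_h}$, and the paper's own appendix derives the same (smaller, when $\sigma_h>1$) value, so the printed $(1+\alpha p_{-f,h})\sigma_h-2\gamma L_\f$ appears to carry a typo that you need not reproduce.

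The genuine gap is the assertion that \eqref{eq:L:geq} ``follows by combining'' your reformulation of \eqref{eq:SD} with $\env(\bar x,x)\leq\cost(\bar x)$. Expanding $\LL(\bar x,x)=\env(\bar x,x)+\tfrac{c}{2\gamma}\D(\bar x,x)+\D_\xi(\bar x,x)$, passing from \eqref{eq:SD} to \eqref{eq:L:geq} would require the \emph{lower} bound $\env(\bar x,x)+\D_\xi(\bar x,x)\geq\cost(\bar x)$, whereas the envelope estimate goes the opposite way; feeding $\env(\bar x,x)\leq\cost(\bar x)$ into \eqref{eq:SD} produces only $\env(\bar x,x)\leq\LL(x,x^-)-\tfrac c\gamma\D(\bar x,x)-\tfrac c{2\gamma}\D(x,x^-)$, which is strictly weaker than \eqref{eq:L:geq}. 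The paper obtains \eqref{eq:L:geq} by a \emph{parallel}, not consequent, derivation: it inserts the identity $\env(x,x^-)=\varphi(\bar x)+\D_{\h-\f}(\bar x,x)+\D_{\f}(\bar x,x^-)-\D_{\f}(x,x^-)$ directly into $\LL(x,x^-)$, so that for $\xi=\f$ the $\mp\D_{\f}(x,x^-)$ terms cancel and only the weaker facts $\D_{\h-\f}(\bar x,x)\geq\tfrac c\gamma\D(\bar x,x)$ (implied by convexity of $\h-2\f-\tfrac c\gamma h$ together with $\D_{\f}\geq0$) and $\D_{\f}(\bar x,x^-)\geq0$ are invoked; for scenario~(ii) the analogue inserts the inner-product form of $\M(\bar x;x,x^-)$, applies Young to the cross term, and uses only $\D_{\h-L_\f\j}\geq\tfrac c\gamma\D$. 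Neither of \eqref{eq:SD} and \eqref{eq:L:geq} is a formal consequence of the other, so you need to supply this second derivation explicitly; note also that your proof of the infimum and level-boundedness claim relies on \eqref{eq:L:geq}.
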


The proof of this result is detailed in the dedicated \cref{sec:SD:proof}; before that, let us draw some comments.
As clarified in the statement of \cref{thm:SD:fconv}, convexity of \(\f\) can be enforced by suitably choosing \(\gamma\) and \(\beta\) without imposing additional requirements on the problem. However, an unusual yet reasonable condition on inertial parameter \(\beta\) may be necessary.

\begin{remark}\label{rem:negative inertia}%
	In order to furnish \cref{thm:SD:fconv}, we shall see soon that $\beta\leq0$ may be required; see \cref{sec:bounds}.
	Such assumption, although more pessimistic, coincides with a recent conjecture by Dragomir et al.~\cite[\S4.5.3]{dragomir2022optimal}, which states that inertial methods with nonadaptive coefficients fail to convergence in the relative smoothness setting, and provides an alternative perspective to the same matter through the lens of the convexity of $\f$.
\end{remark}

Unlike \cref{thm:SD:fconv}, however, additional assumptions are needed for the Lipschitz differentiable case of \cref{thm:SD:fsmooth}. 
This is because the requirement is equivalent to smoothness relative to the Euclidean Bregman kernel \(\j\), while \cref{ass:basic} prescribes bounds only relative to \(h\).
For simplicity, for functions \(\func{\psi_1,\psi_2}{\R^n}{\Rinf}\) we write
\[
	\psi_1 \preceq \psi_2
\]
to indicate that \(\psi_2-\psi_1\) is convex.

\begin{remark}\label{thm:fsmooth}%
	Under \cref{ass:basic}, one has that \(\f\) is Lipschitz-differentiable with modulus \(L_{\f}\) under either one of the following conditions:
	\begin{enumerator}[B*]
	\item\label{thm:fsmooth:hsmooth}%
		either \(\nabla h\) is \(L_h\)-Lipschitz, in which case
		\(
			L_{\f}
		{}={}
			\tfrac{L_h}{\gamma}\max\set{
				\beta-\alpha p_{f,h},
				-\beta-\alpha p_{-f,h}
			}
		\),%
	\item\label{thm:fsmooth:beta0}%
		or \(\beta=0\) and \(\nabla f\) is \(L_f\)-Lipschitz, in which case \(L_{\f}=L_f\).
	\end{enumerator}
	Recalling that \(\f=f-\frac\beta\gamma h\), the second condition is tautological.
	In case \(\nabla h\) is \(L_h\)-Lipschitz, \cref{thm:Lipsmooth} yields that
	\[
		-\tfrac{L_h}{\gamma}[\beta-\alpha p_{f,h}]_+\j
	{}\preceq{}
		\tfrac{\alpha p_{f,h}-\beta}{\gamma}h
	{}\preceq{}
		\f
	{}\preceq{}
		-\tfrac{\alpha p_{-f,h}+\beta}{\gamma}h
	{}\preceq{}
		\tfrac{L_h}{\gamma}[-\beta-\alpha p_{-f,h}]_+\j,
	\]
	and that consequently
	\[
		L_{\f}
	{}\coloneqq{}
		\tfrac{L_h}{\gamma}\max\set{
			\bigl[\beta-\alpha p_{f,h}\bigl]_+,
			\bigl[-\beta-\alpha p_{-f,h}\bigr]_+
		}
	{}={}
		\tfrac{L_h}{\gamma}\max\set{
			\beta-\alpha p_{f,h},
			-\beta-\alpha p_{-f,h}
		}
	\]
	is a Lipschitz modulus for \(\nabla\f\).
\end{remark}

		\subsection{Simplified bounds}\label{sec:bounds}%
			In this section we provide bounds that only discern whether \(f\) is convex, concave, or neither of the above.
As discussed in \cref{thm:p}, these cases can be recovered by suitable combinations of the coefficients \(p_{\pm f,h}\in\set{0,\pm1}\), and thus lead to easier, though possibly looser, bounds compared to those in \cref{thm:SD}.
We will also avail ourselves of the estimates of \(L_{\f}\) in \cref{thm:fsmooth} to discuss the cases in which \(\f\) is Lipschitz differentiable.

Without distinguishing between upper and lower relative bounds, whenever \(f\) is \(L_{f,h}\)-smooth relative to \(h\) as in \cref{ass:basic} one can consider \(\sigma_{\pm f,h}=-L_{f,h}\) or, equivalently, \(p_{f,h}=p_{-f,h}=-1\).
Plugging these values into \cref{thm:SD} yields the following.

\begin{corollary}[worst-case bounds]\label{thm:bounds:wc}%
	Suppose that \cref{ass:basic} holds.
	All the claims of \cref{thm:SD} hold when \(\gamma>0\), \(\beta\in\R\) and \(c>0\) are such that
	\begin{enumerator}
	\item\label{thm:bounds:wc:fconv}%
		either
	\(
			-\nicefrac{1}{2}<\beta<0
		\)
and
		\(	\gamma\leq(\nicefrac{1}{L_{f,h}})\min\set{-\beta,\nicefrac{(1+2\beta-c)}{3}},
		\)
		in which case \(\xi=\f\);
	\end{enumerator}
	\begin{enumerator}[B]
	\item\label{thm:bounds:wc:hsmooth}%
		or \(h\) is \(\sigma_h\)-strongly convex and \(L_h\)-Lipschitz differentiable,
	\(
			|\beta|<\nicefrac{\sigma_h}{2L_h}
		\)
	and
       \(
			\gamma\leq(\nicefrac{1}{L_{f,h}})[\nicefrac{(\sigma_h-2L_h|\beta|-c)}{(\sigma_h+2L_h)}],
		\)
		in which case \(\xi=(\nicefrac{L_h}{\gamma})(\alpha+|\beta|)\j\);
	\item\label{thm:bounds:wc:beta0}%
		or \(h\) is \(\sigma_h\)-strongly convex, \(\nabla f\) is \(L_f\)-Lipschitz,
	\(
			\beta=0
		\)
and 
    \(
			\gamma\leq(\nicefrac{1}{L_{f,h}})[\nicefrac{(\sigma_h-c)}{(\sigma_h+2L_h)}],
   \)
		in which case \(\xi=L_f\j\).
	\end{enumerator}
	\begin{proof}
		We will discuss only the bounds on \(\gamma\) as those on \(\beta\) will automatically follow from the fact that \(\gamma>0\).
		Setting \(p_{\pm f,h}=-1\) in \cref{thm:SD}, one has:%
		\begin{proofitemize}
		\item\ref{thm:bounds:wc:fconv}~
			From \cref{thm:SD:fconv} we otain
			\(
				0
			{}<{}
				c
			{}={}
				1+2\beta-3\alpha
			\)
			and
			\(
				-\nicefrac{(1-3\alpha)}{2}
			{}<{}
				\beta
			{}\leq{}
				-\alpha
			\).
			By replacing the first equality with ``\(\leq\)'', that is, by possibly allowing looser values of \(c\), the bound on \(\gamma=\nicefrac{\alpha}{L_{f,h}}\) as in \ref{thm:bounds:wc:fconv} is obtained.
		\item\ref{thm:bounds:wc:hsmooth} \& \ref{thm:bounds:wc:beta0}~
			the two subcases refer to the corresponding items in \cref{thm:fsmooth}.
			We shall show only the first one, as the second one is a trivial adaptation.
			The value of \(L_{\f}\) as in \cref{thm:fsmooth:hsmooth} reduces to \(L_{\f}=(\nicefrac{L_h}{\gamma})(\alpha+|\beta|)\).
			Plugged into \cref{thm:SD:fsmooth} yields
			\(
				0
			{}<{}
				c
			{}={}
				(1-\alpha)\sigma_h
				{}-{}
				2L_h(\alpha+|\beta|)
			\),
			implying that \(\gamma\) is bounded as in \ref{thm:bounds:wc:hsmooth}.
		\qedhere
		\end{proofitemize}
	\end{proof}
\end{corollary}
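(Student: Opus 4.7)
The plan is to read the corollary as a \emph{pure specialization} of \cref{thm:SD} to the worst-case identification $p_{f,h}=p_{-f,h}=-1$, which is always admissible under \cref{ass:f} (this is precisely the ``lazy'' choice $\sigma_{\pm f,h}=-L_{f,h}$ discussed around \eqref{eq:Lf}). So the proof should essentially consist of three substitutions followed by elementary algebra to isolate $\alpha=\gamma L_{f,h}$ and $\beta$. The only subtle bit is which auxiliary Lipschitz estimate of $\nabla\f$ from \cref{thm:fsmooth} to plug into \cref{thm:SD:fsmooth}.

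For \cref{thm:bounds:wc:fconv}, I would apply \cref{thm:SD:fconv}. The convexity requirement $\alpha p_{f,h}-\beta\geq 0$ collapses to $\beta\leq-\alpha$; the lower bound on $\beta$ becomes $\beta>-(1-3\alpha)/2$; and $c=1+2\beta-3\alpha>0$. Combining ``$c>0$'' with ``$\beta\leq-\alpha$'' gives
\[
  \alpha\;\leq\;\min\!\set[\Big]{-\beta,\tfrac{1+2\beta-c}{3}},
\]
which, upon dividing by $L_{f,h}$, is exactly the stated bound on $\gamma$. For this minimum to be strictly positive one needs $\beta<0$ and $\beta>(c-1)/2>-\tfrac12$, yielding the range $-\tfrac12<\beta<0$.

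For \cref{thm:bounds:wc:hsmooth} I would apply \cref{thm:SD:fsmooth} together with \cref{thm:fsmooth:hsmooth}. With $p_{\pm f,h}=-1$ the latter simplifies to $L_{\f}=(L_h/\gamma)(\alpha+|\beta|)$, justifying the stated $\xi=(L_h/\gamma)(\alpha+|\beta|)\j$. The positivity condition $c=(1-\alpha)\sigma_h-2\gamma L_{\f}>0$ then rearranges to
\[
  \alpha(\sigma_h+2L_h)\;<\;\sigma_h-2L_h|\beta|-c,
\]
and dividing through by $L_{f,h}(\sigma_h+2L_h)$ recovers the displayed bound on $\gamma$. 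Requiring the right-hand side to be strictly positive in turn forces $|\beta|<(\sigma_h-c)/(2L_h)<\sigma_h/(2L_h)$.

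For \cref{thm:bounds:wc:beta0} I would instead feed \cref{thm:fsmooth:beta0} (i.e.\ $L_{\f}=L_f$) into \cref{thm:SD:fsmooth}. The same manipulation yields
\[
  \gamma(L_{f,h}\sigma_h+2L_f)\;\leq\;\sigma_h-c.
\]
To bring this into the uniform form stated in the corollary, the only remaining observation I would invoke is the elementary fact that $L_f\leq L_{f,h}L_h$ whenever $f$ is $L_{f,h}$-smooth relative to $h$ and $\nabla h$ is $L_h$-Lipschitz --- immediate from $\|\nabla f(x)-\nabla f(y)\|\leq L_{f,h}\|\nabla h(x)-\nabla h(y)\|$, which is itself a consequence of convexity of $L_{f,h}h\pm f$. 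Replacing $L_f$ by this upper bound enlarges the denominator to $L_{f,h}(\sigma_h+2L_h)$ and recovers the displayed bound, at the cost of mild tightness. The only real obstacle I anticipate is bookkeeping the two-sided constraints so that the inferred $\beta$-range (strictly negative, strictly below $\sigma_h/(2L_h)$, or exactly zero) is neither circular nor vacuous; but since each case is tethered to $c>0$, this is straightforward.
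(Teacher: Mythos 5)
Your proof follows exactly the same route as the paper: specialize \cref{thm:SD} to the worst-case moduli $p_{\pm f,h}=-1$, use \cref{thm:fsmooth} to supply the Lipschitz constant of $\nabla\f$, and isolate $\alpha=\gamma L_{f,h}$. The one place where you go beyond the paper's terse treatment is \ref{thm:bounds:wc:beta0}: the paper dismisses it as a ``trivial adaptation,'' but feeding $L_{\f}=L_f$ into \cref{thm:SD:fsmooth} literally gives $\gamma\leq(\sigma_h-c)/(L_{f,h}\sigma_h+2L_f)$ (which is in fact the form that appears in the companion \cref{thm:bounds:convex:beta0}), not the denominator $L_{f,h}(\sigma_h+2L_h)$ printed here; your observation that $L_f\leq L_{f,h}L_h$ --- provable via the two-sided inner-product bound $|\innprod{\nabla f(x)-\nabla f(y)}{x-y}|\leq L_{f,h}\innprod{\nabla h(x)-\nabla h(y)}{x-y}\leq L_{f,h}L_h\|x-y\|^2$ and \cref{thm:Lipsmooth} --- is precisely what is needed to legitimize the stated (more conservative) bound, and is worth recording explicitly.
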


When \(f\) is convex, \(\sigma_{f,h}=0\) can be considered resulting in \(p_{f,h}=0\) and \(p_{-f,h}=-1\).

\begin{corollary}[bounds when \(f\) is convex]\label{thm:bounds:convex}%
	Suppose that \cref{ass:basic} holds and that \(f\) is convex.
	All the claims of \cref{thm:SD} remain valid if \(\gamma>0\), \(\beta\in\R\) and \(c>0\) are such that
	\begin{enumerator}
	\item\label{thm:bounds:convex:fconv}%
	either	
		\(
			-\nicefrac12<\beta\leq0
		\)
		and
		\(	\gamma\leq(\nicefrac{1}{L_{f,h}})[\nicefrac{(1+2\beta-c)}{3}],
		\)
		in which case \(\xi=\f\);
	\end{enumerator}
	\begin{enumerator}[B]
	\item\label{thm:bounds:convex:hsmooth}%
		or \(h\) is \(\sigma_h\)-strongly convex and \(L_h\)-Lipschitz differentiable,
		\[
			|\beta|<\tfrac{\sigma_h}{2L_h}
		\quad\text{and}\quad
			\gamma\leq\tfrac{1}{L_{f,h}}\min\set{
				\tfrac{\sigma_h+2L_h\beta-c}{\sigma_h+2L_h},\,
				\tfrac{\sigma_h-2L_h\beta-c}{\sigma_h}
			},
		\]
		in which case \(\xi=(\nicefrac{L_h}{\gamma})\max\set{\beta,\alpha-\beta}\j\);
	\item\label{thm:bounds:convex:beta0}%
		or \(h\) is \(\sigma_h\)-strongly convex, \(\nabla f\) is \(L_f\)-Lipschitz,
		\(\beta=0\), and
		\(
			\gamma\leq\nicefrac{(\sigma_h-c)}{(\sigma_hL_{f,h}+2L_f)},
		\)
		in which case \(\xi=L_f\j\).
	\end{enumerator}
	\begin{proof}
		As motivated in the proof of \cref{thm:bounds:wc}, it suffices to prove the bounds on \(\gamma\).
		Similarly, the proof of assertion \ref{thm:bounds:convex:beta0} is an easy adaptation of that of \ref{thm:bounds:convex:hsmooth}.
		Setting \(p_{f,h}=0\) and \(p_{-f,h}=-1\) in \cref{thm:SD}, one has:
		\begin{proofitemize}
		\item\ref{thm:bounds:convex:fconv}~
			From \cref{thm:SD:fconv} we otain
			\(
				0
			{}<{}
				c
			{}={}
				1+2\beta-3\alpha
			\)
			and
			\(
				-\nicefrac{(1-3\alpha)}{2}
			{}<{}
				\beta
			{}\leq{}
				0
			\).
			Again by possibly allowing looser values of \(c\), the bound on \(\gamma=\nicefrac{\alpha}{L_{f,h}}\) as in \ref{thm:bounds:convex:fconv} is obtained.
		\item\ref{thm:bounds:convex:hsmooth}~
			The value of \(L_{\f}\) as in \cref{thm:fsmooth:hsmooth} reduces to \(\frac{L_h}{\gamma}\max\set{\beta,\alpha-\beta}\).
			Plugged into \cref{thm:SD:fsmooth} yields
			\(
				0
			{}<{}
				c
			{}={}
				(1-\alpha)\sigma_h
				{}-{}
				2L_h\max\set{\beta,\alpha-\beta}
			\),
			which can be loosened as
			\[
				\begin{cases}
					c
				{}\leq{}
					(1-\alpha)\sigma_h
					{}-{}
					2L_h\beta
				\\
					c
				{}\leq{}
					(1-\alpha)\sigma_h
					{}-{}
					2L_h(\alpha-\beta).
				\end{cases}
			\]
			In terms of \(\gamma=\nicefrac{\alpha}{L_{f,h}}\), this results in the bound of assertion \ref{thm:bounds:convex:hsmooth}.
		\qedhere
		\end{proofitemize}
	\end{proof}
\end{corollary}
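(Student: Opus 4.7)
The corollary is essentially a specialization of \cref{thm:SD} (and \cref{thm:bounds:wc}) to the convex case. By \cref{thm:p}, if \(f\) is convex we may take \(p_{f,h}=0\), and the constraint \(-1\in\set{p_{f,h},p_{-f,h}}\) then forces \(p_{-f,h}=-1\). The proof therefore reduces to substituting this particular choice of normalized moduli into the bounds coming from \cref{thm:SD:fconv} and \cref{thm:SD:fsmooth}, combined (in the Lipschitz settings) with the estimates of \(L_{\f}\) furnished by \cref{thm:fsmooth}. As in the proof of \cref{thm:bounds:wc}, the conditions on \(\beta\) are automatic from \(\gamma>0\) and \(c>0\), so it suffices to derive the bounds on \(\gamma\). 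Assertion \ref{thm:bounds:convex:beta0} is a direct specialization of \ref{thm:bounds:convex:hsmooth} with \(\beta=0\) and \(L_{\f}=L_f\) (via \cref{thm:fsmooth:beta0}), so only \ref{thm:bounds:convex:fconv} and \ref{thm:bounds:convex:hsmooth} require independent argument.

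\textbf{Item \ref{thm:bounds:convex:fconv}.} With \(p_{f,h}=0\), the convexity requirement \(\alpha p_{f,h}-\beta\geq 0\) of \cref{thm:SD:fconv} collapses to \(\beta\leq 0\). Plugging \(p_{-f,h}=-1\) into the formula for \(c\) gives \(c=1+2\beta-3\alpha>0\), whence \(\alpha\leq\nicefrac{(1+2\beta-c)}{3}\) after relaxing the equality (to allow looser choices of \(c\)). The two-sided bound \(-\nicefrac12<\beta\leq 0\) is recovered from \(\alpha>0\) and the constraint \(\beta>-\nicefrac{(1+3\alpha p_{-f,h})}{2}=-\nicefrac{(1-3\alpha)}{2}\); translating back via \(\gamma=\nicefrac{\alpha}{L_{f,h}}\) yields the stated bound.

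\textbf{Item \ref{thm:bounds:convex:hsmooth}.} Here the \(L_{\f}\) from \cref{thm:fsmooth:hsmooth}, under \(p_{f,h}=0\) and \(p_{-f,h}=-1\), becomes
\[
    L_{\f}
{}={}
    \tfrac{L_h}{\gamma}\max\set{\beta,\,\alpha-\beta}.
\]
Feeding this into \cref{thm:SD:fsmooth} yields
\[
    0<c=(1-\alpha)\sigma_h-2L_h\max\set{\beta,\,\alpha-\beta},
\]
which splits into the pair of linear inequalities
\(
    c\leq(1-\alpha)\sigma_h-2L_h\beta
\)
and
\(
    c\leq(1-\alpha)\sigma_h-2L_h(\alpha-\beta).
\)
Solving each for \(\alpha\) and rewriting in terms of \(\gamma=\nicefrac{\alpha}{L_{f,h}}\) yields precisely the minimum of the two fractions in the statement; the symmetric condition \(|\beta|<\nicefrac{\sigma_h}{2L_h}\) then emerges from requiring both numerators to be positive at \(\alpha=0\).

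\textbf{Expected obstacle.} No conceptual difficulty is expected; the proof is pure bookkeeping. The only slightly delicate point is correctly handling the \(\max\set{\beta,\alpha-\beta}\) term in part \ref{thm:bounds:convex:hsmooth}, where one must be careful to pass to a pair of sufficient conditions (rather than a single case-split on the sign of \(2\beta-\alpha\)) so that the resulting bound on \(\gamma\) is uniform in \(\beta\) and matches the ``min'' formulation given in the statement.
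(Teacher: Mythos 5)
Your proposal is correct and follows exactly the same route as the paper's proof: specialize \cref{thm:SD} (and the estimates of \(L_{\f}\) from \cref{thm:fsmooth}) by setting \(p_{f,h}=0\), \(p_{-f,h}=-1\), derive the bound on \(\gamma=\nicefrac{\alpha}{L_{f,h}}\) from the expression for \(c\), and obtain the Lipschitz-smooth cases by splitting the \(\max\set{\beta,\alpha-\beta}\) term into two linear inequalities, with \ref{thm:bounds:convex:beta0} as the \(\beta=0\) specialization. The bookkeeping matches in every detail.
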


Similarly, when \(f\) is concave (that is, \(-f\) is convex), then \(\sigma_{-f,h}=0\) can be considered, resulting in \(p_{f,h}=-1\) and \(p_{-f,h}=0\).
The proof is omitted, as it uses the same arguments as in the previous results.

\begin{corollary}[bounds when \(f\) is concave]\label{thm:bounds:concave}%
	Suppose that \cref{ass:basic} holds and that \(f\) is concave.
	All the claims of \cref{thm:SD} remain valid if \(\gamma>0\), \(\beta\in\R\) and \(c>0\) are such that
	\begin{enumerator}
	\item\label{thm:bounds:concave:fconv}%
		either
\(
		\nicefrac{(c-1)}{2}\leq\beta<0
		\)
		and
		\(	
		\gamma\leq-\nicefrac{\beta}{L_{f,h}},
		\)
		in which case \(\xi=\f\);
	\item\label{thm:bounds:concave:hsmooth}%
		or \(h\) is \(\sigma_h\)-strongly convex and \(L_h\)-Lipschitz differentiable,
		\[
			\tfrac{c-\sigma_h}{2L_h}\leq\beta<\tfrac{\sigma_h}{2L_h}
		\quad\text{and}\quad
			\gamma\leq\tfrac{1}{L_{f,h}}\tfrac{\sigma_h-2L_h\beta-c}{2L_h},
		\]
		in which case \(\xi=\frac{L_h}{\gamma}\max\set{\alpha+\beta,-\beta}\j\);
	\item\label{thm:bounds:concave:beta0}%
		or \(h\) is \(\sigma_h\)-strongly convex, \(f\) is \(L_f\)-Lipschitz differentiable, \(\beta=0\) and 
     \(
			\gamma\leq\nicefrac{(\sigma_h-c)}{(2L_f)},
    \)
		in which case \(\xi=L_f\j\).
	\end{enumerator}
\end{corollary}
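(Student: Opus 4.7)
My plan is to specialize \cref{thm:SD} to the concave case by invoking \cref{thm:p}: concavity of \(f\) allows the choice \(p_{-f,h}=0\) and then forces \(p_{f,h}=-1\). I would then follow verbatim the pattern used in the proofs of \cref{thm:bounds:wc,thm:bounds:convex}, i.e., substitute these values into the two scenarios of \cref{thm:SD} together with the Lipschitz modulus estimates of \cref{thm:fsmooth}. As in those earlier results, I only need to track the bounds on \(\gamma\), since the admissible range of \(\beta\) is dictated by imposing \(\gamma>0\), and I would treat \ref{thm:bounds:concave:beta0} as a routine specialization of \ref{thm:bounds:concave:hsmooth} at \(\beta=0\) with \(L_{\f}=L_f\) via \cref{thm:fsmooth:beta0}, so the genuine work lies in the first two items.

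For \ref{thm:bounds:concave:fconv} I would apply \cref{thm:SD:fconv}. The convexity condition \(\alpha p_{f,h}-\beta\ge 0\) becomes \(\beta\le -\alpha\), i.e., \(\gamma\le -\beta/L_{f,h}\), which in particular forces \(\beta<0\). The descent coefficient \(c=1+2\beta+3\alpha p_{-f,h}\) collapses to \(c=1+2\beta\); relaxing this equality to \(c\le 1+2\beta\) (allowing slightly looser values of \(c\) in exchange for a clean presentation) delivers the lower bound \(\beta\ge(c-1)/2\), yielding the announced interval together with \(\xi=\f\).

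For \ref{thm:bounds:concave:hsmooth} I would invoke \cref{thm:fsmooth:hsmooth}, which at \(p_{f,h}=-1\), \(p_{-f,h}=0\) reduces to \(L_{\f}=\frac{L_h}{\gamma}\max\set{\alpha+\beta,-\beta}\). Plugging this into the formula of \cref{thm:SD:fsmooth} produces
\[
0<c=\sigma_h-2L_h\max\set{\alpha+\beta,-\beta}.
\]
The key technical step, mirroring the one taken in \cref{thm:bounds:convex:hsmooth}, is to split this single inequality into the two linear constraints \(c\le \sigma_h+2L_h\beta\) and \(c\le \sigma_h-2L_h(\alpha+\beta)\). The first is purely a condition on \(\beta\), giving the lower bound \(\beta\ge(c-\sigma_h)/(2L_h)\); positivity of \(\gamma\) in the second enforces the upper bound \(\beta<\sigma_h/(2L_h)\); and rewriting the second in terms of \(\gamma=\alpha/L_{f,h}\) yields exactly the stated bound on \(\gamma\). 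The form of \(\xi=L_{\f}\j\) is then read off directly from the computed Lipschitz modulus.

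I do not expect any conceptual obstacle: the entire argument is routine substitution into \cref{thm:SD,thm:fsmooth}. The only minor subtlety, already faced in the convex counterpart \cref{thm:bounds:convex}, is the passage from a tight bound involving a \(\max\) to a pair of linear inequalities, which somewhat loosens the admissible parameter region in exchange for a transparent, easily verifiable statement.
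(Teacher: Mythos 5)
Your approach matches the paper's own (commented‑out) proof: set \(p_{f,h}=-1\) and \(p_{-f,h}=0\) in \cref{thm:SD}, use the Lipschitz modulus estimate of \cref{thm:fsmooth:hsmooth} for the second case, and loosen the equality defining \(c\) into a pair of linear inequalities (and \ref{thm:bounds:concave:beta0} is indeed a routine \(\beta=0\) specialization via \cref{thm:fsmooth:beta0}). One small correction to your preamble: the claim that ``the admissible range of \(\beta\) is dictated by imposing \(\gamma>0\)'', lifted from \cref{thm:bounds:wc,thm:bounds:convex}, does \emph{not} carry over to the concave case, and the paper explicitly flags this difference --- the lower bounds \(\beta\geq(c-1)/2\) and \(\beta\geq(c-\sigma_h)/(2L_h)\) follow not from \(\gamma>0\) but from the loosened \(c\)-constraints, which here involve only \(\beta\) and \(c\) (equivalently, from the existence of an admissible \(c>0\)). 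Your actual derivation obtains them correctly from the \(c\)-conditions, so this is a cosmetic inconsistency in the opening sentence rather than a gap in the argument.
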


	\section{Convergence analysis}\label{sec:convergence}%
		In this section we study the behavior of sequences generated by \BiFRB.
Although some basic convergence results can be derived in the full generality of \cref{ass:basic}, establishing local optimality guarantees of the limit point(s) will ultimately require an additional full domain assumption.

\begin{assumption}\label{ass:h}
	Function \(h\) has full domain, that is, \(C=\R^n\).
\end{assumption}

\Cref{ass:h} is standard for nonconvex splitting algorithms in a relative smooth setting.
To the best of our knowledge, the question regarding whether this requirement can be removed remains open; see, e.g., \cite{teboulle2018simplified} and the references therein.

		\subsection{Function value convergence}
			We begin with the convergence of merit function value.

\begin{theorem}[function value convergence of \BiFRB]\label{thm:subseq}%
	Let $\seq{x^k}$ be a sequence generated by \BiFRB* in the setting of \cref{thm:SD}.
	Then,
	\begin{enumerate}
	\item\label{thm:subseq:SD}%
		It holds that
		\begin{equation}\label{SD inequality}
			\LL\left(x^{k+1},x^k\right)\leq\LL\left(x^k,x^{k-1}\right)-\tfrac{c}{2\gamma}\D\left(x^{k+1},x^k\right)-\tfrac{c}{2\gamma}\D\left(x^k,x^{k-1}\right).
		\end{equation}
		Then, $\sum_{k=0}^\infty \D\left(x^k,x^{k-1}\right)<\infty$ and $\LL(x^{k},x^{k-1})\to\varphi^\star$ as $k\to\infty$ for some $\varphi^\star\geq\inf\cost$.
	\item\label{thm:subseq:bounded}%
		If \(\cost\) is level bounded, then \(\seq{x^k}\) is bounded.
	\item\label{thm:subseq:omega}%
		Suppose that \cref{ass:h} holds, and let \(\omega\) be the set of limit points of \(\seq{x^k}\).
		Then, \(\varphi\) is constant on \(\omega\) with value \(\varphi^\star\), and for every \(x^\star\in\omega\) it holds that \(x^\star\in\T(x^\star,x^\star)\) and \(0\in\hat\partial\varphi(x^\star)\).
	\end{enumerate}
	\begin{proof}~
		\begin{proofitemize}
		\item\ref{thm:subseq:SD}~
			Recall from \cref{thm:SD} that \eqref{SD inequality} holds and that \(\inf\LL=\inf\cost>-\infty\), from which the convergence of \(\seq{\LL(x^{k},x^{k-1})}\) readily follows.
			In turn, a telescoping argument on \eqref{SD inequality} shows that \(\sum_{k\in\N}\D(x^k,x^{k-1})<\infty\).
		\item\ref{thm:subseq:bounded}~
			It follows from \cref{thm:subseq:SD} that \(\LL\left(x^{k+1},x^k\right)\leq\LL(x^0,x^{-1})\) holds for every \(k\).
			Then boundedness of \(\seq{x^k}\) is implied by level boundedness of \(\LL\); see \cref{thm:SD:inf}.
		\item\ref{thm:subseq:omega}~
			Suppose that a subsequence \(\seq{x^{k_j}}[j\in\N]\) converges to a point \(x^\star\), then so do \(\seq{x^{k_j\pm1}}\) by \cref{thm:subseq:SD} and \cite[Prop. 2.2(iii)]{bauschke2003iterating}.
			Since \(x^{k_j+1}\in\T(x^{k_j},x^{k_j-1})\), by passing to the limit osc of \(\T\) (\cref{thm:T:osc}) implies that \(x^\star\in\T(x^\star,x^\star)\).
			Invoking \cref{thm:T:fix} yields the stationarity condition \(0\in\hat\partial\varphi(x^\star)\).
			Moreover, by continuity of \(\LL\) one has
			\[
				\varphi^\star
			{}\defeq{}
				\lim_{k\to\infty}\LL\left(x^k,x^{k-1}\right)
			{}={}
				\LL(x^\star,x^\star)
			{}={}
				\varphi(x^\star),
			\]
			where the last equality follows from \cref{thm:env:eq}, owing to the inclusion \(x^\star\in\T(x^\star,x^\star)\) (and the fact that \(\D_\psi(x,x)=0\) for any differentiable function \(\psi\)).
			From the arbitrarity of \(x^\star\in\omega\) we conclude that \(\varphi\equiv\varphi^\star\) on \(\omega\).
		\qedhere
		\end{proofitemize}
	\end{proof}
\end{theorem}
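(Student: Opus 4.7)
The plan is to unwind the three claims in order, since each builds on the previous one, and to rely heavily on the descent machinery of \cref{thm:SD} together with the continuity of \(\LL\) established in \cref{thm:env:C0}.

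For assertion \ref{thm:subseq:SD}, I would apply the inequality \eqref{eq:SD} of \cref{thm:SD:leq} at the triple \((\bar x,x,x^-)=(x^{k+1},x^k,x^{k-1})\), which is a licit choice because \(x^{k+1}\in\T(x^k,x^{k-1})\) by construction of \BiFRB. This gives \eqref{SD inequality} verbatim. Since \cref{thm:SD:inf} ensures \(\inf\LL=\inf\cost>-\infty\) (invoking \cref{ass:phi}), the sequence \(\seq{\LL(x^k,x^{k-1})}\) is monotone nonincreasing and bounded below, hence converges to some \(\varphi^\star\geq\inf\cost\). Telescoping \eqref{SD inequality} then yields \(\sum_{k\in\N}\D(x^k,x^{k-1})<\infty\).

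For assertion \ref{thm:subseq:bounded}, the monotone decrease gives \(\LL(x^{k+1},x^k)\leq\LL(x^0,x^{-1})\), so every pair \((x^{k+1},x^k)\) lies in a fixed sublevel set of \(\LL\). Level boundedness of \(\cost\) then transfers to \(\LL\) via \cref{thm:SD:inf}, and boundedness of \(\seq{x^k}\) follows.

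The subtle part is assertion \ref{thm:subseq:omega}. Given \(x^{k_j}\to x^\star\), I would first argue that \(x^{k_j-1}\to x^\star\) too: summability from \ref{thm:subseq:SD} gives \(\D(x^{k_j},x^{k_j-1})\to 0\), and then I would appeal to a Bregman-distance convergence lemma (e.g., \cite[Prop. 2.2(iii)]{bauschke2003iterating}) which, under \cref{ass:h}, extracts convergence of \(\seq{x^{k_j-1}}\) to the same limit from convergence of the \(\D\)-values and of \(\seq{x^{k_j}}\). Here the hypothesis that \(h\) has full domain is crucial, since otherwise the Bregman distance need not separate boundary behavior; this is the main obstacle in the argument. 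Once \(x^{k_j-1}\to x^\star\), passing to the limit in \(x^{k_j+1}\in\T(x^{k_j},x^{k_j-1})\) and using osc of \(\T\) from \cref{thm:T:osc}, together with the fact that \(\seq{x^{k_j+1}}\) also converges to \(x^\star\), yields \(x^\star\in\T(x^\star,x^\star)\). Then \cref{thm:T:fix} gives \(0\in\hat\partial\varphi(x^\star)\). Finally, continuity of \(\LL\) from \cref{thm:env:C0} ensures
\[
\varphi^\star=\lim_{j\to\infty}\LL(x^{k_j+1},x^{k_j})=\LL(x^\star,x^\star),
\]
and from the identity in \cref{thm:env:eq} with \(\bar x=x=x^-=x^\star\) the Bregman terms vanish, leaving \(\LL(x^\star,x^\star)=\varphi(x^\star)\). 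As \(x^\star\in\omega\) was arbitrary, \(\varphi\equiv\varphi^\star\) on \(\omega\).
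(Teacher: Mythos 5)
Your proposal is correct and follows essentially the same route as the paper's proof: apply \eqref{eq:SD} along the iterates for the descent and summability, use \cref{thm:SD:inf} for boundedness, and for the limit-point analysis use summability plus the Bregman-distance lemma \cite[Prop.\ 2.2(iii)]{bauschke2003iterating} to get \(x^{k_j\pm1}\to x^\star\), then osc of \(\T\), \cref{thm:T:fix}, and continuity of \(\LL\) with \cref{thm:env:eq}. The only cosmetic difference is that you invoke the Bregman lemma explicitly for \(x^{k_j-1}\) and merely assert it for \(x^{k_j+1}\), whereas the paper handles both in one stroke, but the underlying argument is identical.
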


It is now possible to demonstrate the necessity of some of the bounds on the stepsize that were discussed in \cref{sec:bounds}, by showing that \(\D(x^{k+1},x^k)\) may otherwise fail to vanish.
Note that, for \(\beta=0\), the following counterexample constitutes a tightness certificate for the bound \(\gamma<\nicefrac{1}{3L_f}\) derived in \cite{wang2022malitsky} in the noninertial Euclidean case.

\begin{example}\label{sharp stepsize example}%
	The bound \(\alpha=\gamma L_{f,h}<\nicefrac{(1+2\beta)}{3}\) is tight even in the Euclidean case.
	To see this, consider \(g=\indicator_{\set{\pm1}}\) and for a fixed \(L>0\) let \(f(x)=Lh(x)=\frac L2x^2\).
	Then, one has \(L_{f,h}=\sigma_{f,h}=L\) and \(\sigma_{-f,h}=-L\).
	For \(\gamma<\nicefrac{1}{L}=\nicefrac{1}{[\sigma_{-f,h}]_-}\), it is easy to see that
	\[
		\T(x,x^-)
	{}={}
		\sign\bigl(
			\nabla\h(x)-\nabla\f(x)+\nabla\f(x^-)
		\bigr)
	{}={}
		\sign\bigl(
			(1-2\alpha+\beta)x+(\alpha-\beta)x^-
		\bigr)
	\]
	(with \(\sign 0\coloneqq\set{\pm1}\)).
	If \(\alpha\geq\nicefrac{(1+2\beta)}{3}\), then \(\seq{(-1)^k}\) is a sequence generated by \BiFRB\ for which \(\D(x^{k+1},x^k)\equiv2\not\to0\).
\end{example}

As a consequence of \cref{thm:subseq:SD}, the condition \(\D(x^{k+1},x^k)\leq\varepsilon\) is satisfied in finitely many iterations for any tolerance \(\varepsilon>0\).
While this could be used as termination criterion, in the generality of \cref{ass:basic,ass:h} there is no guarantee on the relaxed stationarity measure \(\dist(0,\hat\partial\varphi(x^{k+1}))\), which through \cref{thm:T:partial} can only be estimated as
\begin{equation}\label{eq:subgrad}
	\dist(0,\hat\partial\varphi(x^{k+1}))
	{}\leq{}
	\|v^{k+1}\|
~~\text{with}~~
	v^{k+1}\coloneqq\nabla\h(x^k)-\nabla\h(x^{k+1})-\nabla\f(x^k)+\nabla\f(x^{k-1}).
\end{equation}
On the other hand, in accounting for possibly unbounded sequences, additional assumptions are needed for the condition \(\|v^{k+1}\|\leq\varepsilon\) to be met in finitely many iterations.

\begin{lemma}[termination criteria]\label{thm:termination}%
	Suppose that \cref{ass:h} holds, and let \(\seq{x^k}\) be a sequence generated by \BiFRB* in the setting of \cref{thm:SD}.
	If
	\begin{enumerator}
	\item
		either \(\varphi\) is level bounded,
	\item
		or \(\conj h\) is uniformly convex (equivalently, \(h\) is uniformly smooth),
	\end{enumerator}
	then, for \(v^{k+1}\) as in \eqref{eq:subgrad} it holds that \(v^{k+1}\to0\).
	Thus, for any \(\varepsilon>0\) the condition \(\|v^{k+1}\|\leq\varepsilon\) is satisfied for all \(k\) large enough and guarantees \(\dist(0,\hat\partial\varphi(x^{k+1}))\leq\varepsilon\).
	\begin{proof}
		The implication of \(\|v^{k+1}\|\leq\varepsilon\) and \(\varepsilon\)-stationarity of \(x^{k+1}\) has already been discussed.
		If \(\varphi\) is level bounded, then \cref{thm:subseq} implies that \(\seq{x^k}\) is bounded and \(v^{k+1}\to0\) holds by continuity of \(\nabla\h\) and \(\nabla\f\).
		In case \(\conj h\) is uniformly convex, this being equivalent to uniform smoothness of \(h\) as shown in \cite[Cor. 2.8]{aze1995uniformly}, the vanishing of \(\D_{\conj h}(\nabla h(x^k),\nabla h(x^{k+1}))=\D(x^{k+1},x^k)\) implies through \cite[Prop. 4.13(IV)]{reem2019re} that \(\|\nabla h(x^k)-\nabla h(x^{k+1})\|\to0\).
		In turn, by relative smoothness necessarily \(\|\nabla\f(x^{k+1})-\nabla\f(x^k)\|\to0\) as well, overall proving that \(v^{k+1}\to0\).
	\end{proof}
\end{lemma}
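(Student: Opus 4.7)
The argument splits according to the two hypotheses, both building on the summability \(\sum_{k}\D(x^k,x^{k-1})<\infty\) already furnished by \Cref{thm:subseq:SD}. In particular \(\D(x^{k+1},x^k)\to 0\), and the closing assertion on the stationarity measure is the direct implication in \eqref{eq:subgrad}; it thus suffices to prove \(v^{k+1}\to 0\) in each of the two cases, after which the $\varepsilon$-tolerance claim is immediate.

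Under hypothesis~(i), \Cref{thm:subseq:bounded} supplies boundedness of \(\seq{x^k}\), so every iterate lies in some compact set \(K\subset C=\R^n\). I would first upgrade \(\D(x^{k+1},x^k)\to 0\) into \(\|x^{k+1}-x^k\|\to 0\) by a standard extraction-and-contradiction argument: a failure would yield subsequences \((x^{k_j},x^{k_j+1})\to(a,b)\) with \(a\neq b\), and joint continuity of the Bregman distance on \(K\times K\) together with strict convexity of \(h\) would then force \(0=\lim_{j}\D(x^{k_j+1},x^{k_j})=\D(b,a)>0\). Applying the same argument to \(\D(x^k,x^{k-1})\) and then invoking continuity of \(\nabla\h\) and \(\nabla\f\) on \(K\) delivers \(v^{k+1}\to 0\) directly from the explicit expression in \eqref{eq:subgrad}.

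Under hypothesis~(ii) the iterates need not be bounded, so I would work in the dual space via the Legendre identity \(\D_h(y,x)=\D_{\conj h}(\nabla h(x),\nabla h(y))\), which rewrites the summable quantity as \(\D_{\conj h}(\nabla h(x^k),\nabla h(x^{k+1}))\to 0\). Combining \cite[Cor.~2.8]{aze1995uniformly} (equivalence of uniform convexity of \(\conj h\) with uniform smoothness of \(h\)) with \cite[Prop.~4.13(IV)]{reem2019re} (which guarantees that uniformly convex Bregman distances control the norm of the difference of their arguments) yields \(\|\nabla h(x^k)-\nabla h(x^{k+1})\|\to 0\). Since \(\f=f-\tfrac{\beta}{\gamma}h\), the \(h\)-contribution to the increments of \(\nabla\f\) is already handled, while the remaining \(\nabla f\)-contribution I would extract by re-applying the same uniform-convexity machinery to the pair of auxiliary convex kernels \(L_{f,h}h\pm f\) produced by relative smoothness. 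Substituting back into \eqref{eq:subgrad} then gives \(v^{k+1}\to 0\).

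\textbf{Main obstacle.} The delicate point is precisely this last propagation in case~(ii): relative smoothness by itself only produces the duality-pairing bound \(|\langle\nabla f(x)-\nabla f(y),x-y\rangle|\leq L_{f,h}(\D(x,y)+\D(y,x))\), not a direct norm comparison between \(\nabla f\)-differences and \(\nabla h\)-differences. The clean way to close the gap is to observe that the Legendre kernels \(L_{f,h}h\pm f\) inherit uniform smoothness from \(h\) through their convexity and the Legendre conjugate pairing, so that the combined \cite{aze1995uniformly}/\cite{reem2019re} argument applies to each and the two resulting one-sided estimates isolate \(\nabla f\). Once this step is secured, cases~(i) and~(ii) share the same skeleton --- reduce to vanishing of Bregman distances, translate to vanishing of gradient differences, then read \(v^{k+1}\to 0\) off \eqref{eq:subgrad} --- and the remainder is routine bookkeeping.
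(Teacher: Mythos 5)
Your argument is essentially the paper's own: the same reduction to vanishing Bregman distances, the same compactness/continuity argument in case~(i), and the same conjugate-duality mechanism in case~(ii) via \cite[Cor.~2.8]{aze1995uniformly} and \cite[Prop.~4.13(IV)]{reem2019re}. The one step the paper leaves implicit --- ``by relative smoothness necessarily \(\|\nabla\f(x^{k+1})-\nabla\f(x^k)\|\to0\)'' --- you correctly flag and close by applying the same machinery to the convex functions \(L_{f,h}h\pm f\), which inherit uniform smoothness from \(h\) because \(\D_{L_{f,h}h\pm f}\leq 2L_{f,h}\D\); this is exactly the justification the paper's terse phrasing presupposes. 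A minor terminological slip: \(L_{f,h}h\pm f\) need be neither strictly convex nor \(1\)-coercive and hence are not literally Legendre kernels, but since all your argument uses is convexity plus uniform smoothness (so that each conjugate is uniformly convex), the reasoning goes through unchanged.
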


		\subsection{Global convergence}
			In this subsection, we work towards the global sequential convergence of \BiFRB. To this end, we introduce a key concept which will be useful soon.
For $\eta\in(0,\infty]$, denote by $\Psi_\eta$ the class of functions $\psi:[0,\eta)\rightarrow\R_+$ satisfying the following: (i) $\psi(t)$ is right-continuous at $t=0$ with $\psi(0)=0$; (ii) $\psi$ is strictly increasing on $[0,\eta)$. 

\begin{definition}[{\cite[Def. 5]{wang2022exact}}]\label{def: GCKL}%
	Let $\func{f}{\R^n}{\Rinf}$ be proper and lsc.
	Let $\bar x\in\dom\partial f$ and $\mu\in\R$, and let $V\subseteq\dom\partial f$ be a nonempty subset.
	\begin{enumerate}
	\item
		We say that $f$ has the pointwise generalized concave Kurdyka-\L ojasiewicz~(KL) property at $\bar x\in\dom\partial f$, if there exist a neighborhood $U\ni\bar x$, $\eta\in(0,\infty]$ and a concave $\psi\in\Psi_\eta$, such that for all $x\in U\cap[0<f-f(\bar x)<\eta]$,
		\[
			\psi'_-\bigl(f(x)-f(\bar x)\bigr)\cdot\dist\bigl(0,\partial f(x)\bigr)\geq1,
		\]
		where $\psi_-'$ denotes the left derivative.
		Moreover, $f$ is a generalized concave KL function if it has the generalized concave KL property at every $x\in\dom\partial f$.
	\item
		Suppose that $f(x)=\mu$ on $V$.
		We say $f$ has the setwise\footnote{%
			We shall omit adjectives ``pointwise'' and ``setwise'' whenever there is no ambiguity.%
		}
		generalized concave KL property on $V$ if there exist $U\supset V$, $\eta\in(0,\infty]$ and a concave $\psi\in\Psi_\eta$ such that for every $x\in U\cap[0<f-\mu<\eta]$,
		\[
			\psi'_-\bigl(f(x)-\mu\bigr)\cdot\dist\bigl(0,\partial f(x)\bigr)\geq1.
		\]
	\end{enumerate}
\end{definition}

For a subset $\Omega\subseteq\R^n$, define $(\forall \varepsilon>0)$ $\Omega_\varepsilon=\set{x\in\R^n}[\dist(x,\Omega)<\varepsilon]$. 

\begin{lemma}[{{\cite[Lem. 4.4]{wang2022exact}}}]\label{lemma: Uniformize GCKL}%
	Let $\func{f}{\R^n}{\Rinf}$ be proper lsc and let $\mu\in\R$.
	Let $\Omega\subseteq\dom\partial f$ be a nonempty compact set on which $f(x)=\mu$ for all $x\in\Omega$.
	Then the following hold:
	\begin{enumerate}
	\item
		Suppose that $f$ satisfies the pointwise generalized concave KL property at each $x\in\Omega$.
		Then there exist $\varepsilon>0,\eta\in(0,\infty]$ and $\psi\in\Psi_\eta$ such that $f$ has the setwise generalized concave KL property on $\Omega$ with respect to $U=\Omega_\varepsilon$, $\eta$ and $\psi$.
	\item \label{uniformize exact modulus}%
		Set $U=\Omega_\varepsilon$ and define $h:(0,\eta)\rightarrow\R_+$ by
		\[
			h(s)=\sup\set{\dist^{-1}\left(0,\partial f(x)\right)}[{x\in U\cap[s\leq f-\mu<\eta]}].
		\]
		Then the function $\func{\tilde\psi}{[0,\eta)}{\R_+}$ defined by \((\forall 0<t<\eta)~\tilde\psi(t)=\int_0^th(s)ds\) with \(\tilde{\psi}(0)=0\) is well defined and belongs to $\Psi_\eta$.
		The function $f$ has the setwise generalized concave KL property on $\Omega$ with respect to $U$, $\eta$ and $\tilde\psi$.
		Moreover,
		\[
			\tilde{\psi}
		{}={}
			\inf\left\{
				\psi\in\Psi_\eta
				\,\Big|\,
				\text{\begin{tabular}{@{}c@{}}%
						\(\psi\) is a concave desingularizing function\\
						of $f$ on $\Omega$ with respect to $U$ and $\eta$
				\end{tabular}}
			\right\}.
		\]
	\end{enumerate}
	We say that $\tilde{\psi}$ is the exact modulus of the setwise generalized concave KL property of \(f\) on \(\Omega\) with respect to $U$ and $\eta$.
\end{lemma}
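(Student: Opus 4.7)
The natural route is the classical uniformization argument via a finite covering. For each $\bar x\in\Omega$, the pointwise generalized concave KL property furnishes a neighborhood $U_{\bar x}$, an upper level $\eta_{\bar x}>0$, and a concave $\psi_{\bar x}\in\Psi_{\eta_{\bar x}}$ such that
\[
	\psi_{\bar x,-}'\bigl(f(x)-\mu\bigr)\cdot\dist\bigl(0,\partial f(x)\bigr)\geq 1
	\qquad\forall x\in U_{\bar x}\cap[0<f-\mu<\eta_{\bar x}].
\]
By compactness of $\Omega$ I extract a finite subcover $\{U_{x_i}\}_{i=1}^N$, and by a standard Lebesgue-number argument I pick $\varepsilon>0$ small enough that $\Omega_\varepsilon\subseteq\bigcup_{i=1}^N U_{x_i}$. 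Set $\eta\coloneqq\min_i\eta_{x_i}>0$ and $\psi\coloneqq\sum_{i=1}^N\psi_{x_i}$, which is still concave and in $\Psi_\eta$. For any $x\in\Omega_\varepsilon\cap[0<f-\mu<\eta]$ there is an index $i$ with $x\in U_{x_i}$, and since $\psi'_-$ of a sum of concave functions equals the sum of the $\psi'_{x_j,-}$'s (all nonnegative), I obtain $\psi'_-(f(x)-\mu)\geq\psi_{x_i,-}'(f(x)-\mu)\geq\dist^{-1}(0,\partial f(x))$, which is exactly the setwise KL inequality on $U=\Omega_\varepsilon$.

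\textbf{Plan for item (ii), well-definedness of $\tilde\psi$.} I use the $\psi$ just produced: setwise KL gives $\dist^{-1}(0,\partial f(x))\leq\psi'_-(f(x)-\mu)$ on $U\cap[0<f-\mu<\eta]$. Since $\psi$ is concave, $\psi'_-$ is nonincreasing, and taking the supremum defining $h(s)$ yields $h(s)\leq\psi'_-(s)$ for all $s\in(0,\eta)$. Because $\int_0^t\psi'_-(s)\,ds=\psi(t)<\infty$ for $t<\eta$, the same integrability transfers to $h$, so $\tilde\psi(t)=\int_0^t h(s)\,ds$ is finite on $[0,\eta)$. The map $s\mapsto h(s)$ is nonincreasing (as $s\downarrow0$ the supremum is taken over an enlarging set), whence $\tilde\psi$ is concave; it vanishes at $0$ by construction and is right-continuous there by absolute continuity of the integral. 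Strict monotonicity follows since $h(s)>0$ on $(0,\eta)$, for $\dist(0,\partial f(x))$ is finite at each $x\in U\cap[0<f-\mu<\eta]$ which is nonempty for every small enough $s$ (one approaches $\Omega$ along directions where $f-\mu\to 0^+$). Thus $\tilde\psi\in\Psi_\eta$.

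\textbf{Plan for the KL property with $\tilde\psi$ and the infimum characterization.} To verify setwise KL with $\tilde\psi$, fix $x\in U\cap[0<f-\mu<\eta]$ and observe
\[
	\tilde\psi'_-\bigl(f(x)-\mu\bigr)
	=\lim_{s\uparrow f(x)-\mu}h(s)
	\geq h\bigl(f(x)-\mu\bigr)
	\geq\dist^{-1}\bigl(0,\partial f(x)\bigr),
\]
where the first inequality is monotonicity of $h$ and the second is the very definition of $h$. Hence $\tilde\psi$ is a valid desingularizing function. For the infimum property, let $\psi\in\Psi_\eta$ be any concave desingularizing function for $f$ on $\Omega$ with respect to $U$ and $\eta$. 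For each $s\in(0,\eta)$ and each admissible $x$ with $f(x)-\mu\geq s$, monotonicity of $\psi'_-$ gives $\psi'_-(s)\geq\psi'_-(f(x)-\mu)\geq\dist^{-1}(0,\partial f(x))$, so taking the supremum yields $\psi'_-(s)\geq h(s)$. Integrating from $0$ to $t$ (using $\psi(0)=0=\tilde\psi(0)$) gives $\psi(t)\geq\tilde\psi(t)$ pointwise, proving $\tilde\psi$ is the pointwise infimum.

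\textbf{Main obstacle.} The delicate point is the analytic handling of $h$: since $h$ may have jump discontinuities, one must consistently work with \emph{left} derivatives and exploit the monotonicity of $h$ (derived from the set monotonicity $[s\leq f-\mu<\eta]\supseteq[s'\leq f-\mu<\eta]$ for $s\leq s'$) to absorb the gap between $\tilde\psi'_-(t)$ and $h(t)$. Once the monotonicity of $h$ is in hand, the rest of the argument is essentially a Lebesgue-type passage between a concave function and its (left) derivative.
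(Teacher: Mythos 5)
The paper does not prove this lemma --- it is taken verbatim from \cite[Lem.\ 4.4]{wang2022exact} --- so there is no internal proof to compare against; I therefore assess your reconstruction on its own merits. Your argument follows the standard uniformization template: a finite subcover of $\Omega$ with a Lebesgue-number shrink to get $U=\Omega_\varepsilon$, the sum $\psi=\sum_i\psi_{x_i}$ as a common concave desingularizer (part (i)), and then the integral $\tilde\psi(t)=\int_0^t h$ of the pointwise-optimal residual function $h$ as the smallest admissible desingularizer (part (ii)). The key estimates are all correct: $h\leq\psi'_-$ gives integrability of $h$; $h$ nonincreasing gives concavity of $\tilde\psi$; the identity $\tilde\psi'_-(t)=h(t^-)\geq h(t)$ for nonincreasing $h$ gives the KL inequality for $\tilde\psi$; and $\psi'_-\geq h$ for any competitor gives $\psi\geq\tilde\psi$ upon integrating from $0$.

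The one genuine gap is strict monotonicity of $\tilde\psi$ on all of $[0,\eta)$, which membership in $\Psi_\eta$ requires. You need $h(s)>0$ for \emph{every} $s\in(0,\eta)$, but your argument (approach $\Omega$ along $f-\mu\to 0^+$) only delivers $h(s)>0$ for $s$ near $0$. If $U\cap[s_0\leq f-\mu<\eta]$ is empty, or meets $\dom\partial f$ nowhere, for some $s_0<\eta$ --- which happens, for instance, whenever $f-\mu$ is bounded on $U$ by a value strictly below $\eta$ --- then $h\equiv 0$ on $[s_0,\eta)$, $\tilde\psi$ is constant there, and $\tilde\psi\notin\Psi_\eta$. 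This also invalidates the infimum characterization as an equality, since $\tilde\psi$ is then not an admissible competitor and the argument only gives $\tilde\psi\leq\inf\{\psi\}$. The fix is to further shrink $\eta$ after part (i) so that $U\cap[s\leq f-\mu<\eta]\cap\dom\partial f\neq\emptyset$ for every $s\in(0,\eta)$; your proof should state this adjustment explicitly rather than asserting the conclusion from the behavior near $s=0$.
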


In the remainder of the section, we will make use of the norm \(\norm*{{}\cdot{}}\) on the product space \(\R^n\times\R^n\) defined as \(\norm*{(x,y)}=\|x\|+\|y\|\).

\begin{theorem}[sequential convergence of \BiFRB]\label{thm:sequential convergence}%
	Suppose that \cref{ass:h} holds, and let $\seq{x^k}$ be a sequence generated by \BiFRB* in the setting of \cref{thm:SD}.
	Define $(\forall k\in\N)$~$z^k=(x^{k+1},x^k,x^{k-1})$ and let $\omega(z^0)$ be the set of limit points of \((z_k)_{k\in\N}\).
	Define
	\[
		(\forall \omega,x,x^-\in\R^n )~\F(\omega,x,x^-)=\M(\omega;x,x^-)+\tfrac{c}{2\gamma}\D(x,x^-)+D_{\xi}(x,x^-).
	\]
	Assume in addition the following:
	\begin{enumeratass}
	\item\label{bounded}%
		\(\varphi\) is level bounded.
	\item\label{c2 and pd}%
		$f,h$ are twice continuously differentiable and $\nabla^2h$ is positive definite.
	\item\label{GCKL}%
		$(\exists r>0)~(\exists\eta\in[0,\infty))$ $\F$ satisfies the generalized concave KL property on $\Omega=\omega(z^0)$ with respect to $U=\Omega_r$ and $\eta$.
	\end{enumeratass}
	Then \(\sum_{k=0}^\infty\norm{x^{k+1}-x^k}<\infty\) and there exists $x^\star$ with $0\in\hat\partial\varphi(x^{\star})$ such that \(x^k\to x^\star\) as \(k\to\infty\).
	To be specific, there exists $k_0\in\N$ such that for all $l\geq k_0+1$
	\begin{equation}\label{finite length}
		\sum_{k=0}^\infty\norm{x^{k+1}-x^k}
	{}\leq{}
		\sum_{k=0}^l\norm{x^{k+1}-x^k}
		{}+{}
		\tfrac{4\gamma M}{c\sigma}
		\tilde\psi\left(\LL(x^{l},x^{l-1})-\varphi^\star\right),
	\end{equation}
	where $M>0$ is some constant, $\sigma>0$ is the strong convexity modulus of $h$ on $\mathcal{B}$, the closed ball in which $\seq{x^k}$ lies,
	and $\tilde{\psi}$ is the exact modulus of the generalized concave KL property associated with $\F$ produced by \cref{uniformize exact modulus}.
\end{theorem}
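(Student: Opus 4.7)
The plan is to follow the Attouch--Bolte--Svaiter roadmap \cite{attouch2013convergence}, adapted to the product iterate $z^k$ and the merit function $\F$. The pivotal observation is that $x^{k+1}\in\T(x^k,x^{k-1})$ forces $\M(x^{k+1};x^k,x^{k-1})=\env(x^k,x^{k-1})$, so $\F(z^k)=\LL(x^k,x^{k-1})$, and the descent inequality \eqref{SD inequality} of \cref{thm:SD} becomes a genuine decrease of $\F$ along $\seq{z^k}$:
\[
    \F(z^{k+1})\;\leq\;\F(z^k)\;-\;\tfrac{c}{2\gamma}\D(x^{k+1},x^k)\;-\;\tfrac{c}{2\gamma}\D(x^k,x^{k-1}).
\]
Level-boundedness of $\varphi$ together with \cref{thm:subseq} confines $\seq{x^k}$ to a compact ball $\mathcal{B}$, on which $f,h\in\C^2$ and $\nabla^2 h\succ 0$ yield strong convexity of $h$ with some modulus $\sigma>0$; hence $\D(x,y)\geq\tfrac{\sigma}{2}\|x-y\|^2$ on $\mathcal{B}\times\mathcal{B}$. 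Every limit point has the form $z^\star=(x^\star,x^\star,x^\star)$ with $x^\star\in\T(x^\star,x^\star)$, and \cref{thm:subseq:omega} combined with $\D_\psi(x,x)=0$ yields $\F(z^\star)=\varphi(x^\star)=\varphi^\star$. So $\F\equiv\varphi^\star$ on $\Omega$ and $\F(z^k)\searrow\varphi^\star$.

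The second ingredient is a subgradient bound for $\F$. Since $f,h\in\C^2$, the function $\F$ is the sum of $g(\omega)$ and a $\C^1$ term in $(\omega,x,x^-)$, so by the standard calculus rule
\[
    \partial\F(z^k)\;\supseteq\;\bigl(\partial g(x^{k+1})+\nabla_\omega G(z^k)\bigr)\times\nabla_x G(z^k)\times\nabla_{x^-}G(z^k),
\]
where $G$ collects the smooth part of $\F$. The subgradient of $g$ furnished by the optimality condition in \cref{thm:T:partial} exactly cancels $\nabla_\omega G(z^k)$, producing an element of $\partial\F(z^k)$ whose first block is zero. The remaining two blocks are continuous functions of $(\omega,x,x^-)$ that vanish on the diagonal $\omega=x=x^-$; boundedness of $\nabla^2 f,\nabla^2 h$ on $\mathcal{B}$ then yields a uniform constant $M>0$ such that
\[
    \dist\bigl(0,\partial\F(z^k)\bigr)\;\leq\;M\bigl(\|x^{k+1}-x^k\|+\|x^k-x^{k-1}\|\bigr).
\]

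With both ingredients in hand, I apply the uniformized KL property of \cref{lemma: Uniformize GCKL}: since $\dist(z^k,\Omega)\to 0$ (a consequence of boundedness together with $\D(x^{k+1},x^k)\to 0$ and strong convexity of $h$ on $\mathcal{B}$) and $\F(z^k)\downarrow\varphi^\star$, for all $k\geq k_0$ large enough $z^k\in\Omega_r\cap[0<\F-\varphi^\star<\eta]$. Concavity of $\tilde\psi$ together with the subgradient bound then gives
\[
    \tilde\psi\bigl(\F(z^k)-\varphi^\star\bigr)-\tilde\psi\bigl(\F(z^{k+1})-\varphi^\star\bigr)\;\geq\;\frac{\F(z^k)-\F(z^{k+1})}{M\bigl(\|x^{k+1}-x^k\|+\|x^k-x^{k-1}\|\bigr)}.
\]
Chaining this with the quadratic decrease $\F(z^k)-\F(z^{k+1})\geq\tfrac{c\sigma}{4\gamma}(\|x^{k+1}-x^k\|^2+\|x^k-x^{k-1}\|^2)$ and the elementary inequality $2ab\leq a^2+b^2$ yields, after a standard rearrangement,
\[
    2\|x^{k+1}-x^k\|\;\leq\;\|x^k-x^{k-1}\|+\tfrac{4\gamma M}{c\sigma}\bigl[\tilde\psi(\F(z^k)-\varphi^\star)-\tilde\psi(\F(z^{k+1})-\varphi^\star)\bigr].
\]
Summing over $k\geq k_0$ produces \eqref{finite length}, hence Cauchyness and convergence of $\seq{x^k}$ to some $x^\star$; stationarity $0\in\hat\partial\varphi(x^\star)$ is inherited from \cref{thm:subseq:omega}.

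The main obstacle I expect is the subgradient estimate: one must carefully invoke the correct nonsmooth calculus rule for $\partial\F$ (where $g$ enters only in the first slot), exhibit the subgradient coming from \cref{thm:T:partial}, and extract the explicit constant $M$ from the $\C^2$ bounds on $\mathcal{B}$ so that the $M$ produced here matches the one in \eqref{finite length}. Everything else — level-set confinement, the equality $\F\equiv\varphi^\star$ on $\Omega$, and the KL telescoping — is by now routine, although care is needed to absorb finitely many initial iterates lying outside the KL neighborhood into the constant prefix $\sum_{k=0}^l\|x^{k+1}-x^k\|$ appearing in \eqref{finite length}.
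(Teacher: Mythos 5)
Your proof follows essentially the same Attouch--Bolte--Svaiter/KL roadmap as the paper: identify $\F(z^k)=\LL(x^k,x^{k-1})$, invoke the sufficient-decrease inequality \eqref{SD inequality}, establish the subgradient estimate $\dist(0,\partial\F(z^k))\leq M\bigl(\|x^{k+1}-x^k\|+\|x^k-x^{k-1}\|\bigr)$ by cancelling the $g$-subgradient from \cref{thm:T:partial} in the first block (the paper makes $u^k,v^k$ explicit, but your description is correct in substance), apply the uniformized KL property of \cref{lemma: Uniformize GCKL}, and telescope. The structure and all the key ingredients match.

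There is, however, a concrete error in your final rearrangement. Writing $a=\|x^{k+1}-x^k\|$, $b=\|x^k-x^{k-1}\|$, and $C=\tfrac{4\gamma M}{c\sigma}$, the chain of KL $+$ descent gives $C\Delta_{k,k+1}(a+b)\geq a^2+b^2$, and combining with $a^2+b^2\geq\tfrac12(a+b)^2$ (your ``$2ab\leq a^2+b^2$'') yields $a+b\leq 2C\Delta_{k,k+1}$, which is precisely the inequality the paper writes and sums (the paper drops the extra factor of $2$ because it uses $\D\geq\sigma\|\cdot\|^2$ in place of $\tfrac{\sigma}{2}\|\cdot\|^2$; this is a convention, not a substance difference). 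But your claimed rearrangement $2a\leq b+C\Delta_{k,k+1}$ does \emph{not} follow: it would require $\tfrac{a^2+b^2}{a+b}\geq 2a-b$, equivalently $(a-b)(a+2b)\leq 0$, which fails whenever $a>b$. The single-step ABS Young trick (which produces $2\|\Delta x^{k+1}\|\leq\|\Delta x^k\|+\text{const}\cdot\Delta_k$) does not transfer here because the subgradient bound at $z^k$ already contains \emph{both} increments $a$ and $b$, so there is nothing to split by AM--GM. The fix is to skip the Young-style rearrangement altogether and sum the symmetric bound $a_k+a_{k-1}\leq 2C\Delta_{k,k+1}$ directly over $k\geq l+1$ (using $\sum_{k\geq l+1}a_k\leq\sum_{k\geq l+1}(a_k+a_{k-1})$ and the telescope $\sum_{k\geq l+1}\Delta_{k,k+1}=\tilde\psi(\delta_{l+1}-\varphi^\star)\leq\tilde\psi(\delta_l-\varphi^\star)$), which reproduces \eqref{finite length} exactly and avoids the spurious $\|x^{k_0}-x^{k_0-1}\|$ additive term that your rearrangement would leave behind.
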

\begin{proof}
	Set \((\forall k\in\N)\) $\delta_{k}=\F(z^k)$ for simplicity.
	Then $\delta_k=\LL(x^k,x^{k-1})$, $\delta_{k}\to\varphi^\star$ decreasingly and $\dist(x^k,\omega)\to0$ as $k\to\infty$ by invoking \cref{thm:subseq}.
	Assume without loss of generality that \((\forall k\in\N)\) $\delta_{k}>\varphi^\star$, otherwise we would have $(\exists k_0\in\N)$ $x^{k_0}=x^{k_0+1}$ due to \cref{thm:subseq:SD}, from which the desired result readily follows by simple induction.
	Thus $(\exists k_0\in\N)$ $(\forall k\geq k_0)$ $z^k\in\Omega_r\cap[0<\F-\varphi^\star<\eta]$.
	Appealing to \cref{thm:subseq:omega} and \cref{thm:M:eq} yields that $\F$ is constantly equal to $\varphi^\star$ on the compact set $\omega(z^0)$.
	In turn, all conditions in \cref{uniformize exact modulus} are satisfied, which implies that for $k\geq k_0$
	\begin{equation}\label{formula: KL inequality of F}
		1\leq(\tilde\psi)'_-\left(\delta_{k}-\varphi^\star\right)\dist\left(0,\partial\F(z^k)\right).
	\end{equation}
	Define $(\forall k\in\N)$
	\begin{align*}
		u^k={}
	&
		\nabla^2\h(x^k)(x^k-x^{k+1})+\nabla^2\f(x^k)(x^{k+1}-x^k)+\tfrac{c}{2\gamma}\bigl(\nabla h(x^k)-\nabla h(x^{k-1})\bigr)
	\\
	&
		+\nabla\f(x^{k-1})-\nabla\f(x^k)+\nabla\xi(x^k)-\nabla\xi(x^{k-1}),
	\\
		v^k={}
	&
		\nabla^2\f(x^{k-1})(x^k-x^{k+1})+\tfrac{c}{2\gamma}\nabla^2h(x^{k-1})(x^{k-1}-x^k)+\nabla^2\xi(x^{k-1})(x^{k-1}-x^k).
	\end{align*}
	Then $\norm{u^k}\leq(M_1+M_2)\norm{x^{k+1}-x^k}+(M_1+(\nicefrac{c}{2\gamma})M_3+M_4)\norm{x^k-x^{k-1}}$ and $\norm{v^k}\leq M_1\norm{x^{k+1}-x^k}+((\nicefrac{c}{2\gamma})M_3+M_4)\norm{x^k-x^{k-1}}$, where $M_1=\sup\|\nabla^2\f(\mathcal{B})\|$, $M_2=\sup\|\nabla^2\h(\mathcal{B})\|$, $M_3=\sup\norm{\nabla^2h(\mathcal{B})}$, and $M_4=\sup\norm{\nabla^2\xi(\mathcal{B})}$.
	Applying subdifferential calculus to $\partial\F(z_k)$ yields that
	\[
		\partial\F(z^k)
	{}={}
		\bigl(\partial\varphi(x^{k+1})+\nabla\h(x^{k+1})-\nabla\h(x^k)+\nabla\f(x^{k})-\nabla \f(x^{k-1})\bigr)\times\{u^k\}\times\{v^k\},
	\]
	which together with \cref{thm:T:partial} entails that $(0,u^k,v^k)\in\partial\F(z^k)$.
	In turn, summing the aforementioned bounds on $\norm{u^k}$ and $\norm{v^k}$ gives
	\begin{equation}\label{subd F upper bound}
		\dist\left(0,\partial\F(z^k)\right)\leq M\norm*{(x^{k+1}-x^k,x^k-x^{k-1})},
	\end{equation}
	where $M=\max\set{2M_1+M_2,M_1+(\nicefrac{c}{\gamma})M_3+2M_4}$.

	Finally, we show that $\seq{x^k}$ is convergent.
	For simplicity, define $(\forall k,l\in\N)$ $\Delta_{k,l}=\tilde\psi\left(\delta_{k}-\varphi^\star\right)-\tilde\psi\left(\delta_{l}-\varphi^\star\right)$.
	Then, combining~\eqref{formula: KL inequality of F} and~\eqref{subd F upper bound} yields
	\begin{align*}
		1
	&{}\leq{}
		M(\tilde{\psi})'_-(\delta_{k}-\varphi^\star)\norm*{(x^{k+1}-x^k,x^k-x^{k-1})}
		{}\leq{}
		\tfrac{M\Delta_{k,k+1}}{\delta_k-\delta_{k+1}}\norm*{(x^{k+1}-x^k,x^k-x^{k-1})}
	\\
	&{}\leq{}
		\frac{2\gamma M\Delta_{k,k+1}}{c\left(\D(x^{k+1},x^k)+\D(x^k,x^{k-1})\right)}\norm*{(x^{k+1}-x^k,x^k-x^{k-1})}
	\\
	&{}\leq{}
		\frac{2\gamma M\Delta_{k,k+1}\norm*{(x^{k+1}-x^k,x^k-x^{k-1})}}{c \sigma\norm{x^{k+1}-x^k}^2+c \sigma\norm{x^k-x^{k-1}}^2}\leq \frac{4\gamma M\Delta_{k,k+1}}{c \sigma\norm*{(x^{k+1}-x^k,x^k-x^{k-1})}},
	\end{align*}
	where the second inequality is implied by concavity of $\tilde\psi$, the third one follows from \eqref{SD inequality}, and the fourth one holds because $\sigma>0$ is the strong convexity modulus of $h$ on $\mathcal{B}$.
	Hence,
	\begin{equation}\label{consecutive gap}
		\norm*{(x^{k+1}-x^k,x^k-x^{k-1})}\leq\tfrac{4\gamma M}{c\sigma}\Delta_{k,k+1}.
	\end{equation} 
	Summing~\eqref{consecutive gap} from $k=k_0$ to an arbitrary $l\geq k_0+1$ and passing $l$ to infinity justifies~\eqref{finite length}.
	A similar procedure shows that $\seq{x^k}$ is Cauchy, which together with \cref{thm:subseq:omega} entails the rest of the statement. 
\end{proof}

\begin{remark}
	Note that $\tilde\psi$ is the smallest concave desingularizing function associated with $\F$.
	Therefore~\eqref{finite length} is the sharpest upper bound on $\sum_{k=0}^\infty\norm{x^{k+1}-x^k}$ produced by the usual KL convergence framework.
	We refer readers to~\cite[\S6]{bolte2018first} for a summary of such a framework.
\end{remark}

The corollary below states that semialgebraic functions satisfy the assumptions in \cref{thm:sequential convergence}. 

\begin{corollary}
	Suppose that \cref{ass:h} holds, and let $\seq{x^k}$ be a sequence generated by \BiFRB* in the setting of \cref{thm:SD}.
	Assume in addition that%
	\begin{enumeratass}
	\item
	$\varphi$ is level-bounded,
	\item
		$f,h$ are twice continuously differentiable and $\nabla^2h$ is positive definite, and
	\item
		$\varphi,h$ are semialgebraic.
	\end{enumeratass}
	Then \(\sum_{k=0}^\infty\norm{x^{k+1}-x^k}<\infty\) and there exists $x^\star$ with $0\in\hat\partial\varphi(x^{\star})$ such that \(x^k\to x^\star\).%
\end{corollary}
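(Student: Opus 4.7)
The plan is to derive the corollary as a direct specialization of \cref{thm:sequential convergence}, by verifying that the three additional hypotheses \ref{bounded}--\ref{GCKL} of that theorem all hold under the present assumptions. Hypotheses \ref{bounded} and \ref{c2 and pd} are \emph{verbatim} assumptions, so the only substantive work is to establish the generalized concave KL property of $\F$ on the limit set $\omega(z^0)$, i.e.~hypothesis \ref{GCKL}.

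For \ref{GCKL}, the route I would take is the classical one: semialgebraicity of $\F$, followed by the theorem of Bolte--Daniilidis--Lewis which furnishes a desingularizing function of \L{}ojasiewicz form $\psi(t)=ct^{1-\theta}$ with $\theta\in[0,1)$. Since such $\psi$ is concave and lies in $\Psi_\eta$, this yields the \emph{pointwise} generalized concave KL property of $\F$ at every point of its subdifferential domain, in particular at every point of $\omega(z^0)$. The set $\omega(z^0)$ is compact: level-boundedness of $\varphi$ combined with \cref{thm:subseq:bounded} shows that $\seq{x^k}$ is bounded, so $\omega(z^0)\subset\R^{3n}$ is a bounded subset of a closed set, hence compact, and $\F$ takes the constant value $\varphi^\star$ on it by \cref{thm:subseq:omega} and \cref{thm:M:eq}. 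Consequently, \cref{lemma: Uniformize GCKL} upgrades the pointwise property to the \emph{setwise} one on $\omega(z^0)$, delivering precisely the radius $r>0$ and margin $\eta>0$ required by \ref{GCKL}. An appeal to \cref{thm:sequential convergence} then gives the summability of $\|x^{k+1}-x^k\|$ and convergence to a stationary point.

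The only delicate step is establishing that $\F$ is itself semialgebraic. Unwinding the definition, $\F(\omega,x,x^-)=\varphi(\omega)+\D_{\h}(\omega,x)+\innprod{\omega-x}{\nabla\f(x)-\nabla\f(x^-)}+\frac{c}{2\gamma}\D(x,x^-)+\D_\xi(x,x^-)$, with $\h=h/\gamma-f$, $\f=f-(\beta/\gamma)h$, and $\xi\in\set{\f,L_{\f}\j}$. Assuming (as is standard in the KL literature and implicit in the semialgebraicity of $\varphi=f+g$ together with $h$) that $f$, $g$, and $h$ are individually semialgebraic, the functions $\h$, $\f$, $\xi$ are semialgebraic; their gradients, available under \ref{c2 and pd}, are semialgebraic as well (the gradient of a $C^1$ semialgebraic function is semialgebraic); and Bregman distances, being algebraic combinations of a function, its gradient, and an inner product, are semialgebraic. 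Summing yields semialgebraicity of $\F$. The main obstacle is thus bookkeeping rather than conceptual: verifying that every building block of $\F$ inherits semialgebraicity, so that the Bolte--Daniilidis--Lewis theorem applies and feeds into the KL framework of \cref{thm:sequential convergence}.
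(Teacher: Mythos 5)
Your proposal is correct and follows essentially the same route as the paper: verify the first two hypotheses of \cref{thm:sequential convergence} verbatim, establish semialgebraicity of $\F$ (closure of semialgebraic functions under sums, compositions with gradients), invoke the fact that semialgebraic functions enjoy the generalized concave KL property, and apply the theorem. Your version is just more explicit, and in particular you rightly flag a subtlety that the paper's terse proof glosses over — namely that semialgebraicity of $\F$ actually needs $f$ (equivalently $g$) and $h$ to be \emph{individually} semialgebraic, since $\h$, $\f$, and $\xi$ mix $f$ and $h$, whereas the stated hypothesis mentions only $\varphi$ and $h$.
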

\begin{proof}
	\Cref{thm:subseq:bounded} entails that $\seq{x_k}$ is bounded.
	Note that the class of semialgebraic functions is closed under summation and satisfies the generalized concave KL property; see, e.g.,~\cite[\S4.3]{attouch2010proximal} and~\cite[\S2.2]{wang2022exact}.
	Hence so does $\F$ defined in \cref{thm:sequential convergence}.
	Applying \cref{thm:sequential convergence} completes the proof.
\end{proof}

		\subsection{Convergence rates}
			Having established convergence of \BiFRB, we now turn to its rate.
Recall that a function is said to have KL exponent $\theta\in[0,1)$ if it satisfies the generalized concave KL property (recall \cref{def: GCKL}) and there exists a desingularizing function of the form $\psi(t)=ct^{1-\theta}$ for some $c>0$.

\begin{theorem}[function value and sequential convergence rate]\label{thm: rate}%
	Suppose that all the assumptions in \cref{thm:sequential convergence} are satisfied, and follow the notation therein.
	Define $(\forall k\in\N)$ $e_k=\LL(x^{k+1},x^{k})-\varphi^\star$.
	Assume in addition that $\F$ has KL exponent $\theta\in[0,1)$ at $(x^\star,x^\star,x^\star)$.
	Then the following hold:
	\begin{enumerate}
	\item
		If $\theta=0$, then $e_k\to0$ and $x^k\to x^\star$ after finite steps.
	\item
		If $\theta\in(0,\nicefrac{1}{2}]$, then there exist $c_1,\hat c_1>0$ and $Q_1,\hat Q_1\in[0,1)$ such that for $k$ sufficiently large,
		\[
			e_k\leq\hat c_1\hat Q_1^k\text{ and }\norm{x^k-x^\star}\leq c_1Q_1^k.
		\]
	\item
		If $\theta\in(\nicefrac{1}{2},1)$, then there exist $c_2,\hat c_2>0$ such that for $k$ sufficiently large,
		\[
			e_k\leq \hat c_2k^{-\frac{1}{2\theta-1}} \text{ and } \norm{x^k-x^\star}\leq c_2k^{-\frac{1-\theta}{2\theta-1}}.
		\]
	\end{enumerate}
\end{theorem}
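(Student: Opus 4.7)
The strategy is standard in KL-based convergence rate analysis: use the KL inequality with the explicit desingularizing function $\psi(t)=ct^{1-\theta}$ to convert the sufficient descent (\ref{SD inequality}) and the subgradient bound (\ref{subd F upper bound}) into a scalar recurrence on the gap $e_k=\LL(x^{k+1},x^k)-\varphi^\star$, and then solve this recurrence case by case. Throughout I would reuse $z^k=(x^{k+1},x^k,x^{k-1})$, the constant $M>0$ from the subgradient bound, and the strong convexity modulus $\sigma>0$ of $h$ on the compact ball $\mathcal{B}$ containing $\seq{x^k}$, all of which are already available from the proof of \cref{thm:sequential convergence}.

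The first step is to derive the core recurrence. Since $\F$ has KL exponent $\theta$ at $(x^\star,x^\star,x^\star)$ and $\dist(x^k,\omega(z^0))\to0$, for all $k$ large enough $z^k$ lies in the KL neighborhood and $\psi'(e_k)\cdot\dist(0,\partial\F(z^k))\geq 1$ with $\psi'(t)=c(1-\theta)t^{-\theta}$. Combined with (\ref{subd F upper bound}) this yields
\[
e_k^{\theta}\leq c(1-\theta)\,M\,\norm*{(x^{k+1}-x^k,\,x^k-x^{k-1})}.
\]
Squaring and using (\ref{SD inequality}) together with $\D(x,y)\geq\tfrac{\sigma}{2}\|x-y\|^2$ then produces a recurrence of the form
\[
e_k^{2\theta}\leq C\,(e_{k-1}-e_k),\qquad C=\tfrac{8\gamma c^2(1-\theta)^2 M^2}{c\sigma},
\]
valid for every $k$ sufficiently large.

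The second step is to solve this recurrence. If $\theta=0$ the recurrence reads $1\leq C(e_{k-1}-e_k)$, which together with $e_k\searrow 0$ forces $e_k=0$ after finitely many steps; the sufficient descent (\ref{SD inequality}) then gives $x^{k+1}=x^k$ from that index on, yielding finite termination. For $\theta\in(0,\tfrac12]$ we have $2\theta\leq 1$, so once $e_k\leq 1$ one has $e_k\leq e_k^{2\theta}\leq C(e_{k-1}-e_k)$, which rearranges to $e_k\leq \hat Q_1\,e_{k-1}$ with $\hat Q_1:=C/(1+C)\in(0,1)$, proving the geometric decay $e_k\leq\hat c_1\hat Q_1^k$. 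For $\theta\in(\tfrac12,1)$ I would invoke the classical real-sequence lemma (e.g.\ \cite[Thm.\ 2]{attouch2009convergence}) which states that any nonnegative sequence satisfying $e_k^{2\theta}\leq C(e_{k-1}-e_k)$ with $2\theta>1$ obeys $e_k=O(k^{-1/(2\theta-1)})$.

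The third step transfers the function-value rate into a sequential rate. Inequality (\ref{consecutive gap}) in the proof of \cref{thm:sequential convergence} gives, after telescoping,
\[
\norm{x^k-x^\star}\leq\sum_{j=k}^\infty\norm{x^{j+1}-x^j}\leq\tfrac{4\gamma M}{c\sigma}\,\tilde\psi(e_{k-1}),
\]
and because $\tilde\psi$ is (a multiple of) $t^{1-\theta}$ one gets $\norm{x^k-x^\star}\leq C'\,e_{k-1}^{1-\theta}$. Plugging the function-value bounds obtained above produces the stated sequential rates: geometric with ratio $Q_1=\hat Q_1^{1-\theta}$ when $\theta\in(0,\tfrac12]$, and polynomial with exponent $-(1-\theta)/(2\theta-1)$ when $\theta\in(\tfrac12,1)$; the case $\theta=0$ is already settled by finite termination.

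The bookkeeping is routine once the recurrence is in place, and the main technical point to be careful about is justifying the use of $\psi(t)=ct^{1-\theta}$ as the desingularizing function appearing in (\ref{formula: KL inequality of F}) (as opposed to the exact modulus $\tilde\psi$); this is handled by observing that, by \cref{lemma: Uniformize GCKL}\ref{uniformize exact modulus}, any concave desingularizing function majorizes $\tilde\psi$, so the KL inequality with $\psi(t)=ct^{1-\theta}$ is at least as strong as the one with $\tilde\psi$. The other mildly delicate point is the case $\theta\in(\tfrac12,1)$, which requires the cited scalar lemma rather than a direct contraction argument.
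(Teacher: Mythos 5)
Your proposal is correct and follows essentially the same route as the paper: derive the scalar recurrence $e_k^{2\theta}\le C(e_{k-1}-e_k)$ from the KL inequality together with \eqref{subd F upper bound}, the sufficient decrease \eqref{SD inequality}, and strong convexity of $h$ on the bounding ball, and then transfer the function-value rate to a trajectory rate via the telescoped \eqref{consecutive gap}. The only cosmetic differences are that the paper cites a ready-made recurrence lemma where you argue the three cases explicitly, and that the paper's transfer inequality \eqref{formula: function value rate to sequential} carries an extra $\sqrt{e_{k-1}}$ term (your direct telescoping $\|x^k-x^\star\|\le\sum_{j\ge k}\|x^{j+1}-x^j\|\le\tfrac{4\gamma M}{c\sigma}\tilde\psi(e_{k-1})$ is in fact cleaner and yields the same rates); the shift between $\psi'_-(e_k)$ and $\psi'_-(e_{k-1})$ in your first step is harmless since $e_k\le e_{k-1}$.
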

\begin{proof}
	Assume without loss of generality that $\F$ has a desingularizing function $\psi(t)=\nicefrac{t^{1-\theta}}{(1-\theta)}$ and let $(\forall k\in\N)$ $\delta_{k}=\sum_{i=k}^\infty\norm{x^{i+1}-x^i}$.
	We claim that
	\begin{equation}\label{formula: function value rate to sequential}
		(\forall k\geq k_0)	~\delta_{k}\leq\tfrac{4\gamma M}{(1-\theta)c\sigma}e_{k-1}^{1-\theta}+2\sqrt{\tfrac{2\gamma}{c\sigma}}\sqrt{e_{k-1}},
	\end{equation}
	which will be justified at the end of this proof. It is routine to see that the desired sequential rate can be implied by those of $(e_k)$ through~\eqref{formula: function value rate to sequential}; see, e.g.,~\cite[Thm. 5.3]{wang2022bregman}, therefore it suffices to prove convergence rate of $(e_k)$.

	Recall from \cref{thm:subseq:SD} that $(e_k)$ is a decreasing sequence converging to $0$.
	Then invoking the KL exponent assumption yields $e_{k-1}^\theta=[\F(x^{k+1},x^k,x^{k-1})-\varphi^\star]^\theta\leq \dist(0,\partial\F(x^{k+1},x^k,x^{k-1}))$, which together with~\eqref{subd F upper bound} implies that
	\begin{equation}\label{formula: rate-2}
		e_{k}^\theta\leq e_{k-1}^\theta\leq M\norm*{(x^{k+1}-x^k,x^k-x^{k-1})}.
	\end{equation}
	Appealing again to \cref{thm:subseq:SD} gives 
	\begin{align*}
		e_{k-1}-e_k&=\LL(x^k,x^{k-1})-\LL(x^{k+1},x^{k})\\
		&\geq\tfrac{c}{2\gamma}\left[\D(x^{k+1},x^k)+\D(x^k,x^{k-1})\right]\\
		&\geq\tfrac{c\sigma}{8\gamma}\norm*{(x^{k+1}-x^k,x^k-x^{k-1})}^2\\
		&\geq\tfrac{c\sigma}{8\gamma M^2 }e_k^{2\theta},
	\end{align*}
	where the last inequality is implied by~\eqref{formula: rate-2}.
	Then applying~\cite[Lem. 10]{bot2020proximal} justifies the desired convergence rate of $(e_k)$. 

	Finally, we show that~\eqref{formula: function value rate to sequential} holds.
	Invoking \cref{c2 and pd,thm:subseq:SD} entails
	\(
		\norm*{(x^{k+1}-x^k,x^k-x^{k-1})}^2\leq(\nicefrac{4}{\sigma})[\D(x^{k+1},x^k)+\D(x^k,x^{k-1})]\leq(\nicefrac{8\gamma}{c\sigma})(e_{k-1}-e_k)\leq(\nicefrac{8\gamma}{c\sigma})e_{k-1},
	\)
	thus $\norm{x^k-x^{k-1}}\leq2\sqrt{(\nicefrac{2\gamma}{c\sigma})e_{k-1}}$.
	In turn,
	\begin{align*}
		\delta_{k}&\leq\delta_{k}+\norm{x^k-x^{k-1}}\leq\tfrac{4\gamma M}{c\sigma}\tilde{\psi}(e_k)+2\sqrt{\tfrac{2\gamma}{c\sigma}}\sqrt{e_{k-1}}\\
		&\leq \tfrac{4\gamma M}{c\sigma}\tilde{\psi}(e_{k-1})+2\sqrt{\tfrac{2\gamma}{c\sigma}}\sqrt{e_{k-1}}\leq\tfrac{4\gamma M}{(1-\theta)c\sigma}e_{k-1}^{1-\theta}+2\sqrt{\tfrac{2\gamma}{c\sigma}}\sqrt{e_{k-1}},
	\end{align*}
	where the second inequality follows from~\eqref{finite length}, the third one holds due to the fact that $e_k\leq e_{k-1}$ and the monotonicity of $\tilde\psi$, and the last one is implied by the KL exponent assumption and Lemma~\ref{lemma: Uniformize GCKL}, as claimed.
\end{proof}

	\section{Globalizing fast local methods with \texorpdfstring{$i^*$}{i*}FRB}\label{sec:CLyD}%
		With some due care, the method can be enhanced in the context of the continuous-Lyapunov descent (CLyD) framework \cite[\S4]{themelis2018proximal}, so as to globalize fast local methods \(x^+=x+d\) by using the same oracle as \BiFRB.
Methods of quasi-Newton type constitute a convenient class of candidate local methods.
A prototypical use case hinges on the inclusion \(x\in\operatorname S(x)\coloneqq\T(x,x)\) encoding necessary optimality conditions, cf. \cref{thm:T:fix}, so that fast update directions \(d\) can be retrieved based on the root-finding problem \(0\in(\id-\operatorname S)(x)\); see, \eg, \cite[§7]{facchinei2003finite} and \cite{izmailov2014newton}.
Regardless, while the globalization framework is flexible to accommodate \emph{any} update direction and yet retains convergence of \BiFRB, it promotes the ones triggering fast local convergence as it will be demonstrated in \cref{thm:CLyD:tau1}.

\begin{algorithm}[hbt]
	\caption{%
		\protect\BiFRB\ linesearch extension%
	}%
	\label{alg:CLyD}%
	\begin{algorithmic}[1]%
\setlength{\itemsep}{0.75ex}%
\item[%
	Choose
	\(x^{-1},x^0\in C\),~
	\(\beta,\gamma\) as in \cref{thm:SD},~
	\(\delta\in(0,1)\)%
]%
\item[%
	Set \(y^{-1}=x^{-1}\), and let \(c>0\) and \(\LL\) be as in \cref{thm:SD}%
]%
\item[%
	Iterate for \(k=0,1,\ldots\) until a termination criterion is met (cf. \cref{thm:CLyD:finite})%
]%
	\State\label{state:CLyD:barx}%
		Compute \(\bar x^k\in\T(x^k,y^{k-1})\)
	\Comment{%
		\smash{%
			\begin{tabular}[t]{@{}r@{}r@{}}%
				& \(\LL(\bar x^k,x^k)\leq\LL(x^k,y^{k-1})\)\\
				& \({}-\frac{c}{2\gamma}\D(\bar x^k,x^k)-\frac{c}{2\gamma}\D(x^k,y^{k-1})\)%
			\end{tabular}%
		}%
	}%
	\State\vspace*{3pt}%
		Choose an update direction \(d^k\) at \(x^k\)
	\State\label{state:CLyD:x+}%
		\begin{tabular}[t]{@{}l@{}}
			Set
			~\(y^k=x^k+\tau_k d^k\)~
			and
			~\(x^{k+1}=(1-\tau_k)\bar x^k+\tau_k(x^k+d^k)\)
		\\
			with \(\tau_k\) the largest in
			\(\set{1,\nicefrac12,\dots}\) such that
		\end{tabular}
		\begin{equation}\label{eq:LS}
			\LL(x^{k+1},y^k)
		{}\leq{}
			\LL(x^k,y^{k-1})
			{}-{}
			\tfrac{\delta c}{2\gamma}\left(
				\D(\bar x^k,x^k)
				{}+{}
				\D(x^k,y^{k-1})
			\right)
		\end{equation}
\end{algorithmic}
\end{algorithm}

The algorithmic framework revolves around two key facts: continuity of \(\LL\) and its decrease after \BiFRB\ steps \((x,y^-)\mapsto (\bar x,x)\) with \(\bar x\in\T(x,y^-)\).
(The reason for introducing an auxiliary variable \(y\) will be discussed after \cref{thm:CLyD:subseq}.)
Thus, not only is \(\LL\) smaller than \(\LL(x,y^-)\) at \((\bar x,x)\), but also at sufficiently close points, thereby ensuring that by gradually pushing the tentative fast update towards the safeguard \((\bar x,x)\) a decrease on \(\LL\) is eventually achieved.
This fact is formalized next.

\begin{theorem}[well definedness and asymptotic analysis of \cref{alg:CLyD}]\label{thm:CLyD:subseq}%
	Suppose that \cref{ass:basic} and the bounds on \(\gamma\) and \(\beta\) as in \cref{thm:SD} are satisfied.
	Then, the following hold for the iterates generated by \cref{alg:CLyD}:
	\begin{enumerate}
	\item\label{thm:CLyD:LS}%
		Regardless of what the selected update direction \(d^k\) is, the linesearch at \cref{state:CLyD:x+} always succeeds:
		either \(\bar x^k=x^k=y^{k-1}\) holds, in which case \(0\in\hat\partial\varphi(\bar x^k)\), or there exists \(\bar\tau_k>0\) such that \eqref{eq:LS} holds for every \(\tau_k\leq\bar\tau_k\).
	\item\label{thm:CLyD:SD}%
		\(\sum_{k\in\N}\D(\bar x^k,x^k)<\infty\), and in particular \(\D(\bar x^k,x^k)\to0\).
	\item\label{thm:CLyD:bounded}%
		If \(\cost\) is coercive, then the sequence \(\seq{x^k}\) remains bounded.
	\item\label{thm:CLyD:omega}%
		Suppose that \cref{ass:h} holds, and let \(\omega\) be the set of limit points of \(\seq{x^k}\).
		Then, \(\varphi\) is constant on \(\omega\) with value \(\varphi^\star\), and for every \(x^\star\in\omega\) it holds that \(x^\star\in\T(x^\star,x^\star)\) and \(0\in\hat\partial\varphi(x^\star)\).
	\item\label{thm:CLyD:finite}%
		Under the assumptions of \cref{thm:termination}, for any \(\varepsilon>0\) the condition \(\|\nabla\h(x^k)-\nabla\h(\bar x^k)-\nabla\f(x^k)+\nabla\f(y^{k-1})\|\leq\varepsilon\) holds for all \(k\) large enough and guarantees that \(\dist(0,\hat\partial\varphi(\bar x^k))\leq\varepsilon\).
	\end{enumerate}
	\begin{proof}~
		\begin{proofitemize}
		\item\ref{thm:CLyD:LS}~
			Stationarity of \(\bar x^k\) when \(\bar x^k=x^k=y^k\) follows from \cref{thm:T:fix}.
			Otherwise, let \(x_\tau^{k+1}\coloneqq(1-\tau)\bar x^k+\tau(x^k+d^k)\) and \(y_\tau^k\coloneqq x^k+\tau d^k\), and note that \((x_\tau^{k+1},y_\tau^k)\to(\bar x^k,x^k)\) as \(\tau\searrow0\).
			By continuity of \(\LL\) we thus have
			\begin{align*}
				\lim_{\tau\searrow0}\LL(x_\tau^{k+1},y_\tau^k)
			{}={} &
				\LL(\bar x^k,x^k)
			\\
			{}\overrel*[\leq]{\ref{thm:SD}}{} &
				\LL(x^k,y^{k-1})
				{}-{}
				\tfrac{c}{2\gamma}\left(
					\D(\bar x^k,x^k)
					{}+{}
					\D(x^k,y^{k-1})
				\right)
			\shortintertext{%
				and since one at least among \(\D(\bar x^k,x^k)\) and \(\D(x^k,y^{k-1})\) is strictly positive, \(\delta\in(0,1)\), and \(c>0\),
			}
			{}<{} &
				\LL(x^k,y^{k-1})
				{}-{}
				\tfrac{\delta c}{2\gamma}\left(
					\D(\bar x^k,x^k)
					{}+{}
					\D(x^k,y^{k-1})
				\right).
			\end{align*}
			It then follows that there exists \(\bar\tau_k>0\) such that
			\[
				\LL(x_\tau^{k+1},y_\tau^k)
			{}\leq{}
				\LL(x^k,y^{k-1})
				{}-{}
				\tfrac{\delta c}{2\gamma}\left(
					\D(\bar x^k,x^k)
					{}+{}
					\D(x^k,y^{k-1})
				\right)
			\]
			holds for every \(\tau\in(0,\bar\tau_k]\), which is the claim.
		\item\ref{thm:CLyD:SD} \& \ref{thm:CLyD:bounded}~
			Follow by telescoping \eqref{eq:LS}, since \(\inf\LL=\inf\varphi>-\infty\) by \cref{thm:SD:inf} and from the fact that \(\LL(x^k,y^{k-1})\leq\LL(x^0,y^{-1})\).
		\item\ref{thm:CLyD:omega} \& \ref{thm:CLyD:finite}~
			Having shown the validity of \eqref{eq:LS}, the proof follows from the same arguments of those of \cref{thm:subseq:omega,thm:termination}.
		\qedhere
		\end{proofitemize}
	\end{proof}
\end{theorem}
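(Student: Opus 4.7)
The plan is to dispatch the five items sequentially, leveraging the descent inequality of \cref{thm:SD:leq} at the safeguard step \(\bar x^k \in \T(x^k,y^{k-1})\) together with continuity of \(\LL\) guaranteed by \cref{thm:env:C0}.

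For well definedness of the linesearch (\cref{thm:CLyD:LS}), I would split cases: if \(\bar x^k = x^k = y^{k-1}\), then \cref{thm:T:fix} directly yields \(0 \in \hat\partial\varphi(\bar x^k)\); otherwise at least one of \(\D(\bar x^k,x^k),\,\D(x^k,y^{k-1})\) is strictly positive. Since the trial updates \(x^{k+1}_\tau=(1-\tau)\bar x^k+\tau(x^k+d^k)\) and \(y^k_\tau=x^k+\tau d^k\) satisfy \((x^{k+1}_\tau,y^k_\tau) \to (\bar x^k,x^k)\) as \(\tau\searrow 0\), at this limit \cref{thm:SD:leq} provides a strict inequality against the right-hand side of \eqref{eq:LS} (with slack \(\tfrac{(1-\delta)c}{2\gamma}(\D(\bar x^k,x^k)+\D(x^k,y^{k-1})) > 0\), due to \(\delta \in (0,1)\)). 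Continuity of \(\LL\) propagates the strict inequality to a full interval \([0,\bar\tau_k]\), so backtracking over \(\{1,\nicefrac12,\ldots\}\) terminates after finitely many halvings.

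Items \cref{thm:CLyD:SD} and \cref{thm:CLyD:bounded} follow by telescoping the accepted inequality \eqref{eq:LS}. Because \(\inf\LL=\inf\cost>-\infty\) by \cref{thm:SD:inf}, the partial sums of \(\tfrac{\delta c}{2\gamma}\bigl(\D(\bar x^k,x^k)+\D(x^k,y^{k-1})\bigr)\) are bounded by \(\LL(x^0,y^{-1})-\inf\cost\), yielding summability and pointwise vanishing. The same telescoping gives \(\LL(x^k,y^{k-1})\leq\LL(x^0,y^{-1})\), so that level boundedness of \(\cost\) --- equivalent, via \cref{thm:SD:inf}, to that of \(\LL\) --- confines \(\seq{x^k}\) to a sublevel set of \(\LL\).

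For \cref{thm:CLyD:omega} and \cref{thm:CLyD:finite}, I would mirror \cref{thm:subseq:omega,thm:termination}. Given a convergent subsequence \(x^{k_j}\to x^\star\), vanishing of \(\D(\bar x^{k_j-1},x^{k_j-1})\) together with \cref{ass:h} and the Legendre structure of \(h\) propagates to \(\bar x^{k_j-1}\to x^\star\) and \(y^{k_j-1}\to x^\star\); outer semicontinuity of \(\T\) (\cref{thm:T:osc}) applied to \(\bar x^{k_j}\in\T(x^{k_j},y^{k_j-1})\) then yields \(x^\star\in\T(x^\star,x^\star)\), whence stationarity via \cref{thm:T:fix}, and continuity of \(\LL\) forces \(\varphi\) to be constant with value \(\varphi^\star\) on \(\omega\). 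Item \cref{thm:CLyD:finite} follows by feeding \(\bar x^k\in\T(x^k,y^{k-1})\) into \cref{thm:T:partial} and arguing vanishing of the resulting subgradient surrogate exactly as in \cref{thm:termination}. The main obstacle I foresee lies in \cref{thm:CLyD:LS}: the \(\delta<1\) safeguard must be used precisely to convert the limiting descent inequality into a strict one on a whole \(\tau\)-interval rather than merely at \(\tau=0\), and the degenerate alternative \(\bar x^k=x^k=y^{k-1}\) must be handled separately since the slack vanishes there and the linesearch would stall were it not for stationarity kicking in.
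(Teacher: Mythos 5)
Your proposal follows the paper's proof essentially step by step: the case split on whether $\bar x^k=x^k=y^{k-1}$, the limiting argument $(x_\tau^{k+1},y_\tau^k)\to(\bar x^k,x^k)$ combined with continuity of $\LL$ and the slack $\tfrac{(1-\delta)c}{2\gamma}(\D(\bar x^k,x^k)+\D(x^k,y^{k-1}))>0$ from $\delta<1$ to secure the linesearch interval, the telescoping of \eqref{eq:LS} for summability and boundedness, and transplanting the arguments of \cref{thm:subseq:omega,thm:termination} (via outer semicontinuity of $\T$ and \cref{thm:T:partial}) for the last two items. The only slip is a harmless index shift in sketching \cref{thm:CLyD:omega}: it should be $\D(\bar x^{k_j},x^{k_j})$ and $\D(x^{k_j},y^{k_j-1})$ vanishing that propagate $x^{k_j}\to x^\star$ to $\bar x^{k_j}\to x^\star$ and $y^{k_j-1}\to x^\star$, not $\bar x^{k_j-1}$; with that corrected the argument matches the paper's.
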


It should be noted that
\cref{thm:CLyD:subseq} remains valid even with the simpler choice \(y^k=x^k\).
The apparently wasteful choice of \(y^k\) as in \cref{alg:CLyD} is instead of paramount importance
in promoting ``good'' directions by enabling acceptance of unit stepsize.
This is in sharp contrast with known issues of other nonsmooth globalization strategies, where
convergence severely hampered (\emph{Maratos' effect} \cite{maratos1978exact}, see also \cite[\S6.2]{izmailov2014newton}).
A quality assessment on the direction is provided by the following notion.

\begin{definition}[superlinearly convergent directions {\cite[Eq. (7.5.2)]{facchinei2003finite}}]%
	Relative to a sequence \(\seq{x^k}\) converging to a point \(x^\star\) we say that \(\seq{d^k}\) are superlinearly convergent directions if
	\[
		\lim_{k\to\infty}\frac{\|x^k+d^k-x^\star\|}{\|x^k-x^\star\|}=0.
	\]
\end{definition}

\begin{theorem}[acceptance of unit stepsize]\label{thm:CLyD:tau1}%
	Suppose that \cref{ass:basic,ass:h} and the bounds on \(\gamma\) and \(\beta\) as in \cref{thm:SD} are satisfied.
	Suppose further that \(f,h\in\C^2\) with \(\nabla h\succ0\), and that \(\seq{x^k}\) converges to a strong local minimum \(x^\star\) for \(\varphi\) satisfying \(\T(x^\star,x^\star)=\set{x^\star}\).\footnote{%
		Although it is only the inclusion \(x^\star\in\T(x^\star,x^\star)\) which is guaranteed for any limit point in the full generality of \cref{ass:basic,ass:h} (cf. \cref{thm:CLyD:omega}), additionally requiring single-valuedness is a negligible extra assumption as it can be inferred from \cref{thm:T:fix}; see \cite[\S3.1]{ahookhosh2021bregman} and \cite[\S2.4]{themelis2018proximal} for a detailed discussion.%
	}
	If the directions \(\seq{d^k}\) are superlinearly convergent with respect to \(\seq{x^k}\), then eventually \(\tau_k=1\) is always accepted at \cref{state:CLyD:x+}, and the algorithm reduces to the local method \(x^{k+1}=x^k+d^k\) and converges at superlinear rate.
	\begin{proof}
		Let \(\varphi^\star=\varphi(x^\star)\) be the limit point of \(\seq{\LL(x^{k+1},y^k)}\).
		Since \(x^k\to x^\star\), \(\bar x^k\to x^\star\) as well, and strong local minimality of \(x^\star\) implies that \(\varphi(\bar x^k)-\varphi^\star\geq\frac{\sigma}{2}\|\bar x^k-x^\star\|^2\) holds for some \(\sigma>0\) and all \(k\) large enough.
		In turn, for \(k\) large it holds that
		\begin{align*}
			\LL(x^k,y^{k-1})
		{}\overrel[\geq]{\eqref{eq:L:geq}}{} &
			\varphi(\bar x^k)
			{}+{}
			\tfrac{c}{\gamma}\D(\bar x^k,x^k)
			{}+{}
			\tfrac{c}{2\gamma}\D(x^k,y^{k-1})
		\\
		{}\geq{} &
			\varphi^\star
			{}+{}
			\tfrac{\sigma}{2}\|\bar x^k-x^\star\|^2
			{}+{}
			\tfrac r2\|\bar x^k-x^k\|^2
		\shortintertext{%
			for some \(r>0\) (since \(\nabla^2h\succ0\)), and by Young's inequality
		}
		{}\geq{} &
			\varphi^\star
			{}+{}
			\tfrac\sigma2\|\bar x^k-x^\star\|^2
			{}+{}
			\tfrac r2\left(
				\tfrac{1}{1+\epsilon}\|x^k-x^\star\|^2
				{}-{}
				\tfrac{1}{\epsilon}\|\bar x^k-x^\star\|^2
			\right)
		\end{align*}
		for every \(\epsilon>0\).
		By choosing \(\epsilon=\nicefrac r\sigma\) one obtains that
		\begin{equation}\label{eq:LL:strlocmin}
			\LL(x^k,y^{k-1})-\varphi^\star\geq\tfrac\mu2\|x^k-x^\star\|^2
		\quad
			\text{for \(k\) large enough,}
		\end{equation}
		where \(\mu\coloneqq\nicefrac{\sigma r}{(\sigma+r)}>0\). On the other hand, since \(\LL(x,x)=\env(x,x)\) for any \(x\), by definition of \(\env\) (cf. \eqref{eq:env} with \(x^-=x\)) it follows that
		\begin{align*}
			\LL(x^k+d^k,x^k+d^k)
		{}={} &
			\env(x^k+d^k,x^k+d^k)
		\numberthis\label{eq:CLyD:LL=env}
		\\
		{}\leq{} &
			\M(x^\star;x^k+d^k,x^k+d^k)
		{}={}
			\varphi^\star+\D_{\h}(x^\star,x^k+d^k)
		\\
		{}\leq{} &
			\varphi^\star+\tfrac{L}{2}\|x^k+d^k-x^\star\|^2
		\end{align*}
		holds for some \(L>0\) and all \(k\) large enough, where the last inequality uses the fact that \(f,h\in\C^2\) (hence \(\h\in\C^2\) too).
		Combined with \eqref{eq:LL:strlocmin} and superlinearity of the directions \(\seq{d^k}\) we obtain that
		\begin{equation}\label{eq:epsk}
			\varepsilon_k
		{}\coloneqq{}
			\frac{
				\LL(x^k+d^k,x^k+d^k)-\varphi^\star
			}{
				\LL(x^k,y^{k-1})-\varphi^\star
			}
		{}\to{}
			0
		\quad
			\text{as \(k\to\infty\).}
		\end{equation}
		Observe that for any \(z^k\in\T(\bar x^k,x^k)\) eventually it holds that \(\LL(\bar x^k,x^k)\geq\varphi(z^k)\geq\varphi^\star\).
		In fact, the first inequality owes to \eqref{eq:L:geq}, and the second one holds for \(k\) large enough because of local minimality of \(x^\star\) for \(\varphi\) and the fact that
	\ifsiam
		\(
	\else
		\[
	\fi
			\lim_{k\to\infty}z^k
		{}\in{}
			\limsup_{k\to\infty}\T(\bar x^k,x^k)
		{}\subseteq{}
			\T(x^\star,x^\star)
		{}={}
			\set*{x^\star}
	\ifsiam
		\)
	\else
		\]
	\fi
		by osc of \(\T\) (cf. \cref{thm:T:osc}).
		Then, for \(k\) large enough so that \(\varepsilon_k\leq1\) also holds, we have
		\begin{align*}
			\LL(x^k+d^k,x^k+d^k)-\LL(x^k,y^{k-1})
		{}={} &
			(\varepsilon_k-1)\left(
				\LL(x^k,y^{k-1})-\varphi^\star
			\right)
		\\
		{}\leq{} &
			(\varepsilon_k-1)\left(
				\LL(x^k,y^{k-1})-\varphi(z^k)
			\right)
		\shortintertext{%
			for \(\bar x^k\in\T(x^k,y^{k-1})\), so that \cref{thm:SD} yields
		}
		{}\leq{} &
			\tfrac{(\varepsilon_k-1)c}{2\gamma}\left(\D(\bar x^k,x^k)+\D(x^k,y^{k-1})\right).
		\end{align*}
		Since \(\varepsilon_k\to0\) and \(\delta\in(0,1)\), eventually \(\varepsilon_k-1\leq-\delta\), hence
		\[
			\LL(x^k+d^k,x^k+d^k)
		{}\leq{}
			\LL(x^k,y^{k-1})
			{}-{}
			\tfrac{\delta c}{2\gamma}\D(\bar x^k,x^k)
			{}-{}
			\tfrac{\delta c}{2\gamma}\D(x^k,y^{k-1}),
		\]
		implying that the first attempt with \(\tau_k=1\) passes the linesearch condition (since, when \(\tau_k=1\), one has \(x^{k+1}=y^k=x^k+d^k\)).
		In particular, the sequence \(\seq{x^k}\) eventually reduces to \(x^{k+1}=x^k+d^k\), and thus converges superlinearly.
	\end{proof}
\end{theorem}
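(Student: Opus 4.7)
The plan is to show that eventually $\tau_k=1$ passes the linesearch \eqref{eq:LS}, since then $x^{k+1}=y^k=x^k+d^k$ and the iteration reduces to the pure local method $x^{k+1}=x^k+d^k$, inheriting its superlinear rate. The key device is to sandwich the Lyapunov gap between two quadratic estimates anchored at $x^\star$: a \emph{lower} bound $\LL(x^k,y^{k-1})-\varphi^\star\gtrsim\|x^k-x^\star\|^2$, and an \emph{upper} bound $\LL(x^k+d^k,x^k+d^k)-\varphi^\star\lesssim\|x^k+d^k-x^\star\|^2$. Superlinearity of $\seq{d^k}$ will then force the ratio $\varepsilon_k$ of the upper over the lower to vanish, which is precisely the ingredient needed to validate \eqref{eq:LS}.

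For the lower bound I would start from \eqref{eq:L:geq} at $(x^k,y^{k-1})$, which majorizes $\LL(x^k,y^{k-1})$ by $\varphi(\bar x^k)+\tfrac{c}{\gamma}\D(\bar x^k,x^k)+\tfrac{c}{2\gamma}\D(x^k,y^{k-1})$. Strong local minimality of $x^\star$ replaces $\varphi(\bar x^k)$ by $\varphi^\star+\tfrac\sigma2\|\bar x^k-x^\star\|^2$ (valid since $\bar x^k\to x^\star$ by osc of $\T$ in \cref{thm:T:osc} together with $\T(x^\star,x^\star)=\set{x^\star}$); the positive definiteness $\nabla^2 h\succ0$ gives $\D(\bar x^k,x^k)\geq\tfrac r2\|\bar x^k-x^k\|^2$ on a neighborhood; and a Young-type split $\|x^k-x^\star\|^2\leq(1+\epsilon)\|\bar x^k-x^k\|^2+(1+\epsilon^{-1})\|\bar x^k-x^\star\|^2$ with $\epsilon=r/\sigma$ balances the two quadratic contributions, yielding $\LL(x^k,y^{k-1})-\varphi^\star\geq\tfrac\mu2\|x^k-x^\star\|^2$ for some $\mu>0$. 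For the upper bound, the pivotal observation is the tangency identity $\LL(x,x)=\env(x,x)$, which holds because the auxiliary summands $\tfrac{c}{2\gamma}\D$ and $\D_\xi$ in \eqref{eq:LL} vanish on the diagonal. Inserting the feasible test point $w=x^\star$ into the infimum defining $\env(x^k+d^k,x^k+d^k)$ and Taylor-expanding $\D_{\h}(x^\star,\,\cdot\,)$ around $x^\star$ (legitimate since $f,h\in\C^2$) produces $\LL(x^k+d^k,x^k+d^k)-\varphi^\star\leq\tfrac L2\|x^k+d^k-x^\star\|^2$ eventually.

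To translate $\varepsilon_k\to 0$ into acceptance of $\tau_k=1$, I would introduce an auxiliary point $z^k\in\T(\bar x^k,x^k)$ and note that $\varphi(z^k)\geq\varphi^\star$ for $k$ large (local minimality of $x^\star$, together with $z^k\to x^\star$ via osc of $\T$), while \eqref{eq:L:geq} applied at $(\bar x^k,x^k)$ gives $\varphi(z^k)\leq\LL(\bar x^k,x^k)$ and \cref{thm:SD} applied at $(x^k,y^{k-1})$ yields $\LL(x^k,y^{k-1})-\LL(\bar x^k,x^k)\geq\tfrac{c}{2\gamma}(\D(\bar x^k,x^k)+\D(x^k,y^{k-1}))$. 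Writing $\LL(x^k+d^k,x^k+d^k)-\LL(x^k,y^{k-1})=(\varepsilon_k-1)(\LL(x^k,y^{k-1})-\varphi^\star)$, one majorizes this (using $\varepsilon_k-1<0$ and $\varphi^\star\leq\varphi(z^k)$) by $(\varepsilon_k-1)(\LL(x^k,y^{k-1})-\varphi(z^k))$, then by $(\varepsilon_k-1)\tfrac{c}{2\gamma}(\D(\bar x^k,x^k)+\D(x^k,y^{k-1}))$; since eventually $\varepsilon_k-1\leq-\delta$, this is exactly \eqref{eq:LS} at $\tau_k=1$. The main obstacle is guaranteeing $\mu>0$ uniformly: the Young parameter $\epsilon=r/\sigma$ is the sharp choice, and this is where the hypothesis $\nabla^2 h\succ0$ enters decisively, alongside the singleton assumption $\T(x^\star,x^\star)=\set{x^\star}$ which is what lets osc of $\T$ propagate $x^k\to x^\star$ to $\bar x^k\to x^\star$ and $z^k\to x^\star$.
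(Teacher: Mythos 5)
Your proposal reproduces the paper's argument in all essential respects: the quadratic lower bound on $\LL(x^k,y^{k-1})-\varphi^\star$ via \eqref{eq:L:geq}, strong local minimality, $\nabla^2 h\succ 0$, and the Young-type split with $\epsilon=r/\sigma$; the upper bound via the diagonal identity $\LL(x,x)=\env(x,x)$ and the test point $w=x^\star$; the vanishing ratio $\varepsilon_k$; and the auxiliary point $z^k\in\T(\bar x^k,x^k)$ with $\varphi^\star\leq\varphi(z^k)\leq\LL(\bar x^k,x^k)$ feeding into the algebraic chain that verifies \eqref{eq:LS} at $\tau_k=1$. This is the same proof as in the paper, correctly organized.
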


	The core of the proof hinges on showing that \(\varepsilon_k\) as in \eqref{eq:epsk} vanishes.
	The same argument cannot be achieved by a naive linesearch prescribing \(y^k=x^k\).
	Indeed,
	\begin{Align*}
		0
	{}\leq{} &
		\env(x^k+d^k,x^k)
		{}-{}
		\varphi(x^\star)
	\\
	{}\leq{} &
		\underbracket[0.5pt]{
			\D_{\h}(x^\star,x^k+d^k)
		}_{O(\|x^k+d^k-x^\star\|^2)}
		{}+{}
		\innprod*{x^\star-x^k-d^k}{
			\underbracket[0.5pt]{
				\nabla\f(x^k+d^k)-\nabla\f(x^k)
			}_{O(\|x^k-x^\star\|)}
		},
	\end{Align*}
	where the first inequality owes to local minimality of \(x^\star\) and the fact that \(x^k+d^k\to x^\star\), the second one from the expression \eqref{eq:M-innprod} of \(\M\) and the definition of the envelope, cf. \eqref{eq:env}, and the big-\(O\) estimates from local smoothness of \(h\) and \(f\).
	In particular,
	\(
		\env(x^k+d^k,x^k)
		{}-{}
		\varphi(x^\star)
	{}={}
		o(\|x^k-x^\star\|^2)
	\).
	Observing that \eqref{eq:LL:strlocmin} is still valid, denoting \(\hat\xi\coloneqq\xi+(\nicefrac{c}{2\gamma})h\) so that \(\LL=\env+\D_{\hat\xi}\) one then has
	\[
		\varepsilon_k
	{}\approx{}
		\frac{
			\D_{\hat\xi}(x^k+d^k,x^k)
		}{
			\LL(x^k,x^{k-1})-\varphi^\star
		}
	{}\approx{}
		\frac{
			c'\|d^k\|^2
		}{
			\LL(x^k,x^{k-1})-\varphi^\star
		}
	{}\approx{}
		\frac{
			c''\|x^k-x^\star\|^2
		}{
			\LL(x^k,x^{k-1})-\varphi^\star
		},
	\]
	where ``\(\approx\)'' denotes equality up to vanishing terms and \(c',c''>0\) are some constants due to \cite[Lem. 7.5.7]{facchinei2003finite}.
	In this process, the information of superlinearity of \(\seq{d^k}\) is lost, and the vanishing of \(\varepsilon_k\) cannot be established.
	Instead, as is apparent from \eqref{eq:CLyD:LL=env} the choice of \(y^k\) as in \cref{state:CLyD:x+} guarantees that on the first trial step \(\LL\) coincides with a Bregman Moreau envelope, a function more tightly connected to the cost \(\varphi\).%

	\section{Conclusions}\label{sec:conclusions}%
		This work contributes a mirror inertial forward-reflected-back\-ward splitting algorithm (\BiFRB) and its linesearch enhancement, extending the forward-reflected-back\-ward method proposed in~\cite{malitsky2020forward} to the nonconvex and relative smooth setting.
We have shown that the proposed algorithms enjoy pleasant properties akin to other splitting methods in the same setting.
However, our methodology deviates from tradition through the \BiFRB-envelope, an envelope function defined on  a product space that takes inertial terms into account, which, to the best of our knowledge, is the first of its kind and thus could be instrumental for future research.
This approach also requires the inertial parameter to be negative, which coincides with a recent result~\cite{dragomir2022optimal} regarding the impossibility of accelerated non-Euclidean algorithms under relative smoothness.
Thus, it would be tempting to see whether an explicit example can be constructed to prove the sharpness of such restrictive assumption.
It is also worth applying our technique to other two-stage splitting methods, such as Tseng's method, to obtain similar extensions.

	\begin{appendix}
		\phantomsection
		\addcontentsline{toc}{section}{Appendix}%
		\section{Proof of \texorpdfstring{\cref{thm:SD}}{Theorem \ref*{thm:SD}}}\label{sec:SD:proof}%
			\subsection{Convex \texorpdfstring{\(\f\)}{f\_beta} case}\label{sec:fbconv}%
				\begin{lemma}\label{thm:fconv:general}%
	Suppose that \cref{ass:basic} holds and let \(\gamma>0\) and \(\beta\in\R\) be such that \(\f\coloneqq f-\frac\beta\gamma h\) is a convex function.
	Then, for every \(x,x^-\in C\) and \(\bar x\in\T(x,x^-)\)
	\begin{align}\label{eq:decrease:fconv}
		\left(\env+\D_{\f}\right)(\bar x,x)
	{}\leq{} &
		\left(\env+\D_{\f}\right)(x,x^-)
		{}-{}
		\D_{\h-2\f}(\bar x,x).
	\end{align}
	If \(\h-\f\) too is convex, then \(\env+\D_{\f}\) has the same infimum of \(\cost\), and is level bounded iff so is \(\cost\).%
	\begin{proof}
		All the claimed inequalities follows from \eqref{eq:descent} together with the fact that \(\D_{\f}\geq0\), and that \(\D_{\h-\f}\geq0\) too when \(\h-\f\) is convex.
		When both \(\f\) and \(\h-\f\) are convex, \cref{thm:env:eq} implies that
		\begin{equation}\label{eq:fconv:geq}
			\env(y,y^-)+\D_{\f}(y,y^-)\geq\varphi(\bar y)\geq\inf\cost
		\quad
			\forall y,y^-\in C,\ \bar y\in\T(y,y^-),
		\end{equation}
		proving that \(\inf(\env+\D_{\f})\geq\inf\cost\).
		The converse inequality follows from \cref{thm:env:leq} by observing that \((\env+\D_{\f})(x,x)=\env(x,x)\leq\varphi(x)\).

		To conclude, suppose that \(\env+\D_{\f}\) is not level bounded, and consider an unbounded sequence \(\seq{x_k,x_k^-}\) such that \(\env(x_k,x_k^-)+\D_{\f}(x_k,x_k^-)\leq\ell\), for some \(\ell\in\R\).
		Then, it follows from \eqref{eq:fconv:geq} that \(\varphi(\bar x_k)\leq\ell\), where \(\bar x_k\in\T(x_k,x_k^-)\).
		From local boundedness of \(\T\) (\cref{thm:T:osc}) it follows that \(\seq{\bar x_k}\) too is unbounded, showing that \(\varphi\) is not level bounded either.
		The converse holds by observing that \((\env+\D_{\f})(x,x)=\env(x,x)\leq\cost(x)\), cf. \cref{thm:env:leq}.
	\end{proof}
\end{lemma}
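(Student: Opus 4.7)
The plan is to anchor everything on the basic descent identity \eqref{eq:descent}, which already packages the key content:
\[
	\env(\bar x,x)
	{}\leq{}
	\env(x,x^-)-\D_{\h-\f}(\bar x,x)-\D_{\f}(\bar x,x^-)+\D_{\f}(x,x^-).
\]
Adding \(\D_{\f}(\bar x,x)\) to both sides transforms this into an inequality about the claimed potential \(\env+\D_{\f}\): its value at \((\bar x,x)\) is bounded above by the value at \((x,x^-)\) plus the error term
\(
	-\D_{\h-\f}(\bar x,x)-\D_{\f}(\bar x,x^-)+\D_{\f}(\bar x,x).
\)
To match the targeted residual \(-\D_{\h-2\f}(\bar x,x)\), I will exploit linearity of the Bregman distance in its generating function, namely
\(\D_{\h-2\f}(\bar x,x)=\D_{\h-\f}(\bar x,x)-\D_{\f}(\bar x,x)\).
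The remaining gap between the two sides is precisely \(\D_{\f}(\bar x,x^-)\), which is nonnegative by convexity of \(\f\). This finishes \eqref{eq:decrease:fconv}.

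For the second part, I would pin down both inequalities defining the infimum separately. The lower bound \(\inf(\env+\D_{\f})\geq\inf\cost\) follows by rewriting \(\env\) via \cref{thm:env:eq}: for any \((y,y^-)\in C\times C\) and \(\bar y\in\T(y,y^-)\),
\[
	(\env+\D_{\f})(y,y^-)
	{}={}
	\varphi(\bar y)+\D_{\h-\f}(\bar y,y)+\D_{\f}(\bar y,y^-)
	{}\geq{}
	\varphi(\bar y)
	{}\geq{}
	\inf\cost,
\]
where the inequality uses the standing convexity of \(\f\) together with the additional hypothesis that \(\h-\f\) be convex. The reverse bound \(\inf(\env+\D_{\f})\leq\inf\cost\) is immediate from \cref{thm:env:leq} and \(\D_{\f}(x,x)=0\), yielding the pointwise estimate \((\env+\D_{\f})(x,x)\leq\cost(x)\).

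For level boundedness, the same two estimates give both implications. If \(\cost\) is level bounded, then an unbounded sequence \(\seq{(x_k,x_k^-)}\) along which \(\env+\D_{\f}\) stays bounded would, via the displayed inequality above, force \(\varphi(\bar x_k)\) to stay bounded; but by local boundedness of \(\T\) asserted in \cref{thm:T:osc}, unboundedness of \(\seq{(x_k,x_k^-)}\) propagates to \(\seq{\bar x_k}\), contradicting level boundedness of \(\cost\). The converse direction uses \((\env+\D_{\f})(x,x)\leq\cost(x)\) on the diagonal to transfer unbounded sublevel sets from \(\env+\D_{\f}\) to \(\cost\).

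The only step with any subtlety is the algebraic identity \(\D_{\h-2\f}=\D_{\h-\f}-\D_{\f}\), which is why the notational abuse allowing Bregman expressions for nonconvex generators (flagged earlier in \cref{sec:toolbox}) is essential here. Everything else reduces to nonnegativity of a Bregman distance under convexity, combined with the basic properties of \(\env\) and \(\T\) already established.
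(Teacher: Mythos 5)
Your proposal is correct and follows essentially the same route as the paper's proof: both start from \eqref{eq:descent}, discard the nonnegative term \(\D_{\f}(\bar x,x^-)\) via convexity of \(\f\) (together with the linearity \(\D_{\h-2\f}=\D_{\h-\f}-\D_{\f}\)) to obtain \eqref{eq:decrease:fconv}, and then use \cref{thm:env:eq,thm:env:leq} to match the infimum and transfer level boundedness through the local boundedness of \(\T\). The only difference is that you spell out the Bregman-linearity step explicitly, which the paper leaves implicit in the phrase ``follows from \eqref{eq:descent} together with \(\D_{\f}\geq0\).''
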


In the setting of \cref{thm:SD:fconv}, inequality \eqref{eq:decrease:fconv} can equivalently be written in terms of \(\LL\) as
\begin{align*}
	\LL(\bar x,x)
{}\leq{} &
	\LL(x,x^-)
	{}-{}
	\D_{\h-2\f-\frac c\gamma h}(\bar x,x)
	{}-{}
	\tfrac{c}{2\gamma}\D(\bar x,x)
	{}-{}
	\tfrac{c}{2\gamma}\D(x,x^-)
\\
{}\leq{} &
	\LL(x,x^-)
	{}-{}
	\tfrac{c}{2\gamma}\D(\bar x,x)
	{}-{}
	\tfrac{c}{2\gamma}\D(x,x^-),
\end{align*}
where the second inequality owes to the fact that \(\D_{\h-2\f-\frac c\gamma h}\geq0\), since \(\h-2\f-\frac c\gamma h\) is convex, having
\[
	\h-2\f-\tfrac c\gamma h
{}={}
	\tfrac{1+2\beta-c}{\gamma}h-3f
{}={}
	\overbracket*{\tfrac{1+2\beta+3\alpha p_{-f,h}-c}{\gamma}}^{=0}h+2(-f-\sigma_{-f,h}h),
\]
the coefficient of \(h\) being null by definition of \(c\), and \(-f-\sigma_{-f,h}h\) being convex by definition of the relative weak convexity modulus \(\sigma_{-f,h}\), cf. \cref{def:relweak}.
This proves \eqref{eq:SD}; inequality \eqref{eq:L:geq} follows similarly by observing that
\[
	0
{}\leq{}
	\D_{\h-2\f-\frac c\gamma h}
{}={}
	\D_{\h-\f}-\D_{\f}-\tfrac c\gamma\D
{}\leq{}
	\D_{\h-\f}-\tfrac c\gamma\D,
\]
so that
\[
	\LL(x,x^-)
{}\overrel{\ref{thm:env:eq}}{}
	\varphi(\bar x)
	{}+{}
	\overbracket*{
		\D_{\h-\f}
		\vphantom{x^-}
	}^{\geq\frac c\gamma\D}(\bar x,x)
	{}+{}
	\overbracket*{
		\D_{\f}(\bar x,x^-)
	}^{\geq0}
	{}+{}
	\tfrac{c}{2\gamma}\D(x,x^-).
\]
In turn, \cref{thm:SD:inf} follows from the same arguments as in the proof of \cref{thm:fconv:general}.

			\subsection{Lipschitz differentiable \texorpdfstring{\(\f\)}{f\_beta} case}\label{sec:fbsmooth}%
				\begin{lemma}\label{thm:fsmooth:general}%
	Additionally to \cref{ass:basic}, suppose that \(\f\) is \(L_{\f}\)-Lipschitz differentiable for some \(L_{\f}\geq0\).
	Then, for every \(x,x^-\in C\) and \(\bar x\in\T(x,x^-)\)
	\begin{equation}\label{eq:decrease:fsmooth}
		\left(\env+\D_{L_{\f}\j}\right)(\bar x,x)
	{}\leq{}
		\left(\env+\D_{L_{\f}\j}\right)(x,x^-)
		{}-{}
		\D_{\h-2L_{\f}\j}(\bar x,x).
	\end{equation}
	If \(\h-L_{\f}\j\) is a convex function, then \(\env+\D_{L_{\f}\j}\) has the same infimum of \(\cost\), and is level bounded iff so is \(\cost\).
	\begin{proof}
		By means of the three-point identity, that is, by using \eqref{eq:M-innprod} in place of \eqref{eq:M-D}, inequality \eqref{eq:descent} can equivalently be written as
		\begin{align*}
			\env(\bar x,x)
		{}\leq{}
			\cost(\bar x)
		{}={} &
			\env(x,x^-)
			{}-{}
			\D_{\h}(\bar x,x)
			{}-{}
			\innprod*{\bar x-x}{\nabla\f(x)-\nabla\f(x^-)}
		\shortintertext{%
			which by using Young's inequality on the inner product and \(L_{\f}\)-Lipschitz differentiability yields
		}
		{}\leq{} &
			\env(x,x^-)
			{}-{}
			\D_{\h}(\bar x,x)
			{}+{}
			\tfrac{L_{\f}}{2}\|\bar x-x\|^2
			{}+{}
			\tfrac{L_{\f}}{2}\|x-x^-\|^2.
		\end{align*}
		Rearranging and using the fact that \(\D_{\j}(x,y)=\frac12\|x-y\|^2\) yields the claimed inequality.
		
		Under convexity of \(\h-L_{\f}\j\), we may draw the same conclusions as in the proof of \cref{thm:fconv:general} by observing that the same application of Young's inequality above, combined with the expression \eqref{eq:M-innprod} of \(\M\), gives
		\[
			\env(y,y^-)
		{}={}
			\M(\bar y;y,y^-)
		{}\geq{}
			\varphi(\bar y)
			{}+{}
			\D_{\h}(\bar y,y)
			{}-{}
			\tfrac{L_{\f}}{2}\|\bar y-y\|^2
			{}-{}
			\tfrac{L_{\f}}{2}\|y-y^-\|^2,
		\]
		holding for any \(y,y^-\in C\) and \(\bar y\in\T(y,y^-)\).
	\end{proof}
\end{lemma}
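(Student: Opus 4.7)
The plan is to mirror the proof of \cref{thm:fconv:general}, but with a crucial substitution: since $\f$ is no longer assumed convex, nonnegativity of $\D_{\f}$ is lost, so I cannot start from the Bregman decomposition \eqref{eq:M-D}. Instead, I would retrace the chain \eqref{eq:descent} through the inner-product form \eqref{eq:M-innprod} of the model, yielding
\[
	\env(\bar x,x) \leq \varphi(\bar x) = \env(x,x^-) - \D_{\h}(\bar x,x) - \innprod*{\bar x-x}{\nabla\f(x)-\nabla\f(x^-)}
\]
by \cref{thm:env:leq,thm:env:eq}. The cross-term, absent in the convex case, must now be tamed using $L_{\f}$-Lipschitz continuity of $\nabla\f$.

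The key manoeuvre is to bound the inner product by Cauchy-Schwarz and Lipschitz continuity, giving $\|\bar x-x\|\,L_{\f}\,\|x-x^-\|$, and then to split symmetrically via Young's inequality into $(L_{\f}/2)\|\bar x-x\|^2 + (L_{\f}/2)\|x-x^-\|^2 = \D_{L_{\f}\j}(\bar x,x) + \D_{L_{\f}\j}(x,x^-)$. Substituting this bound and using the linearity identity $\D_{\h-2L_{\f}\j} = \D_{\h} - 2\D_{L_{\f}\j}$, a rearrangement that shifts one copy of $\D_{L_{\f}\j}(\bar x,x)$ across to build the merit function on both sides yields exactly \eqref{eq:decrease:fsmooth}.

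For the second claim, I would apply Young's inequality with the opposite sign to lower-bound the inner product, producing $\env(y,y^-) \geq \varphi(\bar y) + \D_{\h-L_{\f}\j}(\bar y,y) - \D_{L_{\f}\j}(y,y^-)$ for any $\bar y\in\T(y,y^-)$. Convexity of $\h - L_{\f}\j$ then forces $\D_{\h-L_{\f}\j} \geq 0$, whence $(\env+\D_{L_{\f}\j})(y,y^-) \geq \varphi(\bar y) \geq \inf\cost$. The reverse inequality on the infimum and the equivalence of level-boundedness follow by the same arguments used in the second half of \cref{thm:fconv:general}: the tangency $(\env+\D_{L_{\f}\j})(x,x) = \env(x,x) \leq \varphi(x)$ from \cref{thm:env:leq} and local boundedness of $\T$ from \cref{thm:T:osc}.

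The main subtlety lies in the asymmetry between the two parts: the descent step uses Young's inequality twice (one copy of $\D_{L_{\f}\j}$ is absorbed into the merit function, one quantifies progress), which accounts for the factor $2$ appearing in $\D_{\h-2L_{\f}\j}$, whereas the lower bound is a one-sided estimate requiring only a single application, hence the weaker hypothesis that $\h-L_{\f}\j$ is convex. Apart from keeping track of this factor of $2$ and flipping the Young split between the two parts, the proof is structurally identical to the convex case.
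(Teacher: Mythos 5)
Your proposal is correct and essentially reproduces the paper's own argument: it starts from the inner-product form \eqref{eq:M-innprod} of the model, applies Young's inequality plus Lipschitz continuity to the cross term $\innprod{\bar x-x}{\nabla\f(x)-\nabla\f(x^-)}$, rearranges into the merit form to obtain \eqref{eq:decrease:fsmooth}, and then reverses the sign in the Young split to get the lower bound under convexity of $\h-L_{\f}\j$, closing with the same tangency and local-boundedness arguments as in \cref{thm:fconv:general}. One small imprecision worth fixing: you say the descent step "uses Young's inequality twice" — it is applied once, producing two copies of $\D_{L_{\f}\j}$ at different arguments; the factor $2$ in $\D_{\h-2L_{\f}\j}$ instead comes from the one copy at $(\bar x,x)$ delivered by Young's plus the one added to both sides to build the merit function.
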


We will pattern the arguments in the previous section, and observe that inequality \eqref{eq:decrease:fsmooth} can equivalently be written in terms of \(\LL\) as
\[
	\LL(\bar x,x)
{}\leq{}
	\LL(x,x^-)
	{}-{}
	\overbracket*{
		\D_{\h-2L_{\f}\j-\frac c\gamma h}
	}^{\geq0}(\bar x,x)
	{}-{}
	\tfrac{c}{2\gamma}\D(\bar x,x)
	{}-{}
	\tfrac{c}{2\gamma}\D(x,x^-).
\]
Once again, the fact that \(\D_{\h-2L_{\f}\j-\frac c\gamma h}\geq0\) owes to convexity of \(\h-2L{\f}\j-\frac c\gamma h\), having
{%
\ifsiam
	\mathtight[0.8]%
\fi
	\begin{align*}
		\h-2L_{\f}\j-\tfrac c\gamma h
	{}={} &
		\tfrac{1-c}{\gamma}h
		{}-{}
		f
		{}-{}
		2L_{\f}\j
	\\
	{}={} &
		\def\myvphantom{\vphantom{\tfrac{1+\gamma\sigma_{-f,h}-c}{\gamma}}}
		\overbracket[0.5pt]{
			\myvphantom
			\tfrac{1+\gamma\sigma_{-f,h}-c}{\gamma}
		}^{=\nicefrac{2L_{\f}}{\sigma_h}\geq0}
		(\overbracket*{
			\myvphantom
			h-\sigma_h\j
		}^{\text{convex}})
		{}+{}
		(\overbracket*{
			\myvphantom
			-f-\sigma_{-f,h}h
		}^{\text{convex}})
		{}+{}
		\bigl(\overbracket*{
			\myvphantom
			\tfrac{1+\gamma\sigma_{-f,h}-c}{\gamma}\sigma_h
			{}-{}
			2L_{\f}
		}^{=0}
		\bigr)\j
	\end{align*}
}%
altogether proving \eqref{eq:SD}.
Similarly, inequality \eqref{eq:L:geq} follows by observing that
\(
	0
{}\leq{}
	\D_{\h-2\f-\frac c\gamma h}
{}={}
	\D_{\h-\f}-\D_{\f}-\frac c\gamma\D
{}\leq{}
	\D_{\h-\f}-\frac c\gamma\D
\),
so that
\[
	\LL(x,x^-)
{}\overrel{\ref{thm:env:eq}}{}
	\varphi(\bar x)
	{}+{}
	\overbracket*{
		\D_{\h-\f}
		\vphantom{x^-}
	}^{\geq\frac c\gamma\D}(\bar x,x)
	{}+{}
	\overbracket*{
		\D_{\f}(\bar x,x^-)
	}^{\geq0}
	{}+{}
	\tfrac{c}{2\gamma}\D(x,x^-).
\]
The assertion of \cref{thm:SD:inf} once again follows from the same arguments as in the proof of \cref{thm:fsmooth:general}.

	\end{appendix}

	\ifsiam
		\bibliographystyle{siamplain}
	\else
		\phantomsection
		\addcontentsline{toc}{section}{References}%
		\bibliographystyle{plain}
	\fi
	\bibliography{TeX/Bibliography_abbr.bib}

\begin{thebibliography}{10}

\bibitem{ahookhosh2021bregman}
M.~Ahookhosh, A.~Themelis, and P.~Patrinos.
\newblock A {B}regman forward-backward linesearch algorithm for nonconvex
  composite optimization: Superlinear convergence to nonisolated local minima.
\newblock {\em SIAM Journal on Optimization}, 31(1):653--685, 2021.

\bibitem{attouch2010proximal}
H.~Attouch, J.~Bolte, P.~Redont, and A.~Soubeyran.
\newblock Proximal alternating minimization and projection methods for
  nonconvex problems: An approach based on the {Kurdyka-\L ojasiewicz}
  inequality.
\newblock {\em Mathematics of Operations Research}, 35(2):438--457, 2010.

\bibitem{attouch2013convergence}
H.~Attouch, J.~Bolte, and B.F. Svaiter.
\newblock Convergence of descent methods for semi-algebraic and tame problems:
  proximal algorithms, forward--backward splitting, and regularized
  {G}auss--{S}eidel methods.
\newblock {\em Mathematical Programming}, 137(1):91--129, 2013.

\bibitem{aze1995uniformly}
D.~Az\'e and J.~Penot.
\newblock Uniformly convex and uniformly smooth convex functions.
\newblock {\em Annales de la Facult\'e des sciences de Toulouse :
  Math\'ematiques}, Ser. 6, 4(4):705--730, 1995.

\bibitem{bauschke2017descent}
H.H. Bauschke, J.~Bolte, and M.~Teboulle.
\newblock A descent lemma beyond {L}ipschitz gradient continuity: First-order
  methods revisited and applications.
\newblock {\em Mathematics of Operations Research}, 42(2):330--348, 2017.

\bibitem{bauschke2003iterating}
H.H. Bauschke and P.L. Combettes.
\newblock Iterating {Bregman} retractions.
\newblock {\em SIAM Journal on Optimization}, 13(4):1159--1173, 2003.

\bibitem{bauschke2017convex}
H.H. Bauschke and P.L. Combettes.
\newblock {\em {Convex Analysis and Monotone Operator Theory in Hilbert
  spaces}}.
\newblock CMS Books in Mathematics. Springer, 2017.

\bibitem{beck2003mirror}
A.~Beck and M.~Teboulle.
\newblock Mirror descent and nonlinear projected subgradient methods for convex
  optimization.
\newblock {\em Operations Research Letters}, 31(3):167--175, 2003.

\bibitem{bertsekas2016nonlinear}
D.P. Bertsekas.
\newblock {\em Nonlinear Programming}.
\newblock Athena Scientific, 2016.

\bibitem{bolte2018first}
J.~Bolte, S.~Sabach, M.~Teboulle, and Y.~Vaisbourd.
\newblock First order methods beyond convexity and {L}ipschitz gradient
  continuity with applications to quadratic inverse problems.
\newblock {\em SIAM Journal on Optimization}, 28(3):2131--2151, 2018.

\bibitem{bot2020proximal}
R.I. Bo{\c{t}} and D.~Nguyen.
\newblock The proximal alternating direction method of multipliers in the
  nonconvex setting: convergence analysis and rates.
\newblock {\em Mathematics of Operations Research}, 45(2):682--712, 2020.

\bibitem{chen1993convergence}
G.~Chen and M.~Teboulle.
\newblock Convergence analysis of a proximal-like minimization algorithm using
  {B}regman functions.
\newblock {\em SIAM Journal on Optimization}, 3(3):538--543, 1993.

\bibitem{dragomir2021quartic}
R.~Dragomir, A.~d’Aspremont, and J.~Bolte.
\newblock Quartic first-order methods for low-rank minimization.
\newblock {\em Journal of Optimization Theory and Applications},
  189(2):341--363, 2021.

\bibitem{dragomir2022optimal}
R.~Dragomir, A.B. Taylor, A.~d’Aspremont, and J.~Bolte.
\newblock Optimal complexity and certification of {B}regman first-order
  methods.
\newblock {\em Mathematical Programming}, 194(1):41--83, 2022.

\bibitem{facchinei2003finite}
F.~Facchinei and J.~Pang.
\newblock {\em {Finite-dimensional Variational Inequalities and Complementarity
  Problems}}, volume~II.
\newblock Springer, 2003.

\bibitem{hanzely2021accelerated}
F.~Hanzely, P.~Richtarik, and L.~Xiao.
\newblock Accelerated {B}regman proximal gradient methods for relatively smooth
  convex optimization.
\newblock {\em Computational Optimization and Applications}, 79(2):405--440,
  2021.

\bibitem{izmailov2014newton}
A.F. Izmailov and M.V. Solodov.
\newblock {\em {Newton-type Methods for Optimization and Variational
  Problems}}.
\newblock Springer, 2014.

\bibitem{kan2012moreau}
C.~Kan and W.~Song.
\newblock The {M}oreau envelope function and proximal mapping in the sense of
  the {B}regman distance.
\newblock {\em Nonlinear Analysis: Theory, Methods \& Applications}, 75(3):1385
  -- 1399, 2012.

\bibitem{li2017peaceman}
G.~Li, T.~Liu, and T.K. Pong.
\newblock Peaceman--{R}achford splitting for a class of nonconvex optimization
  problems.
\newblock {\em Computational Optimization and Applications}, 68(2):407--436,
  Nov 2017.

\bibitem{li2015global}
G.~Li and T.K. Pong.
\newblock Global convergence of splitting methods for nonconvex composite
  optimization.
\newblock {\em SIAM Journal on Optimization}, 25(4):2434--2460, 2015.

\bibitem{li2016douglas}
G.~Li and T.K. Pong.
\newblock Douglas-{R}achford splitting for nonconvex optimization with
  application to nonconvex feasibility problems.
\newblock {\em Mathematical Programming}, 159(1):371--401, Sep 2016.

\bibitem{liu2019envelope}
Y.~Liu and W.~Yin.
\newblock An envelope for {D}avis--{Y}in splitting and strict saddle-point
  avoidance.
\newblock {\em Journal of Optimization Theory and Applications},
  181(2):567--587, 2019.

\bibitem{lu2018relatively}
H.~Lu, R.M. Freund, and Y.~Nesterov.
\newblock Relatively smooth convex optimization by first-order methods, and
  applications.
\newblock {\em SIAM Journal on Optimization}, 28(1):333--354, 2018.

\bibitem{mairal2015incremental}
J.~Mairal.
\newblock Incremental majorization-minimization optimization with application
  to large-scale machine learning.
\newblock {\em SIAM Journal on Optimization}, 25(2):829--855, 2015.

\bibitem{malitsky2020forward}
Y.~Malitsky and M.K. Tam.
\newblock A forward-backward splitting method for monotone inclusions without
  cocoercivity.
\newblock {\em SIAM Journal on Optimization}, 30(2):1451--1472, 2020.

\bibitem{maratos1978exact}
N.~Maratos.
\newblock {\em Exact penalty function algorithms for finite dimensional and
  control optimization problems}.
\newblock PhD thesis, Imperial College London (University of London), 1978.

\bibitem{mordukhovich2018variational}
B.~Mordukhovich.
\newblock {\em {Variational Analysis and Applications}}, volume~30.
\newblock Springer, 2018.

\bibitem{moreau1965proximite}
J.~Moreau.
\newblock Proximit{\'e} et dualit{\'e} dans un espace hilbertien.
\newblock {\em Bulletin de la Soci{\'e}t{\'e} math{\'e}matique de France},
  93:273--299, 1965.

\bibitem{nesterov2019implementable}
Y.~Nesterov.
\newblock Implementable tensor methods in unconstrained convex optimization.
\newblock {\em Mathematical Programming}, pages 1--27, 2019.

\bibitem{nesterov2018lectures}
Yurii Nesterov.
\newblock {\em Lectures on convex optimization}, volume 137.
\newblock Springer, 2018.

\bibitem{parikh2014proximal}
N.~Parikh and S.~Boyd.
\newblock Proximal algorithms.
\newblock {\em Foundations and Trends in Optimization}, 1(3):127--239, 2014.

\bibitem{reem2019re}
D.~Reem, S.~Reich, and A.~De~Pierro.
\newblock Re-examination of {B}regman functions and new properties of their
  divergences.
\newblock {\em Optimization}, 68(1):279--348, 2019.

\bibitem{rockafellar2011variational}
R.T. Rockafellar and R.J. Wets.
\newblock {\em {Variational Analysis}}, volume 317.
\newblock Springer, 2011.

\bibitem{stella2018newton}
L.~Stella, A.~Themelis, and P.~Patrinos.
\newblock {N}ewton-type alternating minimization algorithm for convex
  optimization.
\newblock {\em IEEE Transactions on Automatic Control}, 2018.

\bibitem{teboulle2018simplified}
M.~Teboulle.
\newblock A simplified view of first order methods for optimization.
\newblock {\em Mathematical Programming}, pages 1--30, 2018.

\bibitem{themelis2018proximal}
A.~Themelis.
\newblock {\em Proximal Algorithms for Structured Nonconvex Optimization}.
\newblock PhD thesis, KU Leuven, 12 2018.

\bibitem{themelis2020douglas}
A.~Themelis and P.~Patrinos.
\newblock {D}ouglas--{R}achford splitting and {ADMM} for nonconvex
  optimization: Tight convergence results.
\newblock {\em SIAM Journal on Optimization}, 30(1):149--181, 2020.

\bibitem{themelis2018forward}
A.~Themelis, L.~Stella, and P.~Patrinos.
\newblock Forward-backward envelope for the sum of two nonconvex functions:
  Further properties and nonmonotone linesearch algorithms.
\newblock {\em SIAM Journal on Optimization}, 28(3):2274--2303, 2018.

\bibitem{themelis2022douglas}
A.~Themelis, L.~Stella, and P.~Patrinos.
\newblock {D}ouglas-{R}achford splitting and {ADMM} for nonconvex optimization:
  Accelerated and {N}ewton-type algorithms.
\newblock {\em Computational Optimization and Applications}, 82:395--440, 2022.

\bibitem{wang2022bregman}
X.~Wang and Z.~Wang.
\newblock A {B}regman inertial forward-reflected-backward method for nonconvex
  minimization.
\newblock {\em arXiv:2207.01170}, 2022.

\bibitem{wang2022exact}
X.~Wang and Z.~Wang.
\newblock The exact modulus of the generalized concave
  {K}urdyka-{{\L}}ojasiewicz property.
\newblock {\em Mathematics of Operations Research}, 47(4):2765--2783, 2022.

\bibitem{wang2022malitsky}
X.~Wang and Z.~Wang.
\newblock Malitsky-{T}am forward-reflected-backward splitting method for
  nonconvex minimization problems.
\newblock {\em Computational Optimization and Applications}, 82(2):441--463,
  2022.

\end{thebibliography}

\end{document}